\newtheorem{theorem}{Theorem}[section]
\newtheorem{lemma}[theorem]{Lemma}
\newtheorem{proposition}[theorem]{Proposition}
\newtheorem{corollary}[theorem]{Corollary}
\theoremstyle{definition}
\newtheorem{definition}[theorem]{Definition}
\newtheorem{notation}[theorem]{Notation}
\numberwithin{equation}{section}
\newcommand{\bR}{\mathbb{R}}
\newcommand{\bQ}{\mathbb{Q}}
\newcommand{\bZ}{\mathbb{Z}}
\newcommand{\bN}{\mathbb{N}}
\newcommand{\bC}{\mathbb{C}}
\newcommand{\cO}{\mathcal{O}}
\newcommand{\cP}{\mathcal{P}}
\newcommand{\cS}{\mathcal{S}}
\newcommand{\cA}{\mathcal{A}}
\newcommand{\fp}{\mathfrak{p}}
\newcommand{\fa}{\mathfrak{a}}
\newcommand{\fd}{\mathfrak{d}}
\newcommand{\fD}{\mathfrak{D}}
\newcommand{\bv}{\mathbf{v}}
\newcommand{\eps}{\varepsilon}
\newcommand{\tr}{\text{tr}}
\begin{document}

\title[Geometry]{A geometric study of circle packings and ideal class groups}

\thanks{The author is grateful to Elena Fuchs and Katherine Stange for their guidance and support, to Katherine Stange and Xin Zhang for verifying the switch to $\text{PGL}$, to Alex Kontorovich for helping with connections to Kleinian packings/bugs, and to Robert Hines for many helpful conversations.}

\author{Daniel E. Martin}
\address{University of California, Davis, CA, United States}
\email{dmartin@math.ucdavis.edu}

\subjclass[2010]{Primary: 52C26, 11R11, 30F40, 11R29. Secondary: 20G30, 20F65, 11E39.}

\keywords{imaginary quadratic field, circle packing, Bianchi group, class group}

\date{\today}

\begin{abstract}A family of fractal arrangements of circles is introduced for each imaginary quadratic field $K$. Collectively, these arrangements contain (up to an affine transformation) every set of circles in $\widehat{\bC}$ with integral curvatures and Zariski dense symmetry group. When that set is a circle packing, we show how the ambient structure of our arrangement gives a geometric criterion for satisfying the almost local-global principle. Connections to the class group of $K$ are also explored. Among them is a geometric property that guarantees certain ideal classes are group generators.\end{abstract}

\maketitle

\section{Introduction}\label{sec:1}

Let $K$ be an imaginary quadratic field with ring of integers $\cO$ and discriminant $\Delta$. To each $K$ we associate a family of arrangements, meaning a set of oriented circles in $\widehat{\bC}$, the extended complex plane. We denote a member of this family $\cS_D$, where $D$ is an integer whose allowable values depend on $K$. Figure \ref{fig:1} shows $\cS_{1}$ and $\cS_{24}$ for $\bQ(i\sqrt{19})$, to be discussed in Subsection \ref{ss:dich}.

\begin{figure}[ht]
    \vspace{-0.2cm}
    \centering
    \includegraphics[width=0.48\textwidth]{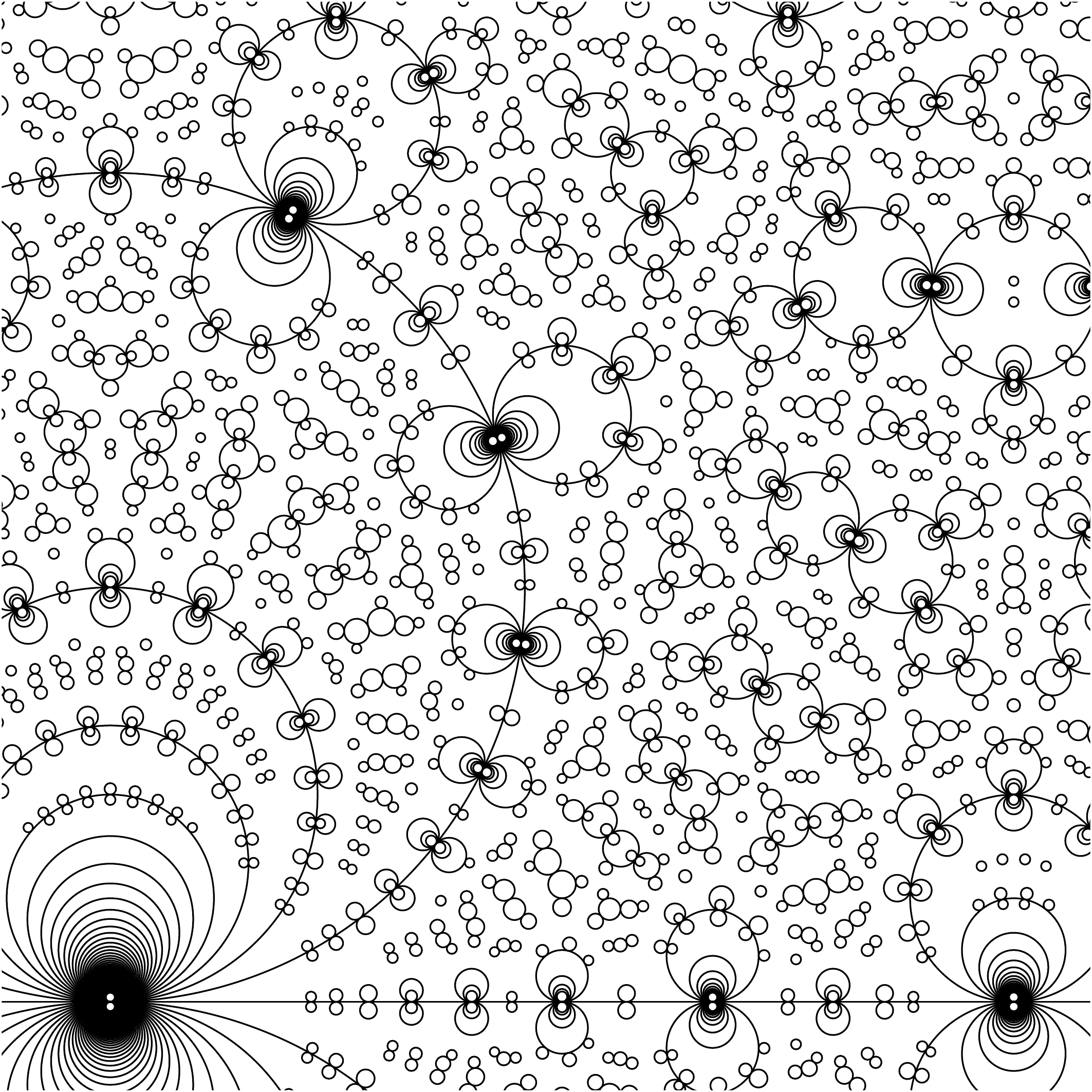}\hspace{0.04\textwidth}\includegraphics[width=0.48\textwidth]{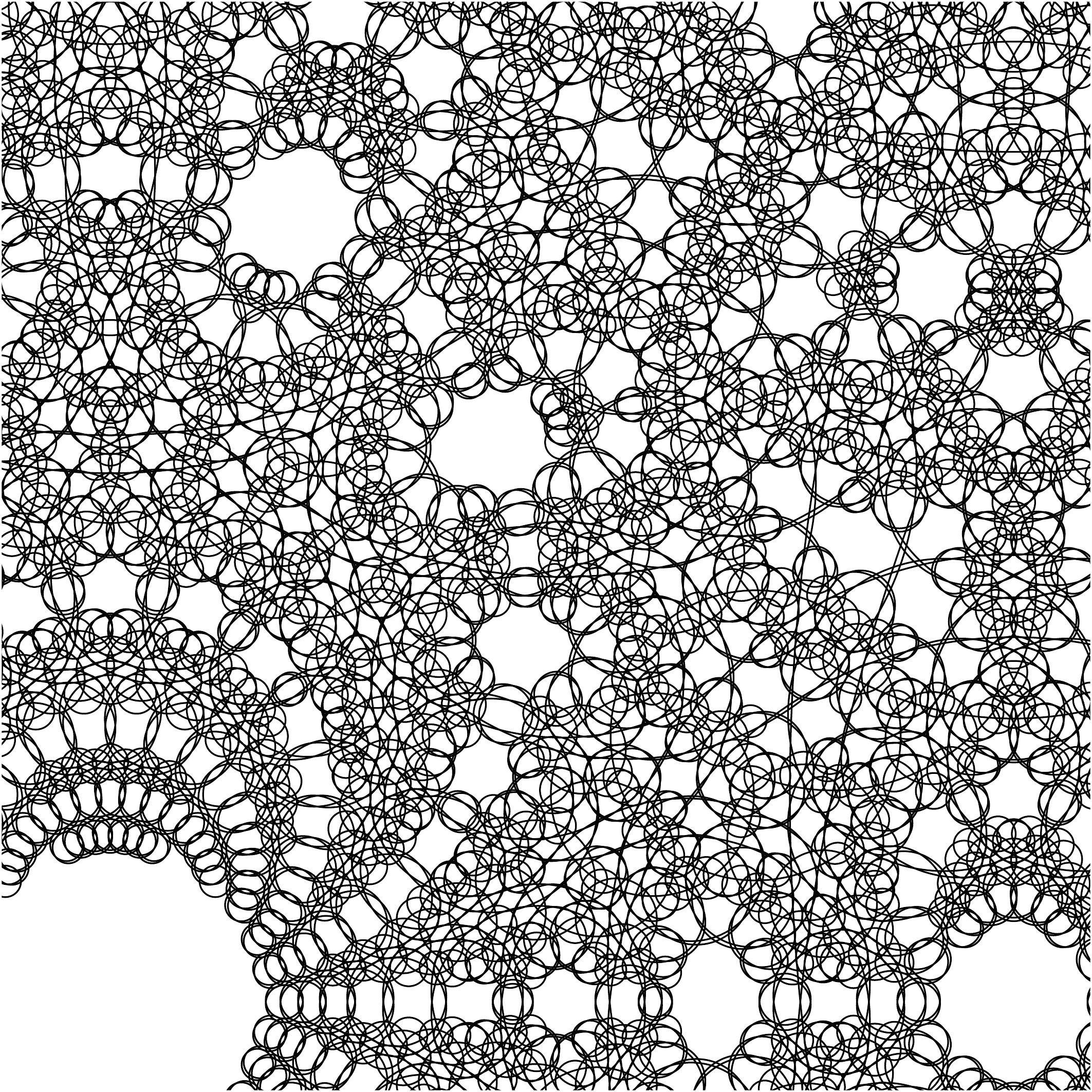}
    \captionsetup{width=1\textwidth}\caption{Piece of $\cS_1$ (left) and $\cS_{24}$ (right) for $\bQ(i\sqrt{19})$; curvatures up to $500/\sqrt{D}$.}
    \label{fig:1}
\end{figure}

Several cases have appeared in print before. Most notably, $\cS_{1}$ always contains (and \textit{equals} in Figure \ref{fig:1} as $19$ is prime) Stange's Schmidt arrangement \cite{stange}, which motivated this paper. Also, $\cS_{1}$ for $\bQ(i)$ is in \cite{graham}; $\cS_{2}$ for $\bQ(i\sqrt{2})$ is in \cite{guettler}; $\cS_{D}$ for $\bQ(i\sqrt{D})$ contains circle packings in \cite{boyd}, \cite{manna}, and \cite{nakamura} ($D=2$), \cite{soddy} ($D=3$), \cite{nakamura} ($D=6$), and \cite{baragar} (all $D\in\bN$); $\cS_{24}$ for $\bQ(i\sqrt{30})$ contains a circle packing in \cite{kapovich}; and $\cS_{D}$ for $D=(\Delta^2+14\Delta+1)/16$ when $4\nmid\Delta$ and $D=(\Delta^2+12\Delta)/16$ when $4\,|\,\Delta$ are Stange's ghost circles \cite{stange}.

Background is covered in Section \ref{sec:2}, then $\cS_D$ is introduced formally in Definition \ref{def:SD}. The main result of Section \ref{sec:3} is Theorem \ref{thm:redef}, which splits our arrangements into two very different categories. Figure \ref{fig:1} shows one arrangement from each. Those in the same category as $\cS_1$ (first image) are ``orbits" under subsets of $\text{PGL}_2(K)$---a fact used throughout the rest of the paper to connect the geometry of $\cS_D$ to the number theory of $K$. All aforementioned arrangements from literature belong to this category except the packing in \cite{kapovich} and certain ghost circles \cite{stange}.

Section \ref{sec:4} begins with the result below. It shows the extent to which $\cS_D$ is universal, explaining why our paper is a common thread among many past works. \textit{Integral} means that all curvatures are integers up to a single scaling factor.

\begin{theorem}\label{thm:intro1}Any integral arrangement that is fixed by a Zariski dense subgroup of $\emph{PSL}_2(\bC)$ is contained in some $\cS_D$ after scaling, rotating, and translating.\end{theorem}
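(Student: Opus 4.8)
The plan is to pass to inversive coordinates, where an oriented circle becomes a spacelike vector $v$ on the quadric $\{Q=-1\}\subseteq\bR^{3,1}$, the group $\mathrm{PSL}_2(\bC)$ is identified with the identity component $\mathrm{SO}^+(Q)$ of isometries of the Lorentzian inversive form $Q$, and the signed curvature of a circle is its inversive product $\langle v,\be_\infty\rangle$ with the null vector $\be_\infty$ attached to the point $\infty$. Write $\cA$ for the given arrangement and $\Gamma\leq\mathrm{PSL}_2(\bC)$ for the Zariski dense group fixing it, and scale so that all curvatures lie in $\bZ$. Because $\mathrm{SO}^+(Q)$ acts irreducibly on $\bR^{3,1}$, the linear span of any single orbit $\Gamma\cdot v$ is a nonzero invariant subspace, hence all of $\bR^{3,1}$; thus the vectors of $\cA$ span, and I may fix four of them $C_1,\dots,C_4\in\cA$ forming a basis.

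The heart of the argument, and what I expect to be the main obstacle, is promoting integrality of the single curvature functional to rationality of the whole structure. Integrality says $\langle v,\be_\infty\rangle\in\bZ$ for every $v\in\cA$, and since $\cA=\Gamma\cA$ this yields $\langle v,g^{-1}\be_\infty\rangle=\langle gv,\be_\infty\rangle\in\bZ$ for all $g\in\Gamma$ and $v\in\cA$. Hence the set $\Lambda:=\{w\in\bR^{3,1}:\langle\cA,w\rangle\subseteq\bZ\}$, a full-rank lattice because $\cA$ spans, contains the entire orbit $\Gamma^{-1}\be_\infty$; and the invariance of $Q$ gives $g\Lambda=\Lambda$ for every $g\in\Gamma$, so after rescaling I may take $\Lambda=\bZ^4$ and conclude $\Gamma\subseteq\mathrm{SO}(Q)(\bZ)$. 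The orbit of $\infty$ under $\Gamma$ is Zariski dense in $\widehat\bC$ (its closure is a nonempty $\mathrm{PSL}_2(\bC)$-invariant subset), so the light cone $\{Q=0\}$ meets $\Lambda=\bZ^4$ in a Zariski dense set. An irreducible quadric passing through a Zariski dense set of rational points is cut out by a rational form, so $Q$ is a rational multiple of a form defined over $\bQ$; rescaling, $Q\in\bQ[x_0,\dots,x_3]$ has signature $(3,1)$.

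Next I would read off the field. The exceptional isomorphism identifying $\mathrm{SO}^+(Q)(\bR)$ with $\mathrm{PSL}_2(\bC)$ descends over $\bQ$: for a rational quaternary form $Q$ of signature $(3,1)$ one has $\mathrm{Spin}(Q)\cong\mathrm{Res}_{K/\bQ}\mathrm{SL}_1(B)$ for a quaternion algebra $B$ over the imaginary quadratic field $K=\bQ(\sqrt{\operatorname{disc}Q})$, and in the present (split) case this gives $\mathrm{PSO}(Q)\cong\mathrm{PGL}_2(K)$. Thus $\Gamma$ is carried into $\mathrm{PGL}_2(K)$ for a \emph{single} imaginary quadratic $K$ determined by $Q$; in particular every orbit point $g\cdot\infty$ is automatically $K$-rational, which removes any worry that the orbit might spread across several quadratic fields. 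I would choose the identification so that the rational null vector $\be_\infty$ maps to the standard cusp $\infty$, which is available up to the residual $\mathrm{PGL}_2(K)$-freedom.

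Finally I would normalize into $\cS_D$. With $\infty$ aligned, the only remaining discrepancy between $(Q,\Lambda)$ and the standard integral data is a choice of $\bC$-coordinate fixing $\infty$, i.e.\ an affine map realized by scaling, rotating, and translating; applying it brings $\Lambda$ into the standard $\cO$-lattice, and the integer $D$ is precisely the invariant of $\Lambda$ relative to that lattice (its index/discriminant data, whose allowable values depend on $K$). By Definition \ref{def:SD} the image of $\cA$ is then contained in $\cS_D$, and Theorem \ref{thm:redef}, which exhibits arrangements of this category as $\mathrm{PGL}_2(K)$-orbits, certifies the containment. A secondary difficulty here is ensuring the final normalization needs only an affine map rather than a general Möbius transformation; this hinges on $\be_\infty$ already being a rational null vector, so that $\infty$ need not be moved, and on Definition \ref{def:SD} supplying a value of $D$ for every normalized lattice that can arise.
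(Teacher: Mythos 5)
Your strategy is genuinely different from the paper's. The paper invokes the Kapovich--Kontorovich subarithmeticity theorem to conjugate a finite-index subgroup $\Gamma_0$ into some $\mathrm{PSL}_2(\cO)$, reads off $K$ from the traces, picks $M_0\in\Gamma_0$ with $\mathrm{Im}(\mathrm{tr}\,M_0^2)\neq 0$ and conjugates it by an explicit affine map into the shape $\left[\begin{smallmatrix}0 & -t\\ 1/t & \tau\end{smallmatrix}\right]$, and then does the heavy lifting with a concrete matrix $R_0$ sending $\bv_C$ to four consecutive curvatures of $M_0^nC$: inverting $R_0$ explicitly shows that integrality of all these curvatures forces the curvature-center and cocurvature of every $C$ into $\cO/i\sqrt{D}$ and $\sqrt{\Delta}\bZ/i\sqrt{D}$ with $D=|t\beta\sqrt{\Delta}|^2$. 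You instead reconstruct a rational structure directly: the dual lattice $\Lambda$ of the arrangement contains the $\Gamma$-orbit of $\be_\infty$, is $\Gamma$-invariant and full rank, the Zariski-dense set of integral null vectors forces $Q$ to be proportional to a rational form, and $K$ emerges as the discriminant field. That first two-thirds is sound (and the observation that isotropy of $Q$ over $\bQ$, witnessed by the rational null vectors, is what splits the Clifford algebra and yields $\mathrm{PGL}_2(K)$ rather than a quaternionic form is correct, though you should say it).

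The gap is the final normalization, which you dismiss as bookkeeping but which is actually the crux of the theorem --- it is the step the paper's entire $R_0$ computation exists to handle. Knowing that the circle vectors lie in \emph{some} $\Gamma$-invariant lattice inside \emph{some} rational structure on $\bR^{3,1}$ does not yet give containment in $\frac{1}{i\sqrt{D}}(\cO^2\times\sqrt{\Delta}\bZ^2)$ after an affine map. To carry your rational structure $V_1$ onto the standard one $V_2(D)$ by an isometry of $Q$ you need $Q|_{V_1}$ and $Q|_{V_2(D)}$ to be \emph{isometric over} $\bQ$, not merely of the same signature and discriminant; quaternary forms over $\bQ$ with the same discriminant and split Clifford invariant are only \emph{similar}, so you must use the freedom in the square class of $D$ (note $Q|_{V_2(D)}\cong\langle 1/D\rangle\mathrm{Nm}_{K/\bQ}\perp\mathbb{H}$, with the hyperbolic plane absorbing scalars) to realize the similarity factor, then enlarge $D$ by a square to swallow the commensurability index of the two lattices. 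None of this is in your writeup: the sentence ``the integer $D$ is precisely the invariant of $\Lambda$ relative to that lattice'' is an assertion where a proof is required, and as stated it is not even quite right, since $D$ is constrained modulo squares by the form before any index considerations enter. You would also need to check that the resulting isometry fixing the null vector $\be_\infty$ is induced by an affine map of $\bC$ (possibly composed with complex conjugation, which is harmless since conjugation preserves $\cS_D$). The citation of Theorem \ref{thm:redef} at the end is out of place --- it presupposes $H(D,\Delta)=1$, which need not hold, and the containment you want is a direct consequence of Definition \ref{def:SD} once the lattice statement is actually established. I would either supply the quadratic-form argument in full or adopt the paper's explicit route for this step.
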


Consequences of Theorem \ref{thm:intro1} include a restriction on possible intersection angles within certain subarrangements called $\textit{bugs}$ from \cite{kapovich} (Corollary \ref{cor:angles}), a criterion for determining superintegrality (Definition \ref{def:super}, Proposition \ref{prop:supint}), and a geometric connection between $\cS_D$ and the local-global principle for curvatures in an integral circle packing (Theorem \ref{thm:local} and Corollary \ref{cor:local}).

\begin{figure}[t]
    \centering
    \includegraphics[width=0.66\textwidth,clip,trim=0.2cm 0.2cm 0.2cm 0.2cm]{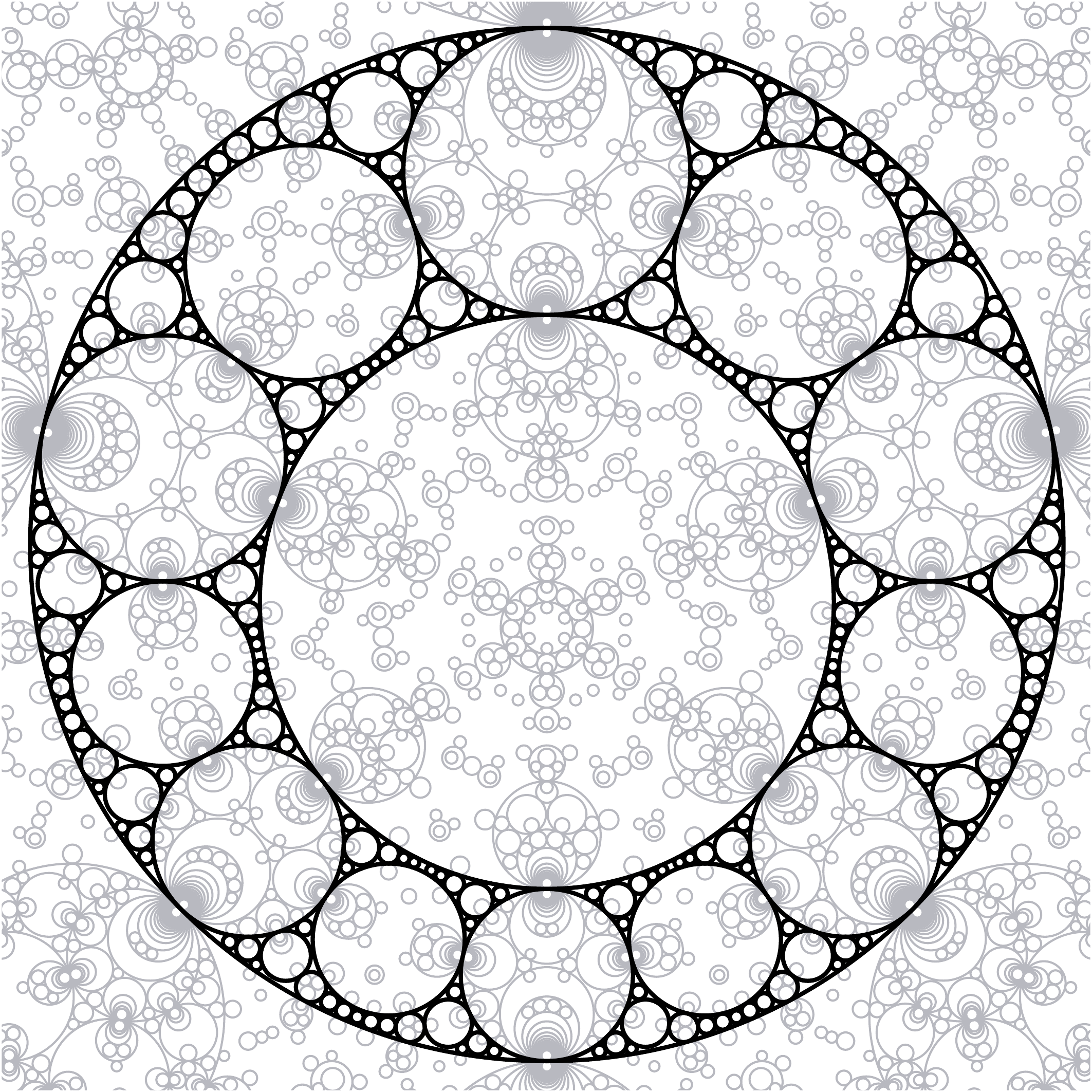}
    \captionsetup{width=0.66\textwidth}
    \caption{A new circle packing from $\cS_9$ for $\bQ(i\sqrt{39})$ that satisfies the local-global principal up to density 1.}\label{fig:2}
\end{figure}

Regarding the last result, the relative position of a circle packing in $\cS_D$ can give a sufficient condition for applying a theorem of Fuchs, Stange, and Zhang \cite{zhang}, which states that curvatures in certain packings have asymptotic density $1$ among integers that pass a set of local obstructions. A circle packing that satisfies our sufficient condition is displayed in Figure \ref{fig:2}. For any oriented circle in bold, at each of its intersection points in $\cS_9$ it is the largest exteriorly tangent circle that belongs to the packing. Such circles are called immediately tangent in \cite{stange2}.

\begin{corollary}Immediate tangency packings (Definition \ref{def:itp}) in $\cS_D$ satisfy the local-global principal up to density 1.\end{corollary}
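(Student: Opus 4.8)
The plan is to derive this as an application of Corollary \ref{cor:local}, so that the entire task reduces to checking that an immediate tangency packing satisfies the hypothesis of that result. I would begin by unwinding Definition \ref{def:itp}: for a packing $P \subseteq \cS_D$ and any circle $C \in P$, at every point where $C$ meets another circle of $\cS_D$, the circle of $P$ made tangent to $C$ there is the largest exteriorly tangent circle of $\cS_D$ available. This is precisely the relative-position condition in $\cS_D$ that Theorem \ref{thm:local} and Corollary \ref{cor:local} take as input, so the first and most direct step is to observe that immediate tangency packings match this hypothesis verbatim. The remaining content is to confirm that $P$ carries the arithmetic and group-theoretic structure the theorem of Fuchs, Stange, and Zhang \cite{zhang} requires, namely superintegrality together with a Zariski dense symmetry group.

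Next I would supply those two structural inputs. For Zariski density, I would use that in the orbit category of Theorem \ref{thm:redef} the arrangement $\cS_D$ is a union of orbits of a subgroup $G \le \mathrm{PGL}_2(K)$ that is Zariski dense in $\mathrm{PSL}_2(\bC)$, and that the immediate tangency selection rule is $G$-equivariant: M\"obius maps preserve tangency, orientation, and the relative-size ordering of circles sharing a point, so the largest exteriorly tangent circle is carried to the largest exteriorly tangent circle. Consequently the symmetry group of $P$ contains a Zariski dense subgroup. For superintegrality, I would invoke Proposition \ref{prop:supint}: since $P \subseteq \cS_D$ and $\cS_D$ is integral, the criterion there applies, and the greedy choice of the largest tangent circle introduces no denominators into the generating inversions, so $P$ is superintegral. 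With both inputs in hand, Corollary \ref{cor:local} applies to $P$ and yields the density-1 local-global principle.

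The main obstacle I anticipate is not the final invocation but the verification that Definition \ref{def:itp} genuinely produces a circle packing inside $\cS_D$ whose symmetry is as claimed: one must check that the greedily selected circles are pairwise non-overlapping, that the selection is consistently and canonically defined at every intersection point (so that ties, if present, are broken $G$-equivariantly), and hence that the resulting object is simultaneously a packing and invariant under a Zariski dense group. A secondary point requiring care is that the group-theoretic input to \cite{zhang} is only available in the orbit category of Theorem \ref{thm:redef}; since immediate tangency packings are defined relative to the tangency structure of $\cS_D$, I would confirm that the cases in which such packings arise indeed lie in that category, so that the orbit description underlying both superintegrality and Zariski density remains in force.
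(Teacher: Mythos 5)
There is a genuine gap here, and the framing itself is circular: the statement you are asked to prove \emph{is} Corollary \ref{cor:local}, so you cannot ``derive it as an application of Corollary \ref{cor:local}.'' What the paper actually does is deduce it from Theorem \ref{thm:local}, and the entire content of the proof is verifying that an immediate tangency packing $\cP$ satisfies that theorem's two hypotheses. Your proposal asserts that Definition \ref{def:itp} ``matches this hypothesis verbatim,'' but it does not: Theorem \ref{thm:local} requires (i) that every circle of $\cP$ be tangent to at least one other circle of $\cP$, and (ii) that $\cP\cap M\cP$ be empty or all of $\cP$ for every $M$ in some fixed congruence subgroup of $\mathrm{PSL}_2(\cO)$. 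Neither is immediate from the definition. For (i) the paper produces an explicit auxiliary circle $M\left[\begin{smallmatrix}1 & DL\sqrt{\Delta}\\ 0 & -1\end{smallmatrix}\right]\widehat{\bR}$ lying in $\cA$ and tangent to the given circle, and then uses the defining containment property of $\cP$ to force a tangency within $\cP$; for (ii) it first shows $\Gamma(L)$ fixes $\cA$ (where $L$ is the index of the sublattice in Definition \ref{def:itp}) and then runs a nested-interiors argument to get the empty-or-equal dichotomy. Your proposal addresses neither step. Moreover, your equivariance argument rests on the claim that M\"obius maps preserve ``the relative-size ordering of circles sharing a point,'' which is false --- curvature is not a M\"obius invariant; the paper avoids this by phrasing Definition \ref{def:itp} in terms of containment in the closure of an interior, which is invariant.

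A second, independent problem is that you route the argument through superintegrality, which plays no role in the local-global results of this paper. Theorem \ref{thm:stange} requires a finitely generated, Zariski dense, infinite-covolume subgroup containing a congruence subgroup of $\mathrm{PSL}_2(\bZ)$, together with a tangency to $\widehat{\bR}$; it does not require superintegrality, and Proposition \ref{prop:supint} in any case only applies when $D\mid\Delta$, a hypothesis absent from Definition \ref{def:itp}. These verifications (Zariski density of the stabilizer, passage to suitable finitely generated subgroups $\Gamma_k$, the determinant-to-norm ratio being preserved under coset representatives) are all carried out inside the proof of Theorem \ref{thm:local}, not in the corollary; the corollary's job is only to check the two geometric hypotheses above, and that is the part your proposal leaves unproved.
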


Section \ref{sec:5} proves connections between $\cS_D$ and the class group of $K$. If $\alpha_0\in K$ lies on a circle in $\cS_D$, then the same is true of each $\alpha\in K$ for which the fractional ideals $(\alpha,1)$ and $(\alpha_0,1)$ belong to the same ideal class. So it makes sense to talk about which ideal classes are \textit{covered} (Definition \ref{def:cover}) by $\cS_D$ and which are not. Those which are covered can often be distinguished by the geometry of $\cS_D$. In Figure \ref{fig:3}, for example, circles intersect non-tangentially at some $\alpha\in\bQ(i\sqrt{39})$ if and only if the ideal class of $(\alpha,1)$ generates the class group (consequence of Proposition \ref{thm:angles}).

\begin{figure}[t]
    \centering
    \includegraphics[height=0.64\textwidth,clip,trim=0cm 1.9cm 0cm 4.9cm]{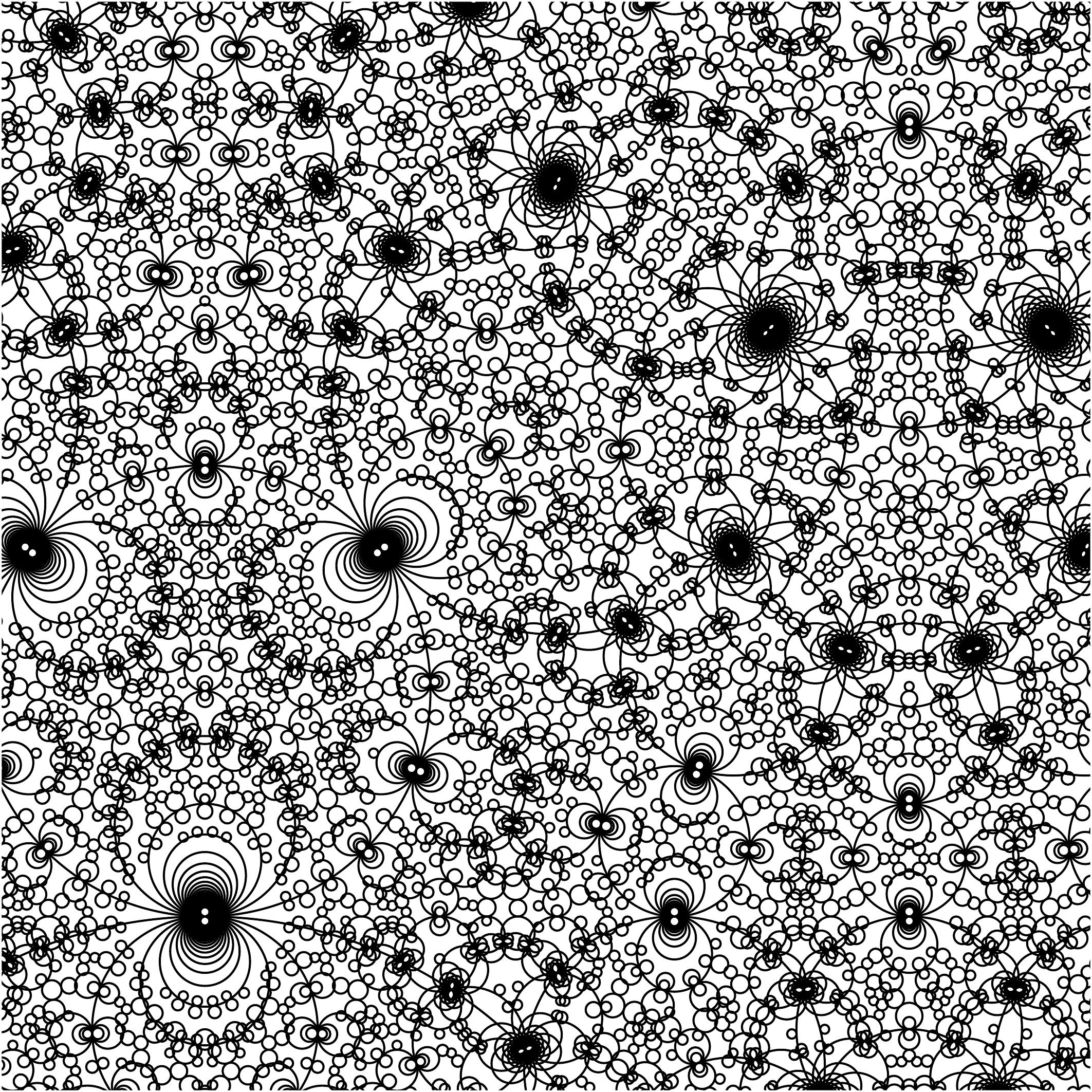}
    \captionsetup{width=0.936\textwidth}
    \caption{Piece of $\cS_4$ for $\bQ(i\sqrt{39})$, showing circles of curvature up to $150$.}\label{fig:3}
\end{figure}

Also in Figure \ref{fig:3}, note that any two circles appear to be linked by a chain of circles. This arrangement turns out to be \textit{finitely connected} (Definition \ref{def:fincon}), meaning there is an upper bound on the chain length needed to connect any circle of nonzero curvature to one at least twice as large. 

\begin{theorem}\label{thm:intro2}If $\cS_D$ is finitely connected, then it covers all of $\widehat{K}$. In particular, the class group of $K$ is generated by ideal classes of primes with norm dividing $D\Delta$.\end{theorem}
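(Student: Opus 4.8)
The plan is to convert the geometric hypothesis into a statement about which ideal classes appear on circles, reduce the entire theorem to a single generation statement, and then extract that statement from the uniform bound hidden in finite connectivity. I would first set up the dictionary between circles and ideal classes supplied by the orbit description of Theorem \ref{thm:redef}. In that category each circle of $\cS_D$ is an image of a fixed circle under an element of $\text{PGL}_2(K)$, and its $K$-points form a single orbit of the relevant arithmetic subgroup; consequently all of them share one ideal class, so it is legitimate to speak of the class of a circle. By the fact recorded just before the theorem, the set $\mathcal{C}\subseteq\text{Cl}(K)$ of covered classes (Definition \ref{def:cover}) is well defined, and covering all of $\widehat{K}$ is exactly the statement $\mathcal{C}=\text{Cl}(K)$. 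The local input I would isolate is that two circles tangent to (or meeting) each other at a point $\alpha\in K$ have classes differing by the class of a prime $\fp$ with $N(\fp)\mid D\Delta$; this is forced by the integrality of curvatures together with the congruences defining $\cS_D$ in Definition \ref{def:SD}, in the same spirit as Proposition \ref{thm:angles}. Thus moving along a chain of mutually tangent circles multiplies the ambient class by a product of prime classes of norm dividing $D\Delta$, and the classes occurring in one chain-component form a single coset of the subgroup $H:=\langle[\fp]:N(\fp)\mid D\Delta\rangle$.

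Next I would reduce both assertions of the theorem to the identity $H=\text{Cl}(K)$. The arrangement always covers the principal class, since it contains a line through $\infty$ whose rational points are principal, so the component of that line realizes exactly $H$. Finite connectivity (Definition \ref{def:fincon}) lets one iterate the bounded doubling step to connect any circle to circles of arbitrarily small curvature, and these large circles and lines are mutually tangent at the ``top'' of the fractal; hence finite connectivity forces $\cS_D$ to be a single chain-component, giving $\mathcal{C}=H$. Therefore $\cS_D$ covers all of $\widehat{K}$ if and only if $H=\text{Cl}(K)$, and in that case the ``in particular'' clause is automatic: every class is that of some circle, which is joined to the principal line by a chain, and reading off the prime factors along the chain exhibits the class as a product of prime classes of norm dividing $D\Delta$.

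It remains to prove the crux, that finite connectivity forces $H=\text{Cl}(K)$; I would argue the contrapositive. Assume $H\subsetneq\text{Cl}(K)$, so that only classes in $H$ occur on circles and the complementary cosets are absent. A chain of length at most $N$ alters the class of a circle only within the set of products of at most $N$ prime classes of norm dividing $D\Delta$, a fixed finite subset of $H$. Using the self-similarity of $\cS_D$, namely its invariance under an element of the symmetry group that scales curvatures by a fixed factor greater than $1$, I would produce circles at arbitrarily small scales whose only same-scale tangency partners in $\cS_D$ lie in increasingly distant classes, so that connecting such a circle to one at least twice as large requires passing through more and more tangencies. This contradicts the existence of the uniform bound $N$. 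The main obstacle is exactly this quantitative step: turning ``a coset is missing'' into ``the number of tangencies needed to double a circle is unbounded'' requires controlling how the absent classes interleave with the curvature spectrum across scales, and it is here that the fractal geometry of $\cS_D$ must be used with genuine metric estimates rather than the purely combinatorial bookkeeping of the earlier steps.
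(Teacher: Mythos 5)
There is a genuine gap, and you have in fact named it yourself: the entire theorem lives in the step you defer at the end (``finite connectivity forces $H=\mathrm{Cl}(K)$''), and your sketch of that step is not an argument. Beyond that, the framing has two problems. First, Definition \ref{def:fincon} does \emph{not} require the intersection points $C_{k-1}\cap C_k$ to lie in $\widehat{K}$ (the paper remarks on this explicitly), so ``reading off the prime factors along the chain'' is unavailable: the class-tracking along chains that you rely on is exactly the content of \emph{rational} connectivity (Proposition \ref{prop:ratcon} and Corollary \ref{cor:finrat}), not finite connectivity. Second, the claimed equivalence ``$\cS_D$ covers all of $\widehat{K}$ if and only if $H=\mathrm{Cl}(K)$'' is only proved in one direction by the paper's results: Proposition \ref{prop:DDelta} says the covered classes are \emph{contained in} a single coset of $\Gamma_{D\Delta}$, not that they fill it out, so $H=\mathrm{Cl}(K)$ does not by itself yield the covering statement.

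The paper's proof runs on an entirely different engine, which is worth internalizing because it is what makes the unboundedness hypothesis usable. Given $\alpha\in K$, one approximates it by $\alpha_0/\beta_0$ with $\beta_0\alpha-\alpha_0=1$, transports a fixed circle by the corresponding $M_0\in\mathrm{PSL}_2(\cO)$ to get a circle $C_0$ within $O(1/|\beta_0|)$ of $\alpha$ with curvature $O(|\beta_0|^2)$, and then applies finite connectivity $m$ times to reach, in at most $mn$ steps each of diameter $O(2^m/|\beta_0|^2)$, a circle $C_j$ of curvature at most $c|\beta_0|^2/2^m$ that is still very close to $\alpha$. The punchline is Proposition \ref{prop:holes}: a circle of that curvature which does \emph{not} pass through $\alpha$ must keep a definite distance from it, and for $|\beta_0|$ large the two estimates are incompatible, so $\alpha\in C_j$. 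Thus every point of $\widehat{K}$ is covered, and the class-group statement is then immediate from Proposition \ref{prop:DDelta}: if all classes are covered and they all lie in one coset of $\Gamma_{D\Delta}$, that subgroup is everything. Your instinct to reduce the group-theoretic claim to the covering claim via Proposition \ref{prop:DDelta} is correct; what is missing is any mechanism converting the uniform chain bound into the covering statement, and the repulsion estimate of Proposition \ref{prop:holes} is precisely that mechanism.
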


Primes dividing $\Delta$ generate the $2$-torsion subgroup of the class group, so primes dividing $D$ are responsible for each $2$-torsion coset in Theorem \ref{thm:intro2}. If we strengthen the connectivity hypothesis by assuming consecutive circles in a chain intersect at a point in $K$, then $D\Delta$ can be replaced by $D$ in the theorem (Corollary \ref{cor:finrat}). This is the case in Figure \ref{fig:3}. So $\cS_4$ is a kind of geometric assertion that the class group of $\bQ(i\sqrt{39})$ is cyclic, generated by a prime over $2$.

\section{Background}\label{sec:2}

As detailed in \cite{stange}, an \textit{oriented circle} $C$ is the set of $\alpha/\beta\in\widehat{\bC}$ solving \begin{equation}\label{eq:1}r|\alpha|^2-2\Re(\overline{\alpha}\beta\zeta)+\hat{r}|\beta|^2=0\end{equation} for some $\zeta\in\bC$ and $r,\hat{r}\in\bR$ satisfying $|\zeta|^2-\hat{r}r=1$. Given $C$, let $\bv_C=[\zeta\;\;\overline{\zeta}\;\;\hat{r}\;\;r]$.

If $r\neq 0$, (\ref{eq:1}) rearranges to the equation of a circle with center $\zeta/r$ and radius $1/|r|$: $|\alpha/\beta-\zeta/r|=1/|r|$. The sign of $r$ indicates orientation. Positive defines the \textit{interior} of $C$ so as to contain its center, while this is the \textit{exterior} when $r$ is negative. If $r=0$, then the point at infinity, $1/0$, is a solution, and $C$ appears as a line orthogonal to $\zeta$. In this case, \textit{interior} is defined as the side to which $\zeta$ points. 

The \textit{curvature} of $C$ is $r$, the \textit{curvature-center} is $\zeta$, and the \textit{cocurvature} is $\hat{r}$.

$\text{PSL}_2(\bC)$ acts on $\widehat{\bC}$ as M\"{o}bius transformations, which map oriented circles to oriented circles and preserve the orientation just described. This action corresponds to a linear transformation on vectors $\bv_C$. See \cite{carter} for a proof of the following.

\begin{proposition}\label{prop:exiso}If $M\in\emph{PSL}_2(\bC)$, then \begin{equation}\label{eq:2}M=\begin{bmatrix}\alpha & \gamma \\ \beta & \delta\end{bmatrix}\hspace{0.5cm}\text{and}\hspace{0.5cm}N=\begin{bmatrix}\alpha\overline{\delta} & \beta\overline{\gamma} & \alpha\overline{\gamma} & \beta\overline{\delta} \\ \overline{\beta}\gamma & \overline{\alpha}\delta & \overline{\alpha}\gamma &  \overline{\beta}\delta\\ \alpha\overline{\beta} & \overline{\alpha}\beta & |\alpha|^2 & |\beta|^2 \\ \gamma\overline{\delta} & \overline{\gamma}\delta & |\gamma|^2 & |\delta|^2\end{bmatrix}\end{equation} are such that $\bv_{MC}=\bv_CN$.\end{proposition}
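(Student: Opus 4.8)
The plan is to encode each oriented circle as a Hermitian $2\times2$ matrix, observe that a M\"obius transformation acts on these matrices by an $\mathrm{SL}_2$-congruence coming from $M^{-1}$, and then read off $\bv_{MC}$ and match it against $\bv_C N$ entry by entry. The content of the proposition is really a reorganization of one matrix identity, so the work is mostly bookkeeping once the right framework is in place.

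First I would rewrite the defining equation (\ref{eq:1}). Representing a point of $\widehat{\bC}$ by homogeneous coordinates $x/y$ and setting $\bu=[x\ y]^{\mathsf{T}}$, the left side of (\ref{eq:1}) is the Hermitian form $\bu^{*}A_C\,\bu$ with
\[ A_C=\begin{bmatrix} r & -\zeta \\ -\overline{\zeta} & \hat r \end{bmatrix}. \]
The correspondence $C\mapsto A_C$ is a bijection between vectors $\bv_C=[\zeta\ \overline{\zeta}\ \hat r\ r]$ and Hermitian matrices, under which the normalization $|\zeta|^2-\hat r r=1$ is exactly $\det A_C=-1$; concretely $\zeta=-(A_C)_{12}$, $\hat r=(A_C)_{22}$, and $r=(A_C)_{11}$.

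Next I would determine how $A_C$ transforms. In homogeneous coordinates the M\"obius map is $\bu\mapsto M\bu$, so a point lies on $MC$ if and only if its $M^{-1}$-preimage solves (\ref{eq:1}); substituting $M^{-1}\bu$ into $\bu^{*}A_C\bu=0$ gives
\[ A_{MC}=(M^{-1})^{*}A_C\,M^{-1}. \]
Since $A_C$ is Hermitian and $\det M=1$, the matrix $A_{MC}$ is again Hermitian with determinant $-1$, hence the matrix of a genuinely normalized oriented circle; orientation-preservation is automatic here because the signed curvature is carried in the $(1,1)$-entry. Using $\det M=1$ to write $M^{-1}=\bigl[\begin{smallmatrix}\delta&-\gamma\\-\beta&\alpha\end{smallmatrix}\bigr]$ and $(M^{-1})^{*}=\bigl[\begin{smallmatrix}\overline\delta&-\overline\beta\\-\overline\gamma&\overline\alpha\end{smallmatrix}\bigr]$, I would multiply the three factors and compare with $\bv_C N$. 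For example the $(1,1)$-entry expands to $r'=|\delta|^2 r+\beta\overline\delta\,\zeta+\overline\beta\delta\,\overline\zeta+|\beta|^2\hat r$, which is precisely the fourth entry of $\bv_C N$ coming from the last column $[\beta\overline\delta\ \overline\beta\delta\ |\beta|^2\ |\delta|^2]^{\mathsf{T}}$; the $(2,2)$- and $(1,2)$-entries likewise reproduce the third and first columns of $N$ (recalling $\zeta'=-(A_{MC})_{12}$), and the second column then follows by Hermitian symmetry, i.e.\ conjugation.

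There is no genuine conceptual obstacle: the entire difficulty is the careful bookkeeping of the triple product, where the easy places to slip are the conjugations, the sign introduced by $\zeta=-(A_C)_{12}$, and the nonstandard placement of $M$'s entries ($\gamma$ upper-right, $\beta$ lower-left). I would close with the remark that every entry of $N$ is a degree-two monomial in the entries of $M$, so $N$ is unchanged under $M\mapsto -M$ and is therefore well defined on $\text{PSL}_2(\bC)$, consistent with the statement.
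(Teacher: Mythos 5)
Your proof is correct, and the computation checks out: with $A_C=\left[\begin{smallmatrix} r & -\zeta\\ -\overline{\zeta} & \hat r\end{smallmatrix}\right]$ the three entries of $(M^{-1})^{*}A_C M^{-1}$ do reproduce the fourth, third, and (negated) first columns of $N$ exactly as you describe. Note that the paper gives no proof of this proposition at all---it defers to the reference \cite{carter}---so there is nothing internal to compare against; the Hermitian-form/congruence argument you use is the standard one for the spin homomorphism. The only place where your write-up leans on an assertion rather than an argument is ``orientation-preservation is automatic because the signed curvature is carried in the $(1,1)$-entry'': a priori both $\pm(M^{-1})^{*}A_CM^{-1}$ cut out the same unoriented circle, so you should say why the $+$ sign is the one matching the paper's orientation convention. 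This is a one-line fix: the interior of $C$ is exactly the locus $\bu^{*}A_C\bu<0$ (evaluate at the center to get $-1/r$), and since $\bu^{*}A_{MC}\bu=(M^{-1}\bu)^{*}A_C(M^{-1}\bu)$ the negative locus of $A_{MC}$ is the $M$-image of the interior of $C$, which pins down the sign. With that sentence added the proof is complete.
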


The map $M\mapsto N$ is called the \textit{spin homomorphism}. Those $M$ for which $N$ has entries in $\cO$ form the \textit{extended Bianchi group}.

Vectors of the form $\bv_C$ generate a four-dimensional subspace of $\bC^2\times\bR^2$, which we identify with Minkowski space via the Hermitian form of signature $(1,3)$, $$Q=\frac{1}{2}\begin{bmatrix}1 & 0 & 0 & 0 \\ 0 & 1 & 0 & 0 \\ 0 & 0 & 0 & \!-1\! \\ 0 & 0 & \!-1 & 0\end{bmatrix}.$$ With respect to $Q$, each $\bv_C$ is a unit vector: $\|\bv_C\|_Q=\langle\bv_C,\bv_C\rangle_Q=|\zeta|^2-r\hat{r}=1$. In particular, $N$ from (\ref{eq:2}) preserves unit vectors (a so-called Lorentz transformation).

A modern perspective on the following properties is to treat them as inheritance from the geometry of Minkowski space. But they can also be verified directly with Euclidean geometry.

\begin{proposition}\label{prop:angle}$C$ and $C'$ intersect if and only if $\langle\bv_C,\bv_{C'}\rangle_Q\in[-1,1]$, in which case $\arccos(\langle\bv_C,\bv_{C'}\rangle_Q)$ is the intersection angle.\end{proposition}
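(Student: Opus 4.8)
The plan is to reduce the claim to a direct computation on the Hermitian form $Q$, interpreting the quantity $\langle\bv_C,\bv_{C'}\rangle_Q$ geometrically. First I would expand the inner product: for $C$ with data $(\zeta,r,\hat r)$ and $C'$ with data $(\zeta',r',\hat r')$, the form $Q$ gives $\langle\bv_C,\bv_{C'}\rangle_Q=\Re(\overline{\zeta}\zeta')-\tfrac{1}{2}(r\hat r'+r'\hat r)$. The goal is to show this equals the cosine of the intersection angle whenever the circles meet, and that it lies in $[-1,1]$ precisely in that case. I would first dispose of the generic case where both curvatures are nonzero, then handle the degenerate cases (one or both circles are lines, i.e. $r=0$ or $r'=0$) separately, since the Euclidean angle between a line and a circle, or between two lines, requires its own elementary description.

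For the main case $r,r'\neq 0$, I would translate the Minkowski data back into Euclidean terms. Circle $C$ has center $p=\zeta/r$ and radius $\rho=1/|r|$, and similarly $C'$ has center $p'=\zeta'/r'$ and radius $\rho'=1/|r'|$. Using the unit-vector constraint $|\zeta|^2-r\hat r=1$ to eliminate the cocurvatures $\hat r=(|\zeta|^2-1)/r$, I would substitute into the expanded inner product and simplify. The expected outcome is the clean identity
\begin{equation}\label{eq:distform}
\langle\bv_C,\bv_{C'}\rangle_Q=\frac{\rho^2+\rho'^2-|p-p'|^2}{2\rho\rho'}.
\end{equation}
This is exactly the law of cosines applied to the triangle formed by the two centers and an intersection point: if the circles meet, their two radii and the center-to-center distance $|p-p'|$ form a triangle whose angle at the intersection point is the supplement of the angle the tangent lines make, and \eqref{eq:distform} computes $\cos$ of the angle between the circles directly. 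The two circles intersect in the Euclidean sense exactly when $|\rho-\rho'|\le|p-p'|\le\rho+\rho'$, and a short manipulation shows this double inequality is equivalent to the right-hand side of \eqref{eq:distform} lying in $[-1,1]$; this establishes both halves of the proposition simultaneously. Care is needed with orientation: because the sign of $r$ records which side is the interior, the sign of $\langle\bv_C,\bv_{C'}\rangle_Q$ distinguishes the intersection angle from its supplement, so I would check that the convention in Section~\ref{sec:2} makes $\arccos(\langle\bv_C,\bv_{C'}\rangle_Q)$ the oriented angle rather than $\pi$ minus it.

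For the degenerate cases, if exactly one circle is a line, say $r'=0$, the constraint forces $|\zeta'|=1$, so $\zeta'$ is a unit normal to the line $C'$, and $\langle\bv_C,\bv_{C'}\rangle_Q=\Re(\overline{\zeta'}\zeta)-\tfrac{1}{2}r'\hat r=\Re(\overline{\zeta'}\zeta)$ reduces to the signed distance from the center $p=\zeta/r$ to the line, scaled by $r$; this equals $\cos$ of the angle at which the circle meets the line, with intersection occurring iff the distance is at most the radius, i.e. iff the value lies in $[-1,1]$. If both are lines, $r=r'=0$ and the inner product collapses to $\Re(\overline{\zeta}\zeta')$, the cosine of the angle between their normals, which is automatically in $[-1,1]$ (two lines always ``intersect,'' possibly at $\infty$). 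The main obstacle I anticipate is bookkeeping the orientation sign consistently across all cases so that the arccosine yields the correct angle and not its supplement; the underlying geometry in each case is elementary, but reconciling the Minkowski sign conventions with the Euclidean picture is where an error is most likely to creep in. Alternatively, one may invoke the stated Lorentz-invariance of $N$ to reduce to a normalized configuration (e.g. move $C'$ to a fixed line through $0$ via a Möbius transformation in $\text{PSL}_2(\bC)$), which makes the computation trivial and sidesteps most of the casework; I would present this invariance-based reduction as the streamlined argument after verifying the key identity \eqref{eq:distform} once.
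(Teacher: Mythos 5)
The paper does not actually prove this proposition; it states it with the remark that such facts are inherited from the geometry of Minkowski space ``but can also be verified directly with Euclidean geometry,'' and your proposal is precisely that direct Euclidean verification, so there is no competing argument to compare against. Your plan is essentially sound, but two details need repair. First, the key identity as you display it is only correct for like-oriented circles: substituting $\hat r=(|\zeta|^2-1)/r$ and $\hat r'=(|\zeta'|^2-1)/r'$ gives
\begin{equation*}
\rho^2+\rho'^2-|p-p'|^2=-\frac{\hat r}{r}-\frac{\hat r'}{r'}+\frac{2\Re(\overline{\zeta}\zeta')}{rr'},
\end{equation*}
and dividing by $2\rho\rho'=2/|rr'|$ yields $\langle\bv_C,\bv_{C'}\rangle_Q$ only when $rr'>0$; in general the law-of-cosines quantity equals $\langle\bv_C,\bv_{C'}\rangle_Q$ times the sign of $rr'$. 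So the orientation ``caveat'' you append at the end is not optional bookkeeping but a necessary correction to the formula itself: the unoriented law of cosines cannot distinguish $\theta$ from $\pi-\theta$, and the sign of $rr'$ is exactly what produces the oriented angle (consistent with the paper's conventions, e.g.\ $\langle\bv_{C_1},\bv_{C_2}\rangle_Q=-1$ for the externally tangent, disjoint-interior circles in the proof of Proposition~\ref{prop:baragar}, versus $+1$ for circles in a family with nested interiors). Second, in the circle--line case you drop the wrong cross term: with $r'=0$ the inner product is $\Re(\overline{\zeta}\zeta')-\tfrac12 r\hat r'$, not $\Re(\overline{\zeta}\zeta')$; the surviving term $-\tfrac12 r\hat r'$ carries the line's offset from the origin (the line $C'$ is $\{z:2\Re(\overline{z}\zeta')=\hat r'\}$) and is exactly what is needed to obtain $r$ times the signed distance from the center $\zeta/r$ to $C'$. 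Your stated geometric conclusion there is right, but the intermediate formula contradicts it unless the line passes through the origin. With those two sign/term repairs, the casework closes and the argument is complete.
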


\begin{proposition}\label{prop:reflect}The vector associated to the image of $C'$ reflected over $C$ is $\bv_{C'}-2\langle\bv_C,\bv_{C'}\rangle_Q\bv_C$. In terms of M\"{o}bius transformations, this image is $$\begin{bmatrix}\zeta & -\hat{r} \\ r & -\overline{\zeta}\end{bmatrix}\overline{C'},$$ where $[\zeta\;\,\overline{\zeta}\;\,\hat{r}\;\,r]=\bv_C$ and $\overline{C'}$ is the image of $C'$ under complex conjugation.\end{proposition}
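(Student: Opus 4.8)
The statement packages two claims: that reflecting $C'$ over $C$ produces the oriented circle with vector $\bv_{C'}-2\langle\bv_C,\bv_{C'}\rangle_Q\bv_C$, and that this reflected circle equals $M\overline{C'}$ for $M=\begin{bmatrix}\zeta & -\hat{r}\\ r & -\overline{\zeta}\end{bmatrix}$, where ``reflected over $C$'' means geometric inversion in $C$. The plan is to pin down the M\"obius description first, since it fixes the map precisely, and then extract the vector formula. When $r\neq 0$, $C$ has center $\zeta/r$ and radius $1/|r|$, so inversion in $C$ sends $z$ to $\zeta/r+r^{-2}/\overline{(z-\zeta/r)}$. Clearing denominators and using $|\zeta|^2-\hat{r}r=1$ collapses this to $z\mapsto(\zeta\overline{z}-\hat{r})/(r\overline{z}-\overline{\zeta})$, which is exactly the M\"obius transformation $M$ applied to $\overline{z}$; the case $r=0$ (reflection in a line) is analogous. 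Note $\det M=\hat{r}r-|\zeta|^2=-1$, matching the orientation reversal of inversion, and that precomposing $M$ with conjugation is precisely ``apply $M$ to $\overline{C'}$.''

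For the vector formula I would recognize the right-hand side as the Householder reflection of $\bv_{C'}$ across the hyperplane $\bv_C^\perp$. Since $\|\bv_C\|_Q=1$, the map $R\colon\bx\mapsto\bx-2\langle\bv_C,\bx\rangle_Q\bv_C$ is an $\bR$-linear involution with $R(\bv_C)=-\bv_C$ that fixes $\bv_C^\perp$ pointwise, so it suffices to check that inversion in $C$ induces the same $\bR$-linear map on circle-vectors and agrees with $R$ on a spanning set. Linearity holds because inversion is conjugation followed by $M$: conjugation sends $[\zeta'\;\overline{\zeta'}\;\hat{r}'\;r']$ to $[\overline{\zeta'}\;\zeta'\;\hat{r}'\;r']$, an $\bR$-linear coordinate swap read off from (\ref{eq:1}), while $M$, rescaled to $iM\in\text{SL}_2(\bC)$ (which induces the same M\"obius map), acts $\bR$-linearly by Proposition \ref{prop:exiso}. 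For the spanning set I use $\bv_C$ together with $\bv_C^\perp$: inversion fixes $C$ as a set but swaps its interior and exterior, giving $\bv_C\mapsto-\bv_C=R(\bv_C)$; and by Proposition \ref{prop:angle} the vectors of $\bv_C^\perp$ are exactly those of circles orthogonal to $C$, each of which inversion maps to itself with orientation preserved, hence fixes, as does $R$. Because $\bv_C$ and $\bv_C^\perp$ span, the two $\bR$-linear maps coincide.

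The delicate points are the orientation bookkeeping—confirming that inversion reverses the orientation of $C$ (so $-\bv_C$) but preserves that of an orthogonal circle (so fixed, not negated)—and the fact that Proposition \ref{prop:exiso} is stated only for holomorphic elements of $\text{PSL}_2(\bC)$, so the anti-holomorphic conjugation must be inserted by hand as the coordinate swap above. A purely computational alternative sidesteps the geometry: apply the conjugation swap to $\bv_{C'}$, multiply by the spin-homomorphism matrix of $iM$, and simplify the entries using $|\zeta|^2-\hat{r}r=1$ to recover $\bv_{C'}-2\langle\bv_C,\bv_{C'}\rangle_Q\bv_C$. There the only obstacle I anticipate is the length of the $4\times 4$ expansion.
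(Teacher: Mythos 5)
The paper gives no proof of Proposition \ref{prop:reflect}: it only remarks, just before Proposition \ref{prop:angle}, that such facts can be ``verified directly with Euclidean geometry'' or treated as ``inheritance from the geometry of Minkowski space.'' Your argument is a correct implementation of the latter route, so there is nothing in the paper for it to diverge from. The computation identifying inversion in $C$ with $z\mapsto(\zeta\overline{z}-\hat{r})/(r\overline{z}-\overline{\zeta})$ is right (including the $r=0$ case, where the formula degenerates to $\hat{r}\zeta-\zeta^2\overline{z}$, the Euclidean reflection in the line, and the check $\det M=-1$), and the spanning-set argument for the vector formula is sound: the induced map on vectors is $\bR$-linear because it is the conjugation coordinate swap followed by the spin image of $iM\in\text{SL}_2(\bC)$, and it agrees with $\bx\mapsto\bx-2\langle\bv_C,\bx\rangle_Q\bv_C$ on $\bv_C$ (orientation reversal of $C$ itself) and on unit vectors of $\bv_C^{\perp}$ (orthogonal circles, each preserved with orientation). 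The one imprecision is the phrase ``the vectors of $\bv_C^{\perp}$ are exactly those of circles orthogonal to $C$'': by (\ref{eq:1}) only the \emph{unit} vectors of $\bv_C^{\perp}$ are circle vectors. This does not damage the proof, because $Q$ restricted to $\bv_C^{\perp}$ is nondegenerate of signature $(2,1)$, so its unit vectors still span $\bv_C^{\perp}$, and together with $\bv_C$ they span the full four-dimensional real space on which both linear maps are defined; hence the two maps coincide.
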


\section{The object of study}\label{sec:3}

\begin{definition}\label{def:SD}Given a fixed imaginary quadratic field, for $D\in\bR$ let $\cS_D$ denote the set of oriented circles $C$ for which $i\sqrt{D}\bv_C\in\cO^2\times\sqrt{\Delta}\bZ^2$.\end{definition}

\begin{proposition}\label{prop:nonempty}$\cS_D$ is nonempty if and only if $D$ is a positive integer for which there exists $\alpha\in\cO$ satisfying $|\alpha|^2\equiv D\,\emph{mod}\,\Delta$.\end{proposition}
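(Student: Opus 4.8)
The plan is to read Definition \ref{def:SD} off coordinatewise and see exactly what membership of $i\sqrt D\,\bv_C$ in the lattice $\cO^2\times\sqrt\Delta\,\bZ^2$ imposes on the entries of $\bv_C=[\zeta\;\;\overline\zeta\;\;\hat r\;\;r]$. Writing $\alpha:=i\sqrt D\,\zeta$, the first two conditions $i\sqrt D\,\zeta,\,i\sqrt D\,\overline\zeta\in\cO$ are not independent: since $\overline{i\sqrt D\,\zeta}=-i\sqrt D\,\overline\zeta$, the two entries are complex conjugates up to a sign, so both hold as soon as $\alpha\in\cO$. The remaining two conditions say $i\sqrt D\,\hat r,\ i\sqrt D\,r\in\sqrt\Delta\,\bZ$. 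As these entries are real multiples of $i\sqrt D$ while $\sqrt\Delta=i\sqrt{|\Delta|}$ is purely imaginary, this already signals that $D$ must be positive for nonzero curvatures to arise. I would prove the two implications separately, using the defining unit-vector identity $\|\bv_C\|_Q=|\zeta|^2-\hat r r=1$ as the engine in both, and recalling that for $\alpha\in\cO$ the field norm agrees with the squared modulus, $N(\alpha)=|\alpha|^2\in\bZ_{\ge0}$.

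For the forward direction, suppose $C\in\cS_D$. The key move is to multiply the identity $|\zeta|^2-\hat r r=1$ by $D$ and rewrite each term. On one hand $D|\zeta|^2=|\alpha|^2=N(\alpha)$; on the other, since $(i\sqrt D)^2=-D$, we have $D\,\hat r r=-(i\sqrt D\,\hat r)(i\sqrt D\,r)$, a product of two elements of $\sqrt\Delta\,\bZ$. Because $(\sqrt\Delta)^2=\Delta$, that product is an integer multiple of $\Delta$, so substitution yields $D=N(\alpha)+\Delta\cdot(\text{integer})$; equivalently this is the value $\langle i\sqrt D\,\bv_C,\,i\sqrt D\,\bv_C\rangle_Q=D$ computed arithmetically from lattice coordinates. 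Three conclusions drop out simultaneously: $N(\alpha)\in\bZ$ forces $D\in\bZ$; reducing modulo $\Delta$ gives $|\alpha|^2=N(\alpha)\equiv D\pmod\Delta$; and positivity follows since $D$ equals the squared $Q$-length $\|i\sqrt D\,\bv_C\|_Q$ of a nonzero vector, the degenerate value $D=0$ being set aside.

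For the converse, given a positive integer $D$ and $\alpha\in\cO$ with $|\alpha|^2\equiv D\pmod\Delta$, I would build a circle by hand. Put $t:=(D-N(\alpha))/\Delta\in\bZ$ and form the lattice vector $\bu:=[\alpha\;\;{-\overline\alpha}\;\;\sqrt\Delta\,t\;\;\sqrt\Delta]\in\cO^2\times\sqrt\Delta\,\bZ^2$. A short computation gives $\langle\bu,\bu\rangle_Q=N(\alpha)+t\Delta=D$, so the rescaled vector $\bv_C:=\bu/(i\sqrt D)$ satisfies $\|\bv_C\|_Q=1$. It remains to confirm that $\bv_C$ genuinely has circle-vector shape $[\zeta\;\;\overline\zeta\;\;\hat r\;\;r]$: its first two entries are conjugate because $\overline{i\sqrt D}=-i\sqrt D$ (here $D>0$ is used), and its last two entries, $\sqrt\Delta\,t/(i\sqrt D)$ and $\sqrt\Delta/(i\sqrt D)=\sqrt{|\Delta|/D}$, are real precisely because $D>0$. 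Hence $C\in\cS_D$, so $\cS_D$ is nonempty.

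These computations are routine; the step needing the most care, and where the hypotheses genuinely interlock, is the forward direction's simultaneous extraction of all three facts — integrality, positivity, and the congruence — from the single norm identity together with the two-part structure of the lattice (the ring $\cO$ governing curvature-centers and $\sqrt\Delta\,\bZ$ governing the curvature and cocurvature). The congruence is the visible content and falls out immediately, but I would be most careful to justify that $D$ is forced to be a \emph{positive integer} rather than merely real, including treating the degenerate value $D=0$, since that is exactly the point at which the specific form $Q$ and the shape of the lattice are used.
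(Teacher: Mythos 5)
Your proposal follows essentially the same route as the paper: your converse construction is the paper's in disguise (the lattice vector $\bu=[\alpha\;\;{-\overline\alpha}\;\;\sqrt\Delta\,t\;\;\sqrt\Delta]$ is, up to a sign on the first two entries, exactly $i\sqrt D$ times the explicit $\bv$ the paper writes down), and your forward direction is the same manipulation, scaling $\|\bv_C\|_Q=1$ by $D$ and reducing modulo $\Delta$. The congruence and the integrality of $D$ come out correctly in both.

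The one step that does not hold up is your justification of $D>0$. You assert that ``positivity follows since $D$ equals the squared $Q$-length $\|i\sqrt D\,\bv_C\|_Q$ of a nonzero vector,'' but $Q$ has signature $(1,3)$: nonzero vectors can have zero or negative $Q$-norm, so positivity cannot be read off from nonvanishing. Worse, the identity $\|i\sqrt D\,\bv_C\|_Q=D$ (and likewise your rewriting $D|\zeta|^2=|\alpha|^2$) already presupposes $|i\sqrt D|^2=D$, i.e.\ $D\ge 0$; for $D<0$ one instead gets $\|i\sqrt D\,\bv_C\|_Q=|D|=-D$, so extracting positivity from the norm identity is circular. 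The correct argument is the one you gesture at in your opening paragraph and the one the paper actually uses: if $r\neq0$ (or $\hat r\neq0$), then $i\sqrt D\,r$ is a nonzero element of $\sqrt\Delta\,\bZ$, hence purely imaginary, which forces $i\sqrt D$ to be purely imaginary and therefore $\sqrt D$ real and $D>0$. You should make that the argument of record; note also that this (like the paper's own one-line version) still leaves untreated the degenerate circles with $r=\hat r=0$, which is precisely the case you promised to be careful about.
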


\begin{proof}Suppose the congruence holds for some $\alpha\in\cO$ and positive integer $D$. Then $$\bv=\left[\frac{i\alpha}{\sqrt{D}} \;\;\; \frac{\overline{i\alpha}}{\sqrt{D}} \;\;\;\frac{|\alpha|^2-D}{\sqrt{D|\Delta|}} \;\;\;\frac{\sqrt{|\Delta|}}{\sqrt{D}}\right]$$ corresponds to a circle in $\cS_D$ since $\|\bv\|_Q=1$, and $i\sqrt{D}\bv\in\cO^2\times\sqrt{\Delta}\bZ^2$.

Conversely, let $C\in\cS_D$ and let $\bv_C=[\zeta\;\;\overline{\zeta}\;\;\hat{r}\;\;r]$. Set $\alpha=i\sqrt{D}\zeta\in\cO$. Scaling $1=\|\bv_C\|_Q$ by $D$ gives $D=|\alpha|^2-D\hat{r}r\equiv|\alpha|^2\,\text{mod}\,\Delta$, where the congruence is due to $i\sqrt{D}r,i\sqrt{D}\hat{r}\in\sqrt{\Delta}\bZ$. This shows $D\in\bZ$, and $i\sqrt{D}r\in\sqrt{\Delta}\bZ$ shows $D>0$.\end{proof}

All arrangements are tacitly assumed to be nonempty for the rest of the paper.

\subsection{Basic properties}Here we determine the symmetries of $\cS_D$ as well as its geometry at points where circles intersect.

\begin{proposition}\label{prop:symmetry}The extended Bianchi group is the maximal subgroup of $\emph{PSL}_2(\bC)$ that fixes $\cS_D$.\end{proposition}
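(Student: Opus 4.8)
The plan is to transport the entire problem through the spin homomorphism of Proposition \ref{prop:exiso}, turning the geometric statement ``$M$ fixes $\cS_D$'' into a linear statement about the associated matrix $N$. Let $\Lambda_0 = \{\,[\,a\;\;-\overline{a}\;\;c\;\;d\,] : a\in\cO,\ c,d\in\sqrt{\Delta}\,\bZ\,\}$ be the $\bZ$-lattice containing all the scaled vectors $i\sqrt{D}\,\bv_C$ with $C\in\cS_D$; the definition of $\cS_D$ says precisely that $i\sqrt{D}\,\bv_C$ is a point of $\Lambda_0$ lying on the quadric $\|\cdot\|_Q=D$ (here I am using that the second coordinate of $i\sqrt{D}\,\bv_C$ equals minus the conjugate of the first). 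Since $\bv_{MC}=\bv_C N$ and scaling by $i\sqrt{D}$ commutes with $N$, the action of $M$ on $\cS_D$ is exactly right multiplication by $N$ on these vectors. I would thus reduce the proposition to two equivalences: that $M$ fixes $\cS_D$ if and only if $N$ preserves $\Lambda_0$, and that $N$ preserves $\Lambda_0$ if and only if $N$ has entries in $\cO$, i.e.\ $M$ lies in the extended Bianchi group.

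The easy directions I would do by a direct module computation. Taking $N$ with entries in $\cO$, I would check that $\mathbf{w}N\in\Lambda_0$ for every $\mathbf{w}=[\,a\;\;-\overline{a}\;\;c\;\;d\,]\in\Lambda_0$, with no appeal to the norm condition. Three elementary facts make this work: $\sqrt{\Delta}\in\cO$, so the entries $c,d\in\sqrt{\Delta}\,\bZ$ behave like elements of $\cO$ in the first two coordinates; the real entries of $N$, namely $|\alpha|^2,|\beta|^2,|\gamma|^2,|\delta|^2$, lie in $\cO\cap\bR=\bZ$, which is exactly what keeps the last two coordinates of $\mathbf{w}N$ inside $\sqrt{\Delta}\,\bZ$ rather than merely in $\cO$; and $z-\overline{z}\in\sqrt{\Delta}\,\bZ$ for every $z\in\cO$, which absorbs the cross terms in those two coordinates. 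Because $N$ is a Lorentz transformation it also preserves the quadric, so it permutes the norm-$D$ vectors of $\Lambda_0$, i.e.\ $W=\{i\sqrt D\,\bv_C:C\in\cS_D\}$; as the extended Bianchi group is a group, $M^{-1}$ does the same, giving a bijection of $\cS_D$.

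The reverse directions carry the real content. Suppose $M$ fixes $\cS_D$; then $N$ permutes $W$ and hence preserves its $\bZ$-span, and the main obstacle is to show that this span is all of $\Lambda_0$. I would assemble a spanning set from the explicit circles of Proposition \ref{prop:nonempty}: each $\alpha\in\cO$ with $|\alpha|^2\equiv D\bmod\Delta$ gives a vector whose first two coordinates are $-\alpha,-\overline{\alpha}$ and whose last coordinate is $\sqrt{\Delta}$, and I would enlarge this family using the translations $z\mapsto z+\mu$ ($\mu\in\cO$) and the inversion $z\mapsto -1/z$, which one checks lie in the extended Bianchi group and hence (by the previous paragraph) preserve $\cS_D$. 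The delicate point is that the admissible $\alpha$ need not additively generate $\cO$, so the naive combinations only fill the sublattice attached to the ideal $(\sqrt{\Delta})$; confirming that the full span is exactly $\Lambda_0$, and not a proper finite-index sublattice, requires tracking the cosets of $(\sqrt{\Delta})$ produced by the group action. Once $\bZ W=\Lambda_0$ is in hand, $N$ preserves $\Lambda_0$, and evaluating $N$ on the explicit $\bZ$-basis $[\,1\;\;-1\;\;0\;\;0\,]$, $[\,\omega\;\;-\overline{\omega}\;\;0\;\;0\,]$, $[\,0\;\;0\;\;\sqrt{\Delta}\;\;0\,]$, $[\,0\;\;0\;\;0\;\;\sqrt{\Delta}\,]$ (with $\cO=\bZ[\omega]$) and reading off the coordinates forces each entry of $N$ into $\cO$; thus $M$ belongs to the extended Bianchi group, completing the maximality.
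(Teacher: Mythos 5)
Your first half (the extended Bianchi group preserves $\cS_D$) is essentially the paper's own computation and is fine: with $N$ having entries in $\cO$, the first two coordinates of $\mathbf{w}N$ stay in $\cO$, and the last two stay in $\sqrt{\Delta}\bZ$ because $z-\overline{z}\in\sqrt{\Delta}\bZ$ for every $z\in\cO$ while the real entries $|\alpha|^2,\dots,|\delta|^2$ lie in $\bZ$. The divergence, and the problem, is in the maximality direction. The paper never tries to show that the stabilizer of the lattice consists exactly of the $N$ with entries in $\cO$; it observes only that any subgroup fixing $\cS_D$ is \emph{discrete} (its spin image permutes a full-rank discrete set of vectors), and then invokes the cited theorem of Elstrodt--Grunewald--Mennicke that the extended Bianchi group is the maximal \emph{discrete} subgroup of $\mathrm{PSL}_2(\bC)$ containing $\mathrm{PSL}_2(\cO)$. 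That citation carries all the weight of maximality; your outline is in effect an attempt to reprove it from scratch.

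Two steps of that attempt are genuinely incomplete. First, you need $\bZ W=\Lambda_0$, where $W$ is the set of scaled vectors coming from $\cS_D$; you flag this as ``delicate'' and as ``requiring tracking the cosets'' but never carry it out, and the gap is not cosmetic, since if $\bZ W$ were a proper finite-index sublattice its stabilizer could be strictly larger than that of $\Lambda_0$. Second, even granting $\bZ W=\Lambda_0$, evaluating $N$ on your basis does not force the entries of $N$ into $\cO$. The vector $[\,1\;\;-1\;\;0\;\;0\,]$ only yields $\alpha\overline{\delta}-\overline{\beta}\gamma\in\cO$, and $[\,\omega\;\;-\overline{\omega}\;\;0\;\;0\,]$ yields $\omega\alpha\overline{\delta}-\overline{\omega}\,\overline{\beta}\gamma\in\cO$; eliminating to isolate $\alpha\overline{\delta}$ costs a factor of $\omega-\overline{\omega}=\sqrt{\Delta}$, so you only learn $\alpha\overline{\delta}\in\sqrt{\Delta}^{\,-1}\cO$ (likewise the third basis vector gives $\sqrt{\Delta}\,\alpha\overline{\beta}\in\cO$ together with $|\alpha|^2\in\bZ$, not $\alpha\overline{\beta}\in\cO$). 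Removing the potential denominator $\sqrt{\Delta}$ requires exploiting the multiplicative relations among the entries of $N$ (they are products of entries of a $2\times 2$ matrix of determinant $1$), which is precisely the nontrivial content of the classification of maximal discrete groups that the paper imports. Either cite that result, as the paper does, or supply both of these missing arguments in full.
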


\begin{proof}Any subgroup of $\text{PSL}_2(\bC)$ that preserves $\cS_D$ is discrete since its image under the spin homomorphism preserves the lattice $\cO^2\times\sqrt{\Delta}\bZ^2$. So it suffices to show the extended Bianchi group is such a subgroup, as it is the maximal discrete subgroup of $\text{PSL}_2(\bC)$ containing $\text{PSL}_2(\cO)$ \cite{elstrodt}.

Let $C\in\cS_D$. Let $M$ belong to the extended Bianchi group so that $N$ from (\ref{eq:2}) has entries from $\cO$. Then the first two entries of $i\sqrt{D}\bv_CN$ also belong to $\cO$. Next, the third entry of $i\sqrt{D}\bv_CN$ is $$2i\Im(\alpha\overline{\gamma}(i\sqrt{D}\zeta))+(i\sqrt{D}\hat{r})|\alpha|^2+(i\sqrt{D}r)|\gamma|^2\in\sqrt{\Delta}\bZ.$$ The fourth entry takes a similar form and is also in $\sqrt{\Delta}\bZ$. Thus Proposition \ref{prop:exiso} gives $i\sqrt{D}\bv_{MC}=i\sqrt{D}\bv_CN\in \cO^2\times\sqrt{\Delta}\bZ^2$, implying $MC\in\cS_D$.\end{proof}

Affine transformations in the extended Bianchi group are translations by $\cO$ and rotations by the unit group $\cO^*$. Translative symmetry can be seen in the first image of Figure \ref{fig:4}, which shows $\cS_{8}$ centered on a fundamental region for the integers in $\bQ(i\sqrt{31})$. Some reflective symmetries generated by complex conjugation (which always fixes $\cS_D$), negation, and translation are also visible in Figure \ref{fig:4}.

\begin{corollary}$\cS_D$ is dense in $\widehat{\bC}$.\end{corollary}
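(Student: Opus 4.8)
The plan is to produce a single $\Gamma$-orbit that is already dense, where $\Gamma$ denotes the extended Bianchi group. By Proposition \ref{prop:symmetry}, $\Gamma$ fixes $\cS_D$ and contains the Bianchi group $\text{PSL}_2(\cO)$; the latter is a lattice in $\text{PSL}_2(\bC)$, so as a non-elementary Kleinian group of finite covolume its limit set is all of $\widehat{\bC}$. This is the one external fact I would invoke, citing \cite{elstrodt}.

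First I would fix a circle $C_0\in\cS_D$, which exists since $\cS_D$ is assumed nonempty, together with a point $p\in C_0\subseteq\widehat{\bC}$. For each $M\in\Gamma$, Proposition \ref{prop:symmetry} gives $MC_0\in\cS_D$, and the circle $MC_0$ passes through $Mp$. Thus the orbit $\Gamma p=\{Mp:M\in\Gamma\}$ is contained in $\bigcup_{C\in\cS_D}C$, the union of all circles in the arrangement.

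Next I would show $\Gamma p$ is dense in $\widehat{\bC}$. The closure $\overline{\Gamma p}$ is a nonempty, closed, $\Gamma$-invariant subset of $\widehat{\bC}$. Since the limit set $\Lambda(\Gamma)$ is the minimal such set for a non-elementary group, one has $\Lambda(\Gamma)\subseteq\overline{\Gamma p}$; and because $\Lambda(\Gamma)=\widehat{\bC}$ we conclude $\overline{\Gamma p}=\widehat{\bC}$. Hence the circles of $\cS_D$ meet a dense set of points, so $\cS_D$ is dense.

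The only substantive step is the identification of the limit set with the whole sphere; everything else is bookkeeping with Proposition \ref{prop:symmetry}. If one preferred to avoid quoting limit-set theory, the same conclusion follows by hand: composing a parabolic generator of $\text{PSL}_2(\cO)$ fixing $\infty$ with translations by $\cO$ and a single inversion produces elements of $\Gamma$ that carry $C_0$ to circles of arbitrarily small radius clustering near any prescribed target point. I expect the limit-set argument to be the cleanest route, with no real obstacle beyond correctly citing that Bianchi groups are lattices.
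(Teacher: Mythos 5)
Your argument is correct and is essentially the paper's own proof, which consists of the single observation that the orbit of any point under the extended Bianchi group is dense; you have simply supplied the standard limit-set justification for that fact. No substantive difference in approach.
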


\begin{proof}The orbit of any point under the extended Bianchi group is dense.\end{proof}

\begin{proposition}\label{prop:discrete}The angle between intersecting circles in $\cS_D$ is $\theta=\arccos(n/2D)$ for some $n\in\bZ$, where $n$ is even if $\Delta$ is. Moreover, the point(s) of intersection is/are in $\widehat{K}$ if and only if $e^{i\theta}\in K$.\end{proposition}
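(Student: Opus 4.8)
The plan is to prove the two claims by separate mechanisms: the angle quantization comes from a Gram-type computation against the defining lattice conditions, while the location of the intersection points is detected through the fixed points of a Möbius transformation that turns out to have coefficients in $K$.

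For the angle, I would write $\bv_C=[\zeta\ \overline\zeta\ \hat r\ r]$ and $\bv_{C'}=[\zeta'\ \overline{\zeta'}\ \hat r'\ r']$, so that Proposition \ref{prop:angle} gives $\cos\theta=\langle\bv_C,\bv_{C'}\rangle_Q=\Re(\overline\zeta\zeta')-\tfrac12(\hat r r'+r\hat r')$. Scaling by $2D$ and inserting $a:=i\sqrt D\,\zeta,\ a':=i\sqrt D\,\zeta'\in\cO$ together with $i\sqrt D\,\hat r=\sqrt\Delta\,s$, $i\sqrt D\,r=\sqrt\Delta\,t$ and their primed analogues (for integers $s,t,s',t'$, as guaranteed by Definition \ref{def:SD}) collapses the expression to $n:=2D\cos\theta=\tr(\overline a a')+\Delta(st'+ts')$. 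Both terms are rational integers, the first since $\overline a a'\in\cO$, so $\theta=\arccos(n/2D)$ with $n\in\bZ$. For the parity clause, an even $\Delta$ forces $\cO$ to have $\bZ$-basis $\{1,\sqrt\Delta/2\}$, whose members have trace $2$ and $0$; hence $\tr$ is even on all of $\cO$, and since $\Delta(st'+ts')$ is visibly even, $n$ is even.

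For the intersection points I would realize the involutions as Möbius data via Proposition \ref{prop:reflect}: reflection over $C$ sends $z\mapsto M_C\overline z$ with $M_C=\begin{bmatrix}\zeta&-\hat r\\ r&-\overline\zeta\end{bmatrix}$ and $\det M_C=-(|\zeta|^2-r\hat r)=-1$. Composing the reflections over $C$ and $C'$ yields the holomorphic map $z\mapsto Mz$, where $M:=M_C\overline{M_{C'}}\in\mathrm{SL}_2(\bC)$, whose fixed points are exactly the points of $C\cap C'$ (each reflection fixes them). Two features of $M$ then finish the argument. First, direct multiplication gives $\tr M=2\langle\bv_C,\bv_{C'}\rangle_Q=2\cos\theta$. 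Second, the same substitution as above shows every entry of $M$ lies in $K$: the diagonal entries are built from $a\overline{a'}/D\in K$ and $\Delta st'/D\in\bQ$, while each off-diagonal entry carries a single surviving factor $\sqrt\Delta\in K$. Consequently the fixed points are roots of a quadratic over $K$, so they lie in $\widehat K$ if and only if its discriminant $(\tr M)^2-4\det M=4\cos^2\theta-4=-4\sin^2\theta$ is a square in $K$; that square root is $2i\sin\theta$. Since $\cos\theta=n/2D\in\bQ\subseteq K$, having $i\sin\theta\in K$ is equivalent to $e^{i\theta}=\cos\theta+i\sin\theta\in K$, which is the claim.

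The two substitutions are routine; the one step demanding care is confirming that all entries of $M$ — especially the off-diagonal ones, where the factors $\sqrt D$ and $\sqrt\Delta$ must combine to leave only $\sqrt\Delta$ — genuinely lie in $K$, and this is exactly where the lattice $\cO^2\times\sqrt\Delta\bZ^2$ and the identity $\sqrt\Delta\in K$ are used. I would also dispatch the degenerate configurations: when $C$ and $C'$ are tangent the quadratic has a single root (still in $\widehat K$) and $e^{i\theta}=\pm1\in K$, and when the leading coefficient vanishes one fixed point is $\infty\in\widehat K$ while $e^{i\theta}$ is a diagonal entry of $M$, so the stated equivalence persists in every case.
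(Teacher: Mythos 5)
Your proof is correct, and while the first claim is handled exactly as in the paper (the Gram computation $2D\langle\bv_C,\bv_{C'}\rangle_Q=\tr(\overline{a}a')+\Delta(st'+ts')\in\bZ$, with the parity clause from evenness of the trace on $\cO$ when $4\,|\,\Delta$ --- you actually supply more detail here than the paper, which just asserts the half-integrality), your treatment of the second claim takes a genuinely different route. The paper first uses the $\text{PSL}_2(\cO)$-action to reduce to the case $r,r'\neq 0$ and then quotes an explicit formula for the intersection points, $(\zeta-\zeta'e^{\pm i\theta})/(r-r'e^{\pm i\theta})$, which after scaling numerator and denominator by $i\sqrt{D}$ has all ingredients in $K$ except possibly $e^{\pm i\theta}$; the equivalence follows by solving that relation for $e^{i\theta}$. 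You instead pass to the composition of the two inversions, $M=M_C\overline{M_{C'}}$, observe $\tr M=2\langle\bv_C,\bv_{C'}\rangle_Q$ and $\det M=1$, check that multiplying each entry by $D$ lands it in $\cO$ or $\sqrt{\Delta}\cO$, and read off the intersection points as roots of a quadratic over $K$ whose discriminant is $-4\sin^2\theta$. What your approach buys: it avoids both the reduction to nonzero curvatures and the unproved intersection-point formula, it handles lines and tangencies uniformly (as you note in your closing remarks on degenerate cases), and it reuses the very matrix the paper later computes in the proof of Lemma \ref{lem:supint}, making the trace identity do double duty; it also yields the slightly stronger structural fact that the composed transformation lies in $\text{PGL}_2(K)$. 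The paper's route is shorter on the page but leans on an external geometric formula. The one point you should state explicitly rather than leave implicit is that the fixed-point set of $M$ equals $C\cap C'$ only when $M\neq\text{id}$, i.e.\ when the circles do not coincide as unoriented circles --- the same caveat the paper makes with its parenthetical ``(but do not coincide).''
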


\begin{proof}If $i\sqrt{D}\bv_C,i\sqrt{D}\bv_{C'}\in\cO^2\times\sqrt{\Delta}\bZ^2$ then $\langle i\sqrt{D}\bv_C,i\sqrt{D}\bv_{C'}\rangle_Q=n/2$ for some $n\in\bZ$, where $n$ is even if $\Delta$ is. By Proposition \ref{prop:angle}, $C$ and $C'$ intersect if and only if $\langle\bv_C,\bv_{C'}\rangle_Q\in [-1,1]$, in which case the angle of intersection is $\arccos(n/2D)$.

For the second claim, suppose $C$ and $C'$ intersect (but do not coincide). Since $\text{PSL}_2(\cO)$ preserves $\cS_D$, intersection angles, and $\widehat{K}$, we may assume without loss of generality that $C$ and $C'$ have nonzero curvature $r$ and $r'$---if not replace them by $MC$ and $MC'$ for almost any choice of $M\in\text{PSL}_2(\cO)$. Let $\zeta$ and $\zeta'$ denote their curvature-centers. Then, with $\theta=\arccos(\langle\bv_C,\bv_{C'}\rangle_Q)\in[0,\pi]$, the point(s) of intersection is/are $(\zeta-\zeta'e^{\pm i\theta})/(r-r'e^{\pm i\theta})$. Scaling numerator and denominator by $i\sqrt{D}$ shows that intersections belongs to $K$ if and only if $e^{i\theta}$ does.\end{proof}

In $\cS_8$ for $\bQ(i\sqrt{31})$, shown in the first image of Figure \ref{fig:4}, there are four possible values of $\langle\bv_C,\bv_{C'}\rangle_Q$ that lie in $[-1,1]$. They are $\pm 1$, which correspond to tangential intersections, and $\pm 15/16$, which correspond to non-tangential intersections. Since $\arccos(\pm 1) = \text{arg}(\pm 1)$ and $\arccos(\pm 15/16) = \text{arg}((15\pm i\sqrt{31})/16)$, all points of intersection are in $\widehat{K}$ by Proposition \ref{prop:discrete}.

\subsection{A dichotomy}\label{ss:dich}This subsection splits our arrangements into two categories. The category to which $\cS_D$ belongs is determined by the Hilbert symbol $(D,\Delta)$. We modify usual notation to avoid confusion with ideals. 

\begin{notation}\label{not:hilbert}For $a,b\in\bQ$, let $H(a,b)\hspace{-0.02cm}=\hspace{-0.02cm}1$ if $ax^2+\hspace{0.03cm}by^2\hspace{-0.02cm}=\hspace{-0.02cm}z^2$ has a nonzero solution $x,y,z\in\bQ$, and let $H(a,b)=-1$ otherwise.\end{notation}

\begin{lemma}\label{lem:split}If any circle in $\cS_D$ contains a point in $\widehat{K}$ then $H(D,\Delta)=1$, and if $H(D,\Delta)=1$ then every circle in $\cS_D$ contains a point in $\widehat{K}$\end{lemma}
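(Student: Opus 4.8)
The plan is to prove something slightly stronger and symmetric: for a single circle $C\in\cS_D$ of nonzero curvature, $C$ meets $\widehat{K}$ \emph{if and only if} $H(D,\Delta)=1$. Both implications of the lemma then follow at once, because $\cS_D$ always contains circles of nonzero curvature (e.g.\ the one built in the proof of Proposition \ref{prop:nonempty}) and the condition $H(D,\Delta)=1$ will turn out not to depend on the chosen circle. Circles of zero curvature (lines) are handled separately and are easy: a line already passes through $\infty\in\widehat{K}$, and if $\cS_D$ contains a line then $r=0$ forces $|\alpha|^2=D$ with $\alpha=i\sqrt{D}\zeta\in\cO$, which exhibits a solution of $Dx^2+\Delta y^2=z^2$ (take $x=1$ and read off $y,z$ from $\alpha$), so $H(D,\Delta)=1$.

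First I would fix $C\in\cS_D$ with curvature $r\neq 0$ and record its arithmetic data. Writing $\alpha=i\sqrt{D}\zeta\in\cO$, $\ i\sqrt{D}\hat{r}=a\sqrt{\Delta}$, and $i\sqrt{D}r=b\sqrt{\Delta}$ with $a,b\in\bZ$ and $b\neq 0$, I would scale the identity $1=\|\bv_C\|_Q=|\zeta|^2-\hat{r}r$ by $D$ to obtain the normalization $|\alpha|^2-ab|\Delta|=D$ (the refinement of Proposition \ref{prop:nonempty} that keeps track of $a$ and $b$). Since $r\neq 0$, the point $\infty$ does not lie on $C$, so every candidate $K$-point is affine.

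Next I would parametrize affine $K$-points and substitute. A point of $K$ has the form $z=q_1+q_2\sqrt{\Delta}$ with $q_1,q_2\in\bQ$; writing $\alpha=\mu+\nu\sqrt{\Delta}$ with $\mu,\nu\in\bQ$ and multiplying equation (\ref{eq:1}) (with $\beta=1$) by the real number $\sqrt{D}$ turns the condition $z\in C$ into the single real equation $b\sqrt{|\Delta|}\,|z|^2-2\,\Im(\overline{z}\alpha)+a\sqrt{|\Delta|}=0$. Expanding $|z|^2=q_1^2+|\Delta|q_2^2$ and $\Im(\overline{z}\alpha)=\sqrt{|\Delta|}(\nu q_1-\mu q_2)$ and dividing by $\sqrt{|\Delta|}$ produces the conic over $\bQ$
\[bq_1^2+b|\Delta|q_2^2-2\nu q_1+2\mu q_2+a=0,\]
so $C$ meets $\widehat{K}$ exactly when this conic has a rational point.

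Finally I would complete the square (legitimate because $b\neq 0$) and invoke the normalization. Completing in $q_1$ and $q_2$ and clearing by the nonzero rational factor $b|\Delta|$, the constant term collapses, using $|\alpha|^2=\mu^2+|\Delta|\nu^2$ together with $|\alpha|^2-ab|\Delta|=D$, to exactly $D$; after absorbing the square factors $b^2$ and $(b|\Delta|)^2$ into the variables, the conic becomes $\bQ$-equivalent to
\[|\Delta|\,X^2+Y^2=D\,Z^2,\]
that is, to $Dx^2+\Delta y^2=z^2$ after renaming variables (recall $\Delta=-|\Delta|$). Since affine solubility of the conic matches isotropy of this ternary form, by definition a nonzero rational solution exists precisely when $H(D,\Delta)=1$, which completes the equivalence for $C$ and hence the lemma. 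The main thing to get right is the bookkeeping in this last step: tracking the real versus $\sqrt{|\Delta|}$ parts through the substitution, verifying that the circle-specific data $\alpha,a,b$ all disappear (the additive parts $\mu,\nu$ into the completion-of-square shifts, and $b$ into a square class), and confirming that the leftover constant is \emph{exactly} $D$. Once the constant is pinned to $D$ via $|\alpha|^2-ab|\Delta|=D$, the reduction to the defining conic of $H(D,\Delta)$ is immediate and manifestly independent of $C$, which is what makes the statement all-or-nothing across circles.
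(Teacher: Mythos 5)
Your proof is correct and is in substance the paper's own argument: completing the square in $q_1,q_2$ is exactly the change of variables $z\mapsto i\sqrt{D}rz-i\sqrt{D}\zeta$ that the paper uses to produce an element of $K$ of magnitude $\sqrt{D}$, and both proofs then reduce the existence of a $K$-point on $C$ to the solvability of $Dx^2+\Delta y^2=z^2$ (using $\Delta<0$ to rule out solutions at infinity). The only cosmetic difference is that you treat lines directly rather than invoking the $\text{PSL}_2(\cO)$ action to reduce to the case $r\neq 0$.
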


\begin{proof}Suppose $\alpha\in C\cap \widehat{K}$ for some $C\in\cS_D$ with curvature-center $\zeta$ and curvature $r$. Since $\text{PSL}_2(\cO)$ fixes $\widehat{K}$ and $\cS_D$, we may assume $r\neq 0$ without loss of generality (as in the previous proof). Then $|\alpha-\zeta/r|=1/|r|$, implying $i\sqrt{D}r\alpha-i\sqrt{D}\zeta$ is an element of $K$ with magnitude $\sqrt{D}$. Denote this element $(z+y\sqrt{\Delta})/x$ to see that $Dx^2+\Delta y^2=z^2$ is solvable. 

Now suppose $Dx^2+\Delta y^2=z^2$ for $x,y,z\in\bQ$, not all zero. Then $\Delta<0$ forces $x\neq 0$, so $\alpha=(z+y\sqrt{\Delta})/x\in K$ has magnitude $\sqrt{D}$. Thus if $C\in\cS_D$ has curvature-center $\zeta$ and curvature $r\neq 0$, then $\zeta/r+\alpha/i\sqrt{D}r\in C\cap \widehat{K}$.\end{proof}

The contrast between the two types of arrangements is displayed in Figure \ref{fig:1}, where $H(1,-19)=1$ and $H(24,-19)=-1$. By the lemma, $\cS_{24}$ avoids all points in $\bQ(i\sqrt{19})$. This makes it appear like the photographic negative of $\cS_1$, whose focal points are those in $\bQ(i\sqrt{19})$ (a consequence of Theorem \ref{thm:angles}).

We use $\widehat{\bR}$ to denote the extended real line with positive orientation, so its interior is the upper half-space. This way $\bv_{\widehat{\bR}}=[i\;-i\;\,0\;\,0]$. The following is proved in \cite{stange}.

\begin{lemma}\label{lem:stange}\emph{(Stange \cite{stange}) }For $M\in\emph{PSL}_2(\bC)$ with entries as in \emph{(\ref{eq:2})}, $M\widehat{\bR}$ has curvature-center $i(\alpha\overline{\delta}-\overline{\beta}\gamma)$, cocurvature $i(\alpha\overline{\gamma}-\overline{\alpha}\gamma)$, and curvature $i(\beta\overline{\delta}-\overline{\beta}\delta)$.\end{lemma}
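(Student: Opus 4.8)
The plan is to read this off directly from the spin homomorphism of Proposition \ref{prop:exiso}. Since $\widehat{\bR}$ carries the vector $\bv_{\widehat{\bR}}=[i\;\;-i\;\;0\;\;0]$ and Möbius action on circles corresponds to right-multiplication by $N$, I would simply compute $\bv_{M\widehat{\bR}}=\bv_{\widehat{\bR}}N$. Because the last two coordinates of $\bv_{\widehat{\bR}}$ vanish, only the first two rows of $N$ contribute, so the image vector equals $i$ times the difference of the first and second rows of $N$:
$$\bv_{M\widehat{\bR}}=i\bigl[\,\alpha\overline{\delta}-\overline{\beta}\gamma\;\;\;\;\beta\overline{\gamma}-\overline{\alpha}\delta\;\;\;\;\alpha\overline{\gamma}-\overline{\alpha}\gamma\;\;\;\;\beta\overline{\delta}-\overline{\beta}\delta\,\bigr].$$

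Matching this against the template $\bv_C=[\zeta\;\;\overline{\zeta}\;\;\hat{r}\;\;r]$ reads off the three claimed quantities at once: the curvature-center is $\zeta=i(\alpha\overline{\delta}-\overline{\beta}\gamma)$, the cocurvature is $\hat r=i(\alpha\overline{\gamma}-\overline{\alpha}\gamma)$, and the curvature is $r=i(\beta\overline{\delta}-\overline{\beta}\delta)$, exactly as stated.

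The only thing requiring care is a consistency check that the output is a genuine circle vector. I would verify that the second coordinate is the complex conjugate of the first, which follows since $\overline{i(\alpha\overline{\delta}-\overline{\beta}\gamma)}=i(\beta\overline{\gamma}-\overline{\alpha}\delta)$, and that the last two coordinates are real, which holds because $\alpha\overline{\gamma}-\overline{\alpha}\gamma=2i\,\Im(\alpha\overline{\gamma})$ and $\beta\overline{\delta}-\overline{\beta}\delta=2i\,\Im(\beta\overline{\delta})$ are purely imaginary, so multiplication by $i$ yields real numbers. This confirms $\hat r,r\in\bR$ and $\zeta\in\bC$ in the required positions. There is no substantive obstacle here: the content is entirely in the structure of $N$ already established in Proposition \ref{prop:exiso}, and the computation is a single matrix–vector product together with these sign and reality verifications. (Alternatively, one could bypass $N$ and compute the image line directly as a Möbius image of $\widehat{\bR}$, extracting its curvature and center from the defining equation \eqref{eq:1}, but the spin-homomorphism route is shorter and reuses machinery already in place.)
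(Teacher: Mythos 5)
Your proof is correct. The paper itself gives no argument for this lemma---it simply defers to \cite{stange}---but your derivation via the spin homomorphism is the natural one: since $\bv_{\widehat{\bR}}=[i\;-i\;\,0\;\,0]$ has vanishing last two coordinates, $\bv_{M\widehat{\bR}}=\bv_{\widehat{\bR}}N$ is indeed $i$ times the difference of the first two rows of $N$, and matching coordinates against $[\zeta\;\,\overline{\zeta}\;\,\hat{r}\;\,r]$ yields exactly the stated curvature-center, cocurvature, and curvature. Your reality and conjugacy checks on the resulting entries are also correct and worth including, since they confirm the output is a genuine circle vector.
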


\begin{notation}For $M\in\text{PGL}_2(K)$, let $(M)$ denote the ideal generated by its four entries, and let $\|(M)\|$ denote the ideal's norm.\end{notation}

\begin{lemma}\label{lem:coprime}Let $M\in\emph{PGL}_2(K)$ have entries as in \emph{(\ref{eq:2})}. For any $\lambda,\mu\in K$ we have $(\lambda,\mu)(\det M)/(M)\subseteq (\alpha\lambda + \gamma\mu,\beta\lambda + \delta\mu)\subseteq (\lambda,\mu)(M)$. In particular, if $N\in\emph{PSL}_2(\cO)$ then $(MN)=(M)=(NM)$.\end{lemma}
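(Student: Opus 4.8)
The plan is to read both containments off the single matrix identity $M^{\mathrm{adj}}M=(\det M)I$, where $M^{\mathrm{adj}}=\left[\begin{smallmatrix}\delta & -\gamma\\ -\beta & \alpha\end{smallmatrix}\right]$ is the adjugate, together with the defining property that an ideal product $IJ$ is generated by the pairwise products of generators of $I$ and $J$. Throughout I write $J=(\alpha\lambda+\gamma\mu,\ \beta\lambda+\delta\mu)$ and note that its two generators are exactly the entries of the column vector $M\left[\begin{smallmatrix}\lambda\\\mu\end{smallmatrix}\right]$. I would first observe that every quantity in the statement scales by $(c)$ under $M\mapsto cM$ (using $\det(cM)=c^2\det M$ and $(cM)=(c)(M)$), so the claim is well posed on $\mathrm{PGL}_2(K)$.

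For the right containment, each generator of $J$, say $\alpha\lambda+\gamma\mu$, is the sum of the products $\alpha\cdot\lambda$ and $\gamma\cdot\mu$; since $\alpha,\gamma$ are among the generators of $(M)$ and $\lambda,\mu$ generate $(\lambda,\mu)$, both products lie in $(\lambda,\mu)(M)$, and hence so does their sum. The same applies to $\beta\lambda+\delta\mu$, giving $J\subseteq(\lambda,\mu)(M)$. For the left containment I would left-multiply $M\left[\begin{smallmatrix}\lambda\\\mu\end{smallmatrix}\right]$ by $M^{\mathrm{adj}}$ to obtain $(\det M)\lambda=\delta(\alpha\lambda+\gamma\mu)-\gamma(\beta\lambda+\delta\mu)$ and $(\det M)\mu=-\beta(\alpha\lambda+\gamma\mu)+\alpha(\beta\lambda+\delta\mu)$. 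Each right-hand side is an $(M)$-linear combination of the generators of $J$, so $(\det M)\lambda,(\det M)\mu\in J(M)$; thus $(\lambda,\mu)(\det M)\subseteq J(M)$, and multiplying by the inverse fractional ideal $(M)^{-1}$ (legitimate since $\cO$ is Dedekind) yields $(\lambda,\mu)(\det M)/(M)\subseteq J$.

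For the ``in particular'' claim, let $N\in\mathrm{PSL}_2(\cO)$ have columns $\left[\begin{smallmatrix}a\\b\end{smallmatrix}\right]$ and $\left[\begin{smallmatrix}c\\d\end{smallmatrix}\right]$. The four entries of $MN$ are those of $M$ applied to these two columns, so applying the right containment to $(\lambda,\mu)=(a,b)$ and to $(c,d)$ and summing gives $(MN)\subseteq\big((a,b)+(c,d)\big)(M)=(N)(M)$. Since $\det N=1$, the relation $ad-bc=1$ shows $1\in(N)$, whence $(N)=\cO$ and $(MN)\subseteq(M)$. Applying this same inclusion to the pair $MN$ and $N^{-1}\in\mathrm{PSL}_2(\cO)$ gives $(M)=\big((MN)N^{-1}\big)\subseteq(MN)$, so $(MN)=(M)$. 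Finally $(NM)=(M)$ follows by transposing, since a transpose has the same entries: $(NM)=(M^{\mathsf T}N^{\mathsf T})=(M^{\mathsf T})=(M)$, using $N^{\mathsf T}\in\mathrm{PSL}_2(\cO)$ and $(M^{\mathsf T})=(M)$.

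I expect the only real subtlety, rather than a genuine obstacle, to be bookkeeping: keeping the fractional-ideal arithmetic honest (in particular that $(M)$ is invertible so it may be cancelled) and checking the two sign patterns in the adjugate computation. The structural content is entirely the identity $M^{\mathrm{adj}}M=(\det M)I$, which simultaneously produces the upper bound (reading $M$ as a linear map on columns) and the lower bound (reading $M^{\mathrm{adj}}$ as its scaled inverse).
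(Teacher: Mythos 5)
Your proof is correct and takes essentially the same approach as the paper: the right containment by expanding the generators of $(\alpha\lambda+\gamma\mu,\beta\lambda+\delta\mu)$, the left containment via the adjugate identity $\delta(\alpha\lambda+\gamma\mu)-\gamma(\beta\lambda+\delta\mu)=\lambda\det M$ (the paper writes exactly this elimination), and the ``in particular'' claim by applying the containments to the rows and columns of $M$ with $N$ in the role of the acting matrix. The only cosmetic difference is that you use $N^{-1}$ and transposition where the paper invokes both containments at once (using $(\det N)/(N)=\cO=(N)$) to see the row and column ideals of $M$ are preserved.
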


\begin{proof}Observe that $$(\lambda\det M,\mu\det M) = (\delta(\alpha\lambda + \gamma\mu)-\gamma(\beta\lambda + \delta\mu),\alpha(\beta\lambda+\delta\mu)-\beta(\alpha\lambda + \gamma\mu))\subseteq$$ $$(M)(\alpha\lambda + \gamma\mu,\beta\lambda + \delta\mu)\subseteq (\lambda,\mu)(M)^2.$$ Divide all sides by $(M)$ to get the desired containment.

To see that $(MN)=(M)$ for $N\in\text{PSL}_2(\cO)$, we will let $N$ play the role of ``$M$" from the previous argument. Let $\lambda$ and $\mu$ denote the top-row entries of $M$. Since $(\det N)/(N) = \cO = (N)$, we see that the ideal generated by the top-row entries of $MN$ is still $(\lambda,\mu)$. The ideal generated by bottom-row entries of $M$ is similarly preserved, giving $(MN)=(M)$. For $NM$ let $\lambda$ and $\mu$ be a column of $M$ instead.\end{proof}

\begin{theorem}\label{thm:redef}If $H(D,\Delta)=1$ then $C\in \cS_D$ if and only if $C=M\widehat{\bR}$ for some $M\in\emph{PGL}_2(K)$ with $|\det M|/\|(M)\|=\sqrt{D}$.\end{theorem}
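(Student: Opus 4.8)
The plan is to prove the two implications separately, dispatching the ``if'' direction by a direct computation with Lemma \ref{lem:stange} and reserving the real work for the ``only if'' direction, where the hypothesis $H(D,\Delta)=1$ enters through Lemma \ref{lem:split}.

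For the ``if'' direction, suppose $C=M\widehat\bR$ with $M\in\text{PGL}_2(K)$ and $|\det M|/\|(M)\|=\sqrt D$. I would first rescale $M$ by an element of $K^\times$ so that its entries lie in $\cO$; then $\fe:=(M)$ is an integral ideal and the hypothesis reads $|\det M|=\sqrt D\,\|\fe\|$. Normalizing the determinant to apply Lemma \ref{lem:stange} gives
\[
\bv_{M\widehat\bR}=\frac{1}{|\det M|}\left[\,iw_1\;\;\overline{iw_1}\;\;iw_2\;\;iw_3\,\right],\qquad w_1=\alpha\overline\delta-\overline\beta\gamma,\ \ w_2=\alpha\overline\gamma-\overline\alpha\gamma,\ \ w_3=\beta\overline\delta-\overline\beta\delta .
\]
Each $w_k\in\cO$ lies in $\fe\,\overline\fe=\|\fe\|\cO$, since the two factors in every product lie in $\fe$ and $\overline\fe$ respectively; moreover $w_2,w_3$ have the shape $z-\overline z$ and so lie in $\sqrt\Delta\,\bZ$. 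Substituting $|\det M|=\sqrt D\,\|\fe\|$ yields $i\sqrt D\,\bv_{M\widehat\bR}=\frac{1}{\|\fe\|}\left[\,-w_1\;\,-\overline{w_1}\;\,-w_2\;\,-w_3\,\right]$, and division by the rational integer $\|\fe\|$ keeps the first two coordinates in $\cO$ and the last two in $\sqrt\Delta\,\bZ$. Hence $M\widehat\bR\in\cS_D$. (Note this argument never needs $M$ primitive.)

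For the ``only if'' direction, let $C\in\cS_D$. Because $H(D,\Delta)=1$, Lemma \ref{lem:split} places a point of $\widehat K$ on $C$; parametrizing the $K$-points on a $K$-rational circle by the elements of $K$ whose absolute value equals the radius shows there are in fact infinitely many, so sending $0,1,\infty$ to three distinct ones produces $M\in\text{PGL}_2(K)$, which I may take integral, with $M\widehat\bR=C$. Writing $\bv_C=[\zeta\ \overline\zeta\ \hat r\ r]$ and setting $a=i\sqrt D\,\zeta\in\cO$, $\sqrt\Delta\,m=i\sqrt D\,\hat r$, $\sqrt\Delta\,n=i\sqrt D\,r$, the identity $\|\bv_C\|_Q=1$ rescales to the arithmetic backbone $|a|^2+\Delta mn=D$. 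The difficulty is that $M\widehat\bR=C$ pins $M$ down only up to right multiplication by $\text{PGL}_2(\bQ)$, and the invariant $|\det M|/\|(M)\|$ is \emph{not} constant on this coset: already for $C=\widehat\bR$ it runs through every positive integer, while $\widehat\bR\in\cS_D$ only for square $D$. So the task is to select the coset representative whose invariant equals exactly $\sqrt D$.

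I expect that selection to be the main obstacle. Comparing $\bv_C$ with the displayed formula, every integral $M$ realizing $C$ satisfies $t\,(w_1,w_2,w_3)=(a,\sqrt\Delta\,m,\sqrt\Delta\,n)$ with $t=\sqrt D/|\det M|$, and the goal $|\det M|/\|(M)\|=\sqrt D$ is equivalent to $t=1/\|(M)\|$. I would obtain a suitable $M$ by descent: the reflection matrix $M_0=\left[\begin{smallmatrix}\zeta&-\hat r\\ r&-\overline\zeta\end{smallmatrix}\right]$ of Proposition \ref{prop:reflect} satisfies $M_0\overline{M_0}=I$, and $i\sqrt D\,M_0=\left[\begin{smallmatrix}a&-\sqrt\Delta\,m\\ \sqrt\Delta\,n&\overline a\end{smallmatrix}\right]\in\text{Mat}_2(\cO)$ has determinant $|a|^2+\Delta mn=D$. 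The equation $M\overline M^{-1}=\lambda M_0$ (which forces $[M]\,\sigma_{\widehat\bR}\,[M]^{-1}=\sigma_C$, hence $M\widehat\bR=C$) is a Hilbert 90 cocycle once $\lambda$ is chosen of modulus $1$ with $\lambda M_0\in\text{GL}_2(K)$, and this is possible precisely because $D$ is the squared absolute value of an element of $K$, i.e.\ because $H(D,\Delta)=1$; Hilbert 90 then yields $M\in\text{GL}_2(K)$. Finally I would fix the invariant at $\sqrt D$ by adjusting $M$ inside its $\text{PGL}_2(\bQ)$-coset, checking prime by prime that the valuations of $\det M$ and of the content $(M)$ can be balanced so that $N(\det M)=D\,\|(M)\|^2$. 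This local balancing, controlled by $|a|^2+\Delta mn=D$, is the crux, since it is exactly what distinguishes the finitely many values of $|\det M|/\|(M)\|$ attained on the coset and identifies $\sqrt D$ among them.
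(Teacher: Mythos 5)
Your ``if'' direction is correct and is essentially the paper's argument: normalize $M$ to have entries in $\cO$, apply Lemma \ref{lem:stange}, and use $(M)\overline{(M)}=(\|(M)\|)$ to see that dividing by $\|(M)\|$ preserves integrality. (One small reordering: to land the last two coordinates in $\sqrt{\Delta}\bZ$ rather than merely $\frac{1}{2}\sqrt{\Delta}\bZ$ when $\Delta$ is even, first divide $z=\alpha\overline{\gamma}\in(M)\overline{(M)}$ by $\|(M)\|$ to get $z'\in\cO$, and then take $z'-\overline{z'}$.)

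The ``only if'' direction has a genuine gap, and you have located it yourself. Producing \emph{some} $M\in\text{PGL}_2(K)$ with $M\widehat{\bR}=C$ is the easy part (three $K$-points from Lemma \ref{lem:split} suffice; the Hilbert 90 detour through the reflection matrix is correct but buys nothing extra), and you rightly observe that $|\det M|/\|(M)\|$ --- equivalently the norm of the integral ideal $(\det M)/(M)^2$ --- is not constant on the coset $M\,\text{PGL}_2(\bQ)$. But ``checking prime by prime that the valuations of $\det M$ and of the content $(M)$ can be balanced'' is a statement of intent, not an argument: you never show that the value $\sqrt{D}$ is actually attained on the coset, and that is precisely the content of the theorem. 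The paper resolves this by an explicit construction rather than a local-global balancing: choose a split prime $\fp=(\alpha',p)$ in the ideal class of $(\alpha,1)$ with $p\nmid 2D$, move $\alpha$ to $\alpha'/p$ by some $N\in\text{PSL}_2(\cO)$, and write down $M'$ with first column $(\alpha',p)$ and second column determined by the curvature data of $NC$. The entire weight of the proof then sits in verifying $|\det M'|=\sqrt{D}\,p$ and, more delicately, $(M')=\fp$ exactly --- using that $\fp$ is split, that $p\nmid D$ (via the circle equation mod $p^2$ to rule out $p\mid i\sqrt{D}r$), and that $\det M'\in\fp^2$ forces $\gamma'\in\fp$. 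Your identity $|a|^2+\Delta mn=D$ is indeed the arithmetic input, but converting it into the existence of the right coset representative requires an argument of roughly this length; as written, the crux is missing.
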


\begin{proof}Let $M\in\text{PGL}_2(K)$ have entries as in (\ref{eq:2}), and suppose $|\det M|/\|(M)\|=\sqrt{D}$ . We claim $C=M\widehat{\bR}\in\cS_D$. Since $M/\sqrt{\det M}\in\text{PSL}_2(\bC)$, the formulas in Lemma \ref{lem:stange} show that $i\sqrt{D}\bv_C$ has entries $$i\sqrt{D}\zeta=i\sqrt{D}\left(\frac{i(\alpha\overline{\delta}-\overline{\beta}\gamma)}{|\det M|}\right)=\frac{i|\det M|}{\|(M)\|}\left(\frac{i(\alpha\overline{\delta}-\overline{\beta}\gamma)}{|\det M|}\right)=\frac{\overline{\beta}\gamma-\alpha\overline{\delta}}{\|(M)\|}\in\cO,$$ (and by similar arithmetic) $$i\sqrt{D}\hat{r}=\frac{\overline{\alpha}\gamma-\alpha\overline{\gamma}}{\|(M)\|}\in\sqrt{\Delta}\bZ,\hspace{0.5cm}\text{and}\hspace{0.5cm}i\sqrt{D}r=\frac{\overline{\beta}\delta-\beta\overline{\delta}}{\|(M)\|}\in\sqrt{\Delta}\bZ.$$ Thus $C\in\cS_D$ as claimed.

Now assume $H(D,\Delta)=1$. For $C\in\cS_D$ we seek $M\in\text{PGL}_2(K)$ with $M\widehat{\bR}=C$ and $|\det M|/\|(M)\|=\sqrt{D}$. By Lemma \ref{lem:split}, there exists $\alpha\in C\cap K$. Pick any split prime ideal $\fp$ that belongs to the same ideal class as $(\alpha,1)$ and does not contain $2D$. Let $p=\|\fp\|$, and pick $\alpha'\in\cO$ for which $(\alpha',p)=\fp$. Now take any $N\in\text{PSL}_2(\cO)$ such that $N(\alpha/1)=\alpha'/p$, which is possible since the ideal classes of $(\alpha,1)$ and $(\alpha',p)$ are equal. Our strategy is to find $M'\in\text{PGL}_2(K)$ with $|\det M'|/\|(M')\|=\sqrt{D}$ and $M'\widehat{\bR}=NC$. Then $M=N^{-1}M'$ is the desired matrix by Lemma \ref{lem:coprime}.

Let $\zeta$ and $r$ denote the curvature-center and curvature of $NC$. Fix any $\beta\in\fp$ with $\Im(\beta)=\sqrt{|\Delta|}/2$. Set $\gamma'=i\sqrt{D}\zeta+(i\sqrt{D}r/\sqrt{\Delta})(\alpha'\overline{\beta}/p)$ and $\delta'=(i\sqrt{D}r/\sqrt{\Delta})\beta$, both of which are in $\cO$. Define $$M'=\begin{bmatrix}\alpha' & \gamma'\\ p & \delta'\end{bmatrix}.$$ Substituting in the formulas for $\gamma'$ and $\delta'$ gives $$|\det M'|=\left|\frac{i\sqrt{D}r\alpha'(\beta-\overline{\beta})}{\sqrt{\Delta}}-i\sqrt{D}\zeta p\right|=\sqrt{D}|r|p\left|\frac{\alpha'}{p}-\frac{\zeta}{r}\right|=\sqrt{D}p,$$ where the last equality uses $\alpha'/p\in NC$. From here it is straightforward to verify that $i(\alpha'\overline{\delta'}-p\gamma')/|\det M'|=\zeta$ and $i(p\overline{\delta'}-p\delta')/|\det M'|=r$. So $M'\widehat{\bR}=NC$. 

The proof will be complete if we show $(M')=\fp$, since then $|\det M'|/\|(M')\|=|\sqrt{D}p|/\|\fp\|=\sqrt{D}$. To this end, first observe that $\alpha'\not\in\overline{\fp}$ because $\fp\neq\overline{\fp}$ and $p\nmid \alpha'$. Next we claim that $p\nmid i\sqrt{D}r$, which would imply $\delta'\not\in\overline{\fp}$ by choice of $\beta$. Indeed, if $p\,|\,i\sqrt{D}r$ then $\alpha'/p\in NC$ gives $$0=i\sqrt{D}r|\alpha'|^2 + 2\Im(\overline{\alpha'}p(i\sqrt{D}\zeta)) + i\sqrt{D}\hat{r}p^2\equiv 2\Im(\overline{\alpha'}p(i\sqrt{D}\zeta))\,\text{mod}\,p^2.$$ But then $i\sqrt{D}\zeta\in\fp$ is forced, in turn showing $p$ divides $|i\sqrt{D}\zeta|^2+(i\sqrt{D}\hat{r})(i\sqrt{D} r)=D$, contradicting our choice of $\fp$. So $\delta'\not\in\overline{\fp}$ as desired. Now, $\alpha'\delta'\not\in\overline{\fp}$ gives $\det M'=\alpha'\delta'-p\gamma'\not\in\overline{\fp}$. Combined with $p^2\,|\,|\det M'|^2$, this implies $\det M'\in\fp^2$. But then $\alpha',\delta'\in\fp$ and $(p,\fp^2)=\fp$ forces $\gamma'\in\fp$, completing the proof.\end{proof}

The matrices used in Theorem \ref{thm:redef}, $M\in\text{PGL}_2(K)$ with $|\det M|/\|(M)\|=\sqrt{D}$, form the extended Bianchi group when $D=1$ (though typically each matrix $M$ is scaled by $1/\sqrt{\det M}$ so as to obtain a subgroup of $\text{PSL}_2(\bC)$). In particular, Theorem \ref{thm:redef} asserts that the extended Bianchi group acts transitively on $\cS_1$. More generally, it can be shown as a corollary to the theorem that when $H(D,\Delta)=1$, the extended Bianchi group acts transitively on $\cS_D$ if and only if $D$ is square-free. The author believes the same is true when $H(D,\Delta)=-1$, but has not proved it.

\section{Relation to subarrangements found in literature}\label{sec:4}

\subsection{Universality} We would like to determine what kinds of arrangements can occur as subarrangements of some $\cS_D$. Integrality of curvatures (or cocurvatures or curvature-centers) up to a single scaling factor is evidently necessary, but not sufficient to guarantee containment in some $\cS_D$. A certain amount of symmetry is also required to avoid random collections of integral circles that are not found in any $\cS_D$. The Kapovich-Kontorovich ``subarithmeticity theorem" suggests what sufficient symmetry might be.

\begin{theorem}\label{thm:kap}\emph{(Special case of Kapovich-Kontorovich \cite{kapovich})} If the orbit of a circle under a Zariski dense subgroup $\Gamma<\emph{PSL}_2(\bC)$ is integral, then $\Gamma$ is contained in a group weakly commensuarable to some $\emph{PSL}_2(\cO)$.\end{theorem}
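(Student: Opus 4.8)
The plan is to transport the hypothesis into the Lorentzian picture of Section~\ref{sec:2} and extract arithmeticity there. Through the spin homomorphism of Proposition~\ref{prop:exiso}, the Zariski dense group $\Gamma$ maps to a Zariski dense subgroup $\widehat{\Gamma}\subset\text{O}(Q)\cong\text{O}(1,3)$, each circle $C$ corresponds to a unit spacelike vector $\bv_C$, and the curvature of $C$ is the value on $\bv_C$ of a single fixed linear functional, namely $\langle\,\cdot\,,\mathbf{w}_0\rangle_Q$ for the null vector $\mathbf{w}_0$ dual to the curvature coordinate. Since $\widehat{\Gamma}$ acts by $Q$-isometries, the curvature of $gC$ is the pairing of $\bv_C$ against a vector in the orbit $\widehat{\Gamma}\,\mathbf{w}_0$, so the hypothesis that $\Gamma C$ is integral says exactly that $\langle\bv_C,w\rangle_Q\in\bZ$ (after a single rescaling) for every $w$ in that orbit of null vectors.

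From here I would use Zariski density to promote this ``one integral coordinate'' into a full rational structure. Because the standard representation of $\text{O}(1,3)$ is irreducible, the $\bQ$-span of the orbit $\widehat{\Gamma}\,\mathbf{w}_0$ is a nonzero $\widehat{\Gamma}$-invariant subspace, hence the whole ambient space; selecting four orbit vectors that form a basis and combining their $Q$-Gram data with the integrality of $\langle\bv_C,\cdot\rangle_Q$ should pin down a quadratic form proportional to $Q$, defined over $\bQ$ and preserved by $\widehat{\Gamma}$. The aim is to produce a $\widehat{\Gamma}$-stable lattice $L$ on which $\widehat{\Gamma}$ acts integrally, placing $\widehat{\Gamma}$ inside the integral orthogonal group of a rational signature-$(1,3)$ form---an arithmetic group.

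The last step is to translate this back to $\text{PSL}_2$. Rational quadratic forms of signature $(1,3)$ are classified by their arithmetic invariants, and via the same exceptional isomorphism used throughout Section~\ref{sec:2} the orthogonal group of such a form is weakly commensurable to the image of a Bianchi group $\text{PSL}_2(\cO_K)$, with $K$ the imaginary quadratic field distinguished by the form; this mirrors the appearance of $\cO$ for $Q$ in Proposition~\ref{prop:symmetry} and Theorem~\ref{thm:redef}. Pulling back along the spin homomorphism then situates $\Gamma$ inside a group weakly commensurable to some $\text{PSL}_2(\cO)$.

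The main obstacle is the middle step. Controlling only the curvature functional across the orbit is far weaker than knowing the entire orbit lies in a lattice, and in general $\widehat{\Gamma}\,\mathbf{w}_0$ is dense rather than discrete, so the passage from one integral functional to a genuine $\widehat{\Gamma}$-invariant $\bQ$-structure is the crux. This is precisely the content of the Kapovich--Kontorovich subarithmeticity theorem \cite{kapovich}, so rather than reprove it I would derive the statement by specializing their general result to circles (the signature-$(1,3)$ case) and then carry out only the bookkeeping above to rephrase ``subarithmetic'' in the language of $\text{PSL}_2(\cO)$.
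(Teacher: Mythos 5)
The paper gives no proof of this statement at all: it is imported verbatim as a special case of the Kapovich--Kontorovich subarithmeticity theorem \cite{kapovich}, and your proposal ultimately does the same thing, correctly identifying that the passage from one integral functional on a (generally dense) orbit of null vectors to a $\widehat{\Gamma}$-invariant $\bQ$-structure is exactly the content of their theorem and deferring to it. Your surrounding bookkeeping in the Lorentzian picture (curvature as pairing against a null vector, the isomorphism between non-cocompact arithmetic lattices in $\text{O}(1,3)$ coming from isotropic rational forms and Bianchi groups) is a reasonable account of how the specialization is carried out, so this is essentially the same approach as the paper.
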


So Zariski denseness of $\Gamma$ is enough to associate an imaginary quadratic field to an integral orbit of a circle. Weak commensurability means there exists $M\in\text{PSL}_2(\bC)$ such that $M\Gamma M^{-1}\cap\text{PSL}_2(\cO)$ is a finite index subgroup of $M\Gamma M^{-1}$. Letting $C$ be a circle from our integral orbit, if it happens that $MC\in\cS_D$ for some $D$, then $(M\Gamma M^{-1}\cap\text{PSL}_2(\cO))MC\subseteq\cS_D$ by Proposition \ref{prop:symmetry}. But there seems no reason that this should be true, or that it should be true for any remaining circles in $M\Gamma C$. And even if all of $M\Gamma C$ is contained in some $\cS_D$, why should an integral arrangement composed of multiple orbits be contained in a single $\cS_D$? The next theorem addresses these concerns.

\begin{theorem}\label{thm:univ}Any integral arrangement that is fixed by a Zariski dense subgroup of $\emph{PSL}_2(\bC)$ is contained in some $\cS_D$ after scaling, rotating, and translating.\end{theorem}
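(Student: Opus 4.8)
The plan is to pass to the Minkowski model, where each oriented circle is its unit vector $\bv_C$, the spin homomorphism of Proposition \ref{prop:exiso} realizes $\Gamma$ as a Zariski dense subgroup $\overline{\Gamma}$ of the Lorentz group $\text{SO}(Q)$, and $\text{SO}(Q)$ acts irreducibly on the four-dimensional space $V$ of such vectors. The curvature of $C$ is, up to a fixed real scalar, the pairing $\langle\bv_C,\be_\infty\rangle_Q$ against the null vector $\be_\infty=[0\ 0\ 1\ 0]$ representing $\infty$; so ``integral'' means exactly that $\langle\bv_C,\be_\infty\rangle_Q\in c\bZ$ for a single $c>0$ and all $C\in\cA$. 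Since the affine maps are precisely the elements of $\text{PSL}_2(\bC)$ fixing $\infty$ — equivalently, those whose spin image fixes the line $\bR\be_\infty$ — the vector $\be_\infty$ is the central object, and the permission to only scale, rotate, and translate is the requirement that the normalizing transformation fix it.

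I would first feed $\Gamma$, together with one orbit of $\cA$, into Theorem \ref{thm:kap} to obtain the imaginary quadratic field $K$ and an $M_0\in\text{PSL}_2(\bC)$ with $M_0\Gamma M_0^{-1}\cap\text{PSL}_2(\cO)$ of finite index. Replacing $\Gamma$ by the matching finite-index subgroup — which still preserves the set $\cA$ and is still Zariski dense — records that $\overline{\Gamma}$ is arithmetic, stabilizing a lattice commensurable with the $\cO$-lattice of $K$. I would deliberately not move $\cA$ by $M_0$: a general M\"obius map ruins integrality of curvatures, so $M_0$ serves only to name the field and the commensurability class, while all metric information is kept in the original coordinates.

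The engine is a discreteness argument forced by the integral curvatures. Let $L_\infty$ be the additive subgroup of $V$ generated by the $\overline{\Gamma}$-orbit of $\be_\infty$; it is $\overline{\Gamma}$-invariant, and since $\overline{\Gamma}$ is Zariski dense and $\text{SO}(Q)$ acts irreducibly, this orbit spans $V$ and no proper nonzero $\overline{\Gamma}$-invariant subspace exists. Were $L_\infty$ not discrete, the maximal subspace in its closure would be such an invariant subspace and hence all of $V$, making $L_\infty$ dense; but for any $C_0\in\cA$ the functional $\langle\bv_{C_0},\cdot\rangle_Q$ is continuous and, being additive, takes values in $c\bZ$ on all of $L_\infty$, and a nonzero continuous functional cannot be $c\bZ$-valued on a dense set. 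Thus $L_\infty$ is a full lattice containing $\be_\infty$, and running the same pairing bound for every $C\in\cA$ shows that all circle vectors lie in one fixed lattice $\Lambda_0$ (a scalar multiple of $L_\infty^{\vee}$). A single lattice now holds the entire, possibly multi-orbit, arrangement, and $\infty$ is a $K$-rational point of this structure.

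It remains to realize $\Lambda_0$ affinely inside the lattice of Definition \ref{def:SD}. Because $M_0$ carries the rational structure $L_\infty\otimes\bQ$ onto the standard $K$-structure and $\be_\infty$ is rational in it, the image $M_0\infty$ is forced to be a point of $\widehat{K}$; composing $M_0$ with an element of $\text{PGL}_2(K)$ that sends $M_0\infty$ back to $\infty$ — possible since $\text{PGL}_2(K)$ acts transitively on $\widehat{K}$ — produces a transformation fixing $\infty$ and carrying the $K$-structure of $\cA$ to the standard one, hence an affine map $z\mapsto az+b$ with $a,b\in K$. After applying it the circle vectors are $K$-rational in the $\sqrt{\Delta}$-twisted circle structure, and I would finish by clearing a single common denominator: enlarging to a suitable $D$ via the inclusions $\cS_D\subseteq\cS_{k^2D}$ together with the congruence criterion of Proposition \ref{prop:nonempty} forces $i\sqrt{D}\,\bv_C\in\cO^2\times\sqrt{\Delta}\bZ^2$ simultaneously for every circle. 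I expect this last reconciliation — producing one $D$ that absorbs the finite-index passage, the commensurability defect, and the mismatch between the point-structure in which $\be_\infty$ lives and the twisted circle-structure of Definition \ref{def:SD}, all while keeping the normalizing map affine — to be the main obstacle, and it is precisely the author's stated worry about why a single $\cS_D$ should contain a multi-orbit arrangement.
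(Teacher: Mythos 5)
Your first half is sound and is a genuinely more structural route to the paper's key intermediate step. Where the paper fixes a single $M_0$ with $\Im(\tr\,M_0^2)\neq 0$, takes the four elements $M_0^{-2},M_0^{-1},I,M_0$, and inverts the explicit matrix $R_0$ sending $\bv_C$ to the curvatures of $M_0^nC$, you argue abstractly: the additive group $L_\infty$ generated by the $\overline{\Gamma}$-orbit of $\be_\infty$ is invariant, spans $V$ by Zariski density and irreducibility, and cannot fail to be discrete because the pairing $\langle\bv_{C_0},\cdot\rangle_Q$ is a nonzero continuous functional taking values in $c\bZ$ on it; dualizing puts every $\bv_C$ in one fixed lattice $\Lambda_0$. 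That argument is correct and disposes of the multi-orbit worry cleanly.

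But the theorem is not that the $\bv_C$ lie in \emph{some} lattice; it is that an affine map carries that lattice into the very particular lattice $\frac{1}{i\sqrt{D}}(\cO^2\times\sqrt{\Delta}\bZ^2)$, whose last two coordinates of $\bv_C$ must be integer multiples of the real number $\sqrt{|\Delta|/D}$ while the curvature-center must lie in $\cO/i\sqrt{D}$, with a single $D$ making both happen at once. Your final paragraph asserts rather than proves each step of this reconciliation: (i) the claim that $M_0\infty\in\widehat{K}$ rests on the unestablished assertion that the spin image of $M_0$ matches $L_\infty\otimes\bQ$ with the standard $K$-structure --- you only know the conjugated group preserves both lattices, and identifying two invariant $\bQ$-structures of an irreducible representation, even up to a scalar, requires an argument; (ii) ``the circle vectors are $K$-rational in the $\sqrt{\Delta}$-twisted structure'' is precisely the statement to be proved, and $K$-rationality alone is not enough: the curvature and cocurvature slots must become rational multiples of $\sqrt{|\Delta|/D}$ under the \emph{same} dilation that puts curvature-centers in $\cO/i\sqrt{D}$ with bounded denominators, which is exactly why the paper's proof chooses the translation $-\alpha/\beta$ and the dilation $t\beta$ with $t=(\tau^2-\overline{\tau^2})/\sqrt{\Delta}$, sets $D=|t\beta\sqrt{\Delta}|^2$, and computes the columns of $R_0^{-1}$ explicitly to see entries in $\cO/\sqrt{\Delta}$ and $t^2\bZ$; and (iii) passing from $\cS_D$ to $\cS_{k^2D}$ clears a uniform bounded denominator but cannot repair a coordinate that is not of the correct arithmetic type to begin with. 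You yourself flag this reconciliation as ``the main obstacle''; it is in fact the substance of the theorem, so as written the proposal has a genuine gap at its conclusion.
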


\begin{proof}Let $\cA$ denote the arrangement, scaled to have curvatures in $\bZ$, and let $\Gamma$ be the maximal subgroup of $\text{PSL}_2(\bC)$ that fixes $\cA$. Theorem \ref{thm:kap} gives a finite index subgroup $\Gamma_0\leq \Gamma$ that is, up to conjugation, contained in some $\text{PSL}_2(\cO)$. Since conjugation preserves traces, all traces from $\Gamma_0$ belong to $\cO$. Moreover, $\Gamma_0$ has finite index in a Zariski dense group and is thus Zariski dense itself. In particular, the invariant trace field of $\Gamma_0$, which is generated over $\bQ$ by $\tr\,M^2$ for $M\in\Gamma_0$, must be nonreal \cite{maclachlan}. So fix some $M_0\in\Gamma_0$ with $\Im(\tr\,M_0^2)\neq 0$. Let $\tau=\tr\,M_0\in\cO$ and set $t=(\tau^2-\overline{\tau^2})/\sqrt{\Delta}=2\Im(\tr\,M_0^2)/\sqrt{|\Delta|}\in\bZ$.

Let $\alpha$, $\beta$, $\gamma$, and $\delta$ be the entries of $M_0$ as in (\ref{eq:2}). Suppose for a contradiction that $\beta = 0$. Since $\Gamma\cA=\cA$ must be Zariski dense in $\text{PSL}_2(\bC)\cA$, there is some $C\in \cA$ with nonzero curvature $r$. From the fourth column of $N$ in (\ref{eq:2}) with $\beta$ set to $0$, we see that the curvature of $M_0^nC$ for $n\in\bZ$ is $|\delta|^{2n}r$, which is assumed to be integral for any $n$. Thus $|\delta| = 1$, which combines with $1 = \det M = \alpha\delta$ to give $\alpha=\overline{\delta}$. Finally, $\tr\,M_0^2=\alpha^2 + \delta^2 = \overline{\delta^2}+\delta^2$ contradicts $\Im(\tr\,M_0^2)\neq 0$. Thus $\beta\neq 0$.

We claim that after translating $\cA$ by $-\alpha/\beta$ and rotating/scaling by $t\beta$, it is contained in $\cS_D$ with \begin{equation}\label{eq:3}D=\big|t\beta\sqrt{\Delta}\big|^2.\end{equation} First observe that the new copy of $\cA$ has curvatures in $\bZ/|t\beta|$, and its symmetry group contains \begin{equation}\label{eq:4}\begin{bmatrix}t\beta & 0 \\ 0 & 1\end{bmatrix}\begin{bmatrix}1 & -\alpha/\beta \\ 0 & 1\end{bmatrix}M_0\begin{bmatrix}1 & \alpha/\beta \\ 0 & 1\end{bmatrix}\begin{bmatrix}1/t\beta & 0 \\ 0 & 1\end{bmatrix}=\begin{bmatrix}0 & -t \\ 1/t  & \tau\end{bmatrix}.\end{equation} Relabel $M_0$ as the last matrix above. 

Let $R_0$ denote the unique matrix satisfying $\bv_CR_0=[r_{-2}\;\,r_{-1}\;\,r_0\;\,r_1]$ for any oriented circle $C$, where $r_n$ denotes the curvature of $M_0^nC$. The $n^{\text{th}}$ column of $R_0$ can be found by using the bottom two entries of $M_0^{n-3}$ instead of $\beta$ and $\delta$ in the fourth column of $N$ from (\ref{eq:2}). This gives  $$R_0=\begin{bmatrix}\tau/t & 0 & 0 & \overline{\tau}/t\\ \overline{\tau}/t & 0 & 0 & \tau/t \\ |\tau/t|^2 & 1/t^2 & 0 & 1/t^2\\ 1 & 0 & 1 & |\tau|^2\end{bmatrix}.$$ We have $\det R_0=(\tau^2-\overline{\tau^2})/t^4=\sqrt{\Delta}/t^3\neq 0$. Letting $\sigma=\overline{\tau}(1-\tau^2)\in\cO$, the first and third columns of $R_0^{-1}$ are $$\begin{bmatrix}\tau/\sqrt{\Delta}\\ \sigma/\sqrt{\Delta} \\ -\overline{\sigma}/\sqrt{\Delta} \\ -\overline{\tau}/\sqrt{\Delta}\end{bmatrix}\hspace{0.5cm}\text{and}\hspace{0.5cm}\begin{bmatrix}0\\ t^2 \\ 0 \\ 0\end{bmatrix}.$$ 

Recall that $r_n\in\bZ/|t\beta|=\sqrt{\Delta}\bZ/i\sqrt{D}$ for any $C\in\cA$. So it follows from $\bv_C=[r_{-2}\;\,r_{-1}\;\,r_0\;\,r_1]R_0^{-1}$ that $C$ has curvature-center $(\tau r_{-2}+\sigma r_{-1}-\overline{\sigma}r_0-\overline{\tau}r_1)/\sqrt{\Delta}\in\cO/i\sqrt{D}$ and cocurvature $t^2r_{-1}\in\sqrt{\Delta}\bZ/i\sqrt{D}$. Thus $i\sqrt{D}\bv_C\in\cO^2\times\sqrt{\Delta}\bZ^2$.\end{proof}

The conjugating matrix product in (\ref{eq:4}) works as ``$M$" from the discussion immediately preceding the theorem. That is, our affine transformation $M$ gives $M\Gamma M^{-1}\cap\text{PSL}_2(\cO)$ finite index in $M\Gamma M^{-1}$. We do not need this fact, so we do not prove it.

In \cite{kapovich}, Kapovich and Kontorovich introduce \textit{bugs} as a generalization of circle packings. A bug is a set of oriented circles in which overlapping interiors must intersect at angle $\pi/n$ for $n$ belonging to some finite subset of $\bN$.

\begin{corollary}\label{cor:angles}If $\cA$ is integral with Zariski dense symmetry group, the angle $\theta$ between two intersecting circles of $\cA$ satisfies $\cos(\theta)\in\bQ$. In particular, if $\cA$ is a bug then $\theta$ is either $0$, $\pm \pi/3$, or $\pi/2$.\end{corollary}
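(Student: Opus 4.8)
The plan is to derive both assertions from Theorem~\ref{thm:univ} and Proposition~\ref{prop:discrete}, with a classical rationality result supplying the last step of the bug case.

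For the first claim, I would apply Theorem~\ref{thm:univ} to realize $\cA$, after scaling, rotating, and translating, as a subarrangement of some $\cS_D$. The point to note is that each of these three operations is a M\"obius transformation: translation $z\mapsto z+b$ and scaling/rotation $z\mapsto az$ both lie in $\text{PSL}_2(\bC)$. By the spin homomorphism of Proposition~\ref{prop:exiso}, the associated matrix $N$ is a Lorentz transformation, so $\langle\bv_{MC},\bv_{MC'}\rangle_Q=\langle\bv_C,\bv_{C'}\rangle_Q$; hence the intersection angle is preserved exactly. Thus the angle $\theta$ between two intersecting circles of $\cA$ equals the angle between their images in $\cS_D$, and Proposition~\ref{prop:discrete} gives $\cos\theta=n/2D$ for some $n\in\bZ$. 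Since $D$ is a positive integer by Proposition~\ref{prop:nonempty}, this yields $\cos\theta\in\bQ$.

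For the bug case, the defining property forces any non-tangential crossing to occur at an angle $\theta=\pi/n$ with $n$ in a finite subset of $\bN$; in particular $\theta$ is a rational multiple of $\pi$. Combining this with $\cos\theta\in\bQ$ from the first part, Niven's theorem restricts the possibilities to $\cos\theta\in\{0,\pm\tfrac12,\pm1\}$, i.e.\ $\theta\in\{0,\pi/3,\pi/2,2\pi/3,\pi\}$. Intersecting with the constraint $\theta=\pi/n$ leaves $n\in\{1,2,3\}$, so the only crossings available to a bug are tangency and intersection at $\pi/3$ or $\pi/2$; up to the orientation ambiguity this is the stated list $0,\pm\pi/3,\pi/2$.

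I expect the only substantive ingredient to be Niven's theorem; everything else is an application of results already in hand. The one point that requires care is the orientation convention. The pairing $\langle\bv_C,\bv_{C'}\rangle_Q$ records a \emph{signed} cosine, and reversing the orientation of one circle sends $\bv_{C'}\mapsto-\bv_{C'}$, hence $\theta\mapsto\pi-\theta$. Tracking this ambiguity is precisely what collapses the tangencies $\{0,\pi\}$ to a single value $0$ and pairs the crossings $\{\pi/3,2\pi/3\}$ into $\pm\pi/3$, matching the conclusion; so the bookkeeping, rather than any new estimate, is the delicate part.
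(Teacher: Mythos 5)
Your proposal is correct and follows the paper's own proof exactly: Theorem \ref{thm:univ} places $\cA$ in some $\cS_D$, Proposition \ref{prop:discrete} gives $\cos\theta = n/2D \in \bQ$, and Niven's theorem handles the bug case. The extra care you take with angle-preservation under the affine normalization and with the orientation bookkeeping is sound but not a departure from the paper's argument.
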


\begin{proof}The first claim is a combination of Proposition \ref{prop:discrete} and Theorem \ref{thm:univ}. The claim about bugs is Niven's theorem \cite{niven}: $\arccos(\pi/n)\in\bQ$ only if $n\in\{1,2,3\}$.\end{proof}

As the remainder of this section shows, actually identifying an arrangement $\cA$ within some $\cS_D$ is more useful than just knowing it is possible. Note that the proof of Theorem \ref{thm:univ} gives a formula for $D$ in (\ref{eq:3}). 

\subsection{Superintegrality}Our first property of subarrangements of $\cS_D$ applies when $D$ divides $\Delta$. The reader may recall from the introduction that this happens to be true of many, perhaps most, arrangements already appearing in literature.

The Apollonian supergroup is introduced in \cite{graham} by Graham, Lagarias, Mallows, Wilks, and Yan. Building on their idea, Kontorovich and Nakamura consider the supergroup more generally. We use their definitions \cite{nakamura}.

\begin{definition}\label{def:super}The \textit{supergroup} of an arrangement $\cA$ is generated by reflections over its circles as well as the matrices in $\text{PSL}_2(\bC)$ that fix $\cA$. We call $\cA$ \textit{superintegral} if the orbit under its supergroup is integral.\end{definition}

Superintegrality is used in \cite{chait} and \cite{nakamura} to help classify crystallographic circle packings, and again in \cite{kapovich} to help classify Kleinian circle packings and bugs. 

\begin{lemma}\label{lem:supint}If $\cA$ is integral with Zariski dense supergroup, then it is superintegral if and only if $2\langle \bv_C,\bv_{C'}\rangle_Q\in\bZ$ for all $C,C'\in\cA$. In this case, each intersection angle is either $0$, $\pm \pi/3$, or $\pi/2$.\end{lemma}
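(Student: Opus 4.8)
The statement has three parts: the equivalence's two directions, plus the final sentence, and I would dispose of the last one immediately. If $2\langle\bv_C,\bv_{C'}\rangle_Q\in\bZ$ and $C,C'$ intersect, then $\langle\bv_C,\bv_{C'}\rangle_Q\in[-1,1]$ by Proposition \ref{prop:angle}, so $2\langle\bv_C,\bv_{C'}\rangle_Q\in\{-2,-1,0,1,2\}$ and the intersection angle $\arccos\langle\bv_C,\bv_{C'}\rangle_Q$ lies in $\{0,\pi/3,\pi/2,2\pi/3,\pi\}$, which up to orientation is $0$, $\pm\pi/3$, or $\pi/2$. Everything thus reduces to the biconditional.

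For the backward direction the plan is to first discard the M\"obius part of the supergroup. Writing $R=\langle\rho_C:C\in\cA\rangle$ for the group generated by reflections and $H$ for the M\"obius symmetries of $\cA$, the conjugation relation $M\rho_CM^{-1}=\rho_{MC}$ with $MC\in\cA$ shows $H$ normalizes $R$; since $H$ fixes $\cA$, the supergroup orbit collapses to $G\cdot\cA=R\cdot\cA$. I would then run two inductions on reflection word length. Using the identity $2\langle\bv_{\rho_{C'}X},\bv_C\rangle_Q=2\langle\bv_X,\bv_C\rangle_Q-2\langle\bv_{C'},\bv_X\rangle_Q\cdot 2\langle\bv_{C'},\bv_C\rangle_Q$, read off from Proposition \ref{prop:reflect}, the first induction gives $2\langle\bv_C,\bv_X\rangle_Q\in\bZ$ for all $C\in\cA$ and all $X$ in the orbit. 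The curvature formula $r_{\rho_{C'}X}=r_X-2\langle\bv_{C'},\bv_X\rangle_Q\,r_{C'}$ then feeds a second induction: once the curvatures of $\cA$ are scaled into $\bZ$, every curvature in the orbit is forced integral. That is superintegrality.

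For the forward direction I would pass into some $\cS_D$. The orientation-preserving part $G^+$ has index at most $2$ in the Zariski dense supergroup, hence is itself Zariski dense, and it fixes the (now assumed integral) orbit $\cB=G\cdot\cA$; Theorem \ref{thm:univ} then places $\cB$, after an affine map, inside some $\cS_D$. Fix $C,C'\in\cA$ with $\bv_{C'}\neq\pm\bv_C$ (the excluded cases give $2\langle\bv_C,\bv_{C'}\rangle_Q=\pm2$), set $c=2\langle\bv_C,\bv_{C'}\rangle_Q$, and consider the dihedral orbit $C_k=(\rho_C\rho_{C'})^kC'\subseteq\cB\subseteq\cS_D$. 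The rotation $\rho_C\rho_{C'}$ preserves $\mathrm{span}(\bv_C,\bv_{C'})$ with determinant $1$ and trace $c^2-2$, so Cayley--Hamilton yields the vector recurrence $\bv_{C_{k+1}}=(c^2-2)\bv_{C_k}-\bv_{C_{k-1}}$, and Proposition \ref{prop:discrete} gives $c=n/D\in\bQ$, say $c=s/t$ in lowest terms.

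The crux, which I expect to be the \textbf{main obstacle}, is to turn this into $t=1$ by infinite descent. Each coordinate of $i\sqrt D\,\bv_{C_k}$ is an integer (an element of $\cO$ or of $\sqrt\Delta\bZ$), and the recurrence reads $x_{k+1}+x_{k-1}=\frac{s^2-2t^2}{t^2}\,x_k$ on each such sequence $(x_k)$. Since $\gcd(s^2-2t^2,t^2)=1$, integrality forces $t^2\mid x_k$ for all $k$; as $(x_k/t^2)$ obeys the same recurrence this iterates to $t^{2j}\mid x_k$, so $x_k\equiv 0$ unless $t=1$. Because $\bv_{C'}\neq 0$, some coordinate sequence is not identically zero, giving $t=1$ and $c\in\bZ$. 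The subtle point here is that one must use \emph{all four} lattice coordinates rather than curvature alone: along a pencil of lines through a common point the curvatures vanish identically, and it is precisely the embedding into $\cS_D$ that makes the remaining coordinates integral while keeping the entire dihedral orbit in the lattice. Notably this descent needs neither Niven's theorem nor a finite/infinite orbit split, since it eliminates every non-crystallographic angle uniformly.
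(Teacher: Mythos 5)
Your proposal is correct, and while your backward direction is essentially the paper's argument (the paper phrases it as: reflections over circles of $\cA$ preserve the lattice generated by the vectors $\bv_C$, hence so does the whole supergroup's image under the spin homomorphism, so fourth coordinates stay integral --- your two inductions on word length are exactly this, unrolled, with the same normalization observation $M\rho_CM^{-1}=\rho_{MC}$ left implicit), your forward direction takes a genuinely different route. The paper writes the composition of the two reflections as an explicit M\"obius transformation $N$ via Proposition \ref{prop:reflect}, observes $\tr N=2\langle\bv_C,\bv_{C'}\rangle_Q$, and invokes Theorem \ref{thm:kap} to conjugate a power $N^n$ into $\mathrm{PSL}_2(\cO)$, so that $\tr N^n\in\cO$ and the monic trace-polynomial relation forces $\tr N$ to be an algebraic integer, hence in $\bZ$. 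You instead feed the whole superintegral orbit $\cB$ through Theorem \ref{thm:univ} to land in a lattice $\cO^2\times\sqrt{\Delta}\bZ^2$, extract rationality of $c=2\langle\bv_C,\bv_{C'}\rangle_Q$ from the lattice pairing, and clear the denominator by descent along the three-term recurrence for the dihedral orbit. Both work; yours is heavier on the front end (it routes through the full universality theorem rather than just subarithmeticity) but the payoff is that integrality of the trace drops out of elementary divisibility rather than an algebraic-integrality argument --- in particular you never need to worry about whether the real algebraic integer $\tr N$ actually lies in $\bQ$, a point the paper's one-line conclusion $\tr N\in\cO\cap\bR=\bZ$ glosses over. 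Two small housekeeping notes: Proposition \ref{prop:discrete} as stated gives $\langle\bv_C,\bv_{C'}\rangle_Q\in\tfrac{1}{2D}\bZ$ only for intersecting circles, so for non-intersecting pairs you should cite the one-line lattice computation from its proof rather than the proposition itself; and your Cayley--Hamilton step should note that it needs only linear independence of $\bv_C,\bv_{C'}$ (not nondegeneracy of $Q$ on their span), which you have already arranged by setting aside $\bv_{C'}=\pm\bv_C$.
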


\begin{proof}Given oriented circles $C$ and $C'$, the reflection of $C'$ over $C$ corresponds to the vector $\bv_{C'}-2\langle\bv_C,\bv_{C'}\rangle_Q \bv_C$ by to Proposition \ref{prop:reflect}. Thus if $2\langle\bv_C,\bv_{C'}\rangle_Q\in\bZ$ for all $C,C'\in\cA$, reflections over circles from $\cA$ preserve the lattice generated by their vectors. This lattice is thus preserved by the supergroup's image under the spin homomorphism, implying curvatures in the orbit of the supergroup remain integral.

Conversely, suppose $\cA$ is superintegral. Let $M,M'\in\text{PSL}_2(\bC)$ be such that $C=M\widehat{\bR}$ and $C'=M'\widehat{\bR}$ belong to $\cA$. Let Let $\bv_C=[\zeta\;\,\overline{\zeta}\;\,\hat{r}\;\,r]$ and $\bv_{C'}=[\zeta'\;\,\overline{\zeta'}\;\,\hat{r}'\;\,r']$. By Proposition \ref{prop:reflect}, composing the reflection over $C$ followed by the reflection over $C'$ is given as a M\"{o}bius transformation by $$\begin{bmatrix}\zeta' & -\hat{r}' \\ r' & -\overline{\zeta'}\end{bmatrix}\begin{bmatrix}\overline{\zeta} & -\hat{r} \\ r & -\zeta\end{bmatrix}=\begin{bmatrix}\overline{\zeta}\zeta'-r\hat{r}' & \hat{r}'\zeta-\hat{r}\zeta' \\ r'\overline{\zeta}-r\overline{\zeta'} & \zeta\overline{\zeta'}-\hat{r}r'\end{bmatrix}.$$ Call the last matrix $N$, and observe that $\tr\,N=2\langle \bv_C,\bv_{C'}\rangle_Q$. Theorem \ref{thm:kap} says that some power of $N$, say $N^n$, is contained in $\text{PSL}_2(\cO)$ up to conjugation. Therefore $\tr\,N^n\in\cO$ because conjugation preserves traces. But $\tr\,N^n$ can be expressed as a monic, integral polynomial in $\tr\,N$, so we must have $\tr\,N\in\cO\cap\bR=\bZ$.

The final assertion about intersection angles follows from Proposition \ref{prop:angle}.\end{proof}

\begin{proposition}\label{prop:supint}If $D\,|\,\Delta$, all subsets of $\cS_D$ are superintegral.\end{proposition}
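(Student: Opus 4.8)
The plan is to reduce the statement to the lattice criterion of Lemma \ref{lem:supint} and then verify that criterion by a direct computation whose only substantial ingredient is the ramification of the primes dividing $\Delta$.

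First I would record that every $C\in\cS_D$ has curvature in $\sqrt{|\Delta|/D}\,\bZ$, so any subset $\cA\subseteq\cS_D$ is integral up to the single scaling factor $\sqrt{|\Delta|/D}$. The sufficiency direction of Lemma \ref{lem:supint}---that $2\langle\bv_C,\bv_{C'}\rangle_Q\in\bZ$ for all $C,C'\in\cA$ forces the lattice generated by $\{\bv_C\}$ to be preserved by every reflection and every fixing matrix, hence makes $\cA$ superintegral---uses only integrality, not Zariski density of the supergroup. So it is enough to prove
\[
2\langle\bv_C,\bv_{C'}\rangle_Q\in\bZ\qquad\text{for every pair }C,C'\in\cS_D,
\]
since this then passes to all subsets.

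Next I would set up the arithmetic. Writing $a=i\sqrt{D}\zeta\in\cO$ together with $i\sqrt{D}\hat r=\sqrt{\Delta}\,m_1$ and $i\sqrt{D}r=\sqrt{\Delta}\,m_2$ for $m_1,m_2\in\bZ$ (these memberships are exactly $C\in\cS_D$), and analogously $a',m_1',m_2'$ for $C'$, a direct expansion of the Hermitian form gives
\[
2D\langle\bv_C,\bv_{C'}\rangle_Q=2\langle i\sqrt{D}\bv_C,i\sqrt{D}\bv_{C'}\rangle_Q=\tr(a\overline{a'})+\Delta(m_1m_2'+m_2m_1').
\]
Call the right-hand side $n$; it is an integer (cf.\ Proposition \ref{prop:discrete}), and the goal is $D\mid n$. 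Since $D\mid\Delta$ annihilates the second term modulo $D$, I am reduced to showing $D\mid\tr(a\overline{a'})$. The available input is the norm condition coming from $\|i\sqrt{D}\bv_C\|_Q=D$, namely $|a|^2=D-\Delta m_1m_2\equiv 0\pmod D$, and likewise $D\mid|a'|^2$.

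The heart of the argument, and the step I expect to be the main obstacle, is converting the norm divisibilities $D\mid|a|^2$, $D\mid|a'|^2$ into the element divisibility $D\mid a\overline{a'}$ in $\cO$. Here I would use that $D\mid\Delta$ makes every prime $p\mid D$ a divisor of the field discriminant, hence ramified: $p\cO=\fp^2$ with $\fp=\overline{\fp}$ and $\|\fp\|=p$. Consequently there is a self-conjugate ideal $\fb$ with $\fb^2=D\cO$ and $\|\fb\|=D$. Because $\fp$ is the unique prime over $p$, the $p$-part of $|a|^2=\|(a)\|$ is precisely $\|\fp\|^{v_\fp(a)}$, so $D\mid|a|^2$ forces $v_\fp(a)\ge v_p(D)$ for each $p\mid D$, i.e.\ $\fb\mid(a)$; likewise $\fb\mid(a')$, whence $\overline{a'}\in\overline{\fb}=\fb$. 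Therefore $a\overline{a'}\in\fb\cdot\fb=\fb^2=D\cO$, so $D\mid a\overline{a'}$ in $\cO$ and thus $D\mid\tr(a\overline{a'})$ in $\bZ$. Combining, $D\mid n$, so $2\langle\bv_C,\bv_{C'}\rangle_Q\in\bZ$ for every pair and every subset of $\cS_D$ is superintegral. The only delicate point to check is the prime $2$, which can divide $D$ to a power exceeding $1$; but the ramified factorization $2\cO=\fp^2$ and the identity $v_2(|a|^2)=v_\fp(a)$ make the same computation go through verbatim.
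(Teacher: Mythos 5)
Your proof is correct and follows essentially the same route as the paper: reduce to the criterion $2\langle\bv_C,\bv_{C'}\rangle_Q\in\bZ$ from Lemma \ref{lem:supint}, derive $D\mid|i\sqrt{D}\zeta|^2$ from the unit-norm condition together with $D\mid\Delta$, and use the self-conjugacy of the (ramified) primes dividing $\Delta$ to upgrade the norm divisibility to $D\mid(i\sqrt{D}\zeta)(i\sqrt{D}\overline{\zeta'})$. You merely spell out the ideal-theoretic bookkeeping (the self-conjugate ideal $\fb$ with $\fb^2=D\cO$) more explicitly than the paper does, and your observation that only the Zariski-density-free direction of Lemma \ref{lem:supint} is needed is a worthwhile point of care.
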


\begin{proof}Let $C,C'\in\cS_D$ and let $\bv_C$ and $\bv_{C'}$ be as in the last proof. Since $D\,|\,\Delta$ and $\Delta\,|\,(i\sqrt{D}\hat{r})(i\sqrt{D}r)$, $$D=|i\sqrt{D}\zeta|^2-D\hat{r}r\equiv|i\sqrt{D}\zeta|^2\,\text{mod}\,D.$$ Similarly, $D\,|\,|i\sqrt{D}\zeta'|^2$. As any prime ideal containing $\Delta$ equals its conjugate, we see that $D$ also divides $(i\sqrt{D}\zeta)(i\sqrt{D}\overline{\zeta'})$ and $(i\sqrt{D}\overline{\zeta})(i\sqrt{D}\zeta')$. Thus $2\langle\bv_C,\bv_{C'}\rangle_Q = \zeta\overline{\zeta'}+\overline{\zeta}\zeta'-\hat{r}r'-r\hat{r}'\in\bZ$, proving our claim by Lemma \ref{lem:supint}.\end{proof}

Baragar and Lautzenheiser recently discovered a one-parameter family of circle packings that generalized the classical Apollonian strip packing \cite{baragar}. They begin with four circles, say $C_1$, $C_2$, $C_3$, and $C_4$, oriented so as to have disjoint interiors. Two are parallel lines distanced $1$ apart. Two are circles of radius $1/2$ with centers distanced $\sqrt{D}$ apart for some $D\in\bN$, each circle tangent to both lines. It is then proved in \cite{baragar} that circles from the lattice generated by $\bv_{C_1}$, $\bv_{C_2}$, $\bv_{C_3}$, and $\bv_{C_4}$ are dense in $\widehat{\bC}$ and only intersect tangentially. Among positively oriented circles from this lattice, they keep those which lie between the two lines and are not contained in the interior of another positively oriented circle. This produces a circle packing. The same procedure is used to construct the circle packing in Figure \ref{fig:4} from its background arrangement, so the reader can see the resulting effect.

\begin{proposition}\label{prop:baragar}The Baragar-Lautzenheiser packing of parameter $D$ is contained in $\cS_D$ for $\bQ(i\sqrt{D})$.\end{proposition}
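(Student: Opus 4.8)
The plan is to place the four generating circles $C_1,C_2,C_3,C_4$ in a convenient position, compute their vectors $\bv_{C_j}$ explicitly, and check directly that $i\sqrt{D}\,\bv_{C_j}\in\cO^2\times\sqrt{\Delta}\bZ^2$ for each $j$. Because $\cS_D$ is defined by membership of $i\sqrt{D}\,\bv_C$ in the $\bZ$-lattice $\cO^2\times\sqrt{\Delta}\bZ^2$, and because every retained circle of the packing has its vector in the $\bZ$-span of $\bv_{C_1},\dots,\bv_{C_4}$, closure of that lattice under integer combinations gives the result once the four generators are handled.

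The crucial choice is the orientation of the configuration. I would take the two parallel lines to be the vertical lines $\Re(z)=0$ and $\Re(z)=1$ (interiors pointing away from the strip) and the two radius-$1/2$ disks to have centers $1/2$ and $1/2+i\sqrt{D}$, so that the separation of length $\sqrt D$ runs along the imaginary axis. This is the whole point: the separation then appears as $i\sqrt{D}=\sqrt{-D}\in\cO$ for $\bQ(i\sqrt D)$, whereas placing it along the real axis would introduce an irrational $\sqrt D\notin\cO$. With the disks positively oriented one computes the unit vectors
\[
\bv_{C_1}=[-1\;\;-1\;\;0\;\;0],\quad\bv_{C_2}=[1\;\;1\;\;2\;\;0],\quad\bv_{C_3}=[1\;\;1\;\;0\;\;2],\quad\bv_{C_4}=[1+2i\sqrt D\;\;1-2i\sqrt D\;\;2D\;\;2].
\]

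Next I would multiply each $\bv_{C_j}$ by $i\sqrt D$ and verify the membership. Writing $D=s^2d$ with $d$ squarefree, so $\bQ(i\sqrt D)=\bQ(i\sqrt d)$ and $i\sqrt D=s\sqrt{-d}\in\cO$, the first two (curvature-center) coordinates of each $i\sqrt D\,\bv_{C_j}$ are products of $i\sqrt D\in\cO$ with a curvature-center already in $\cO$, hence lie in $\cO$. For the cocurvature and curvature coordinates, every entry that arises is an integer multiple of $2\sqrt{-d}$ (for instance $2i\sqrt D=2s\sqrt{-d}$ and $2D\cdot i\sqrt D=2s^3d\sqrt{-d}$), so it suffices to observe that $2\sqrt{-d}\in\sqrt{\Delta}\bZ$. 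This holds in both discriminant cases: $\Delta=-d$ gives $\sqrt\Delta=\sqrt{-d}$, while $\Delta=-4d$ gives $\sqrt\Delta=2\sqrt{-d}$. Thus $i\sqrt D\,\bv_{C_j}\in\cO^2\times\sqrt\Delta\bZ^2$ for each $j$, so $C_1,C_2,C_3,C_4\in\cS_D$.

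Finally, for any integer combination $\sum_j n_j\bv_{C_j}$ one has $i\sqrt D\sum_j n_j\bv_{C_j}=\sum_j n_j\,(i\sqrt D\,\bv_{C_j})\in\cO^2\times\sqrt\Delta\bZ^2$ by closure of the lattice. Every circle kept in the Baragar--Lautzenheiser packing has its vector in this $\bZ$-span and, being a genuine oriented circle, is a unit vector for $Q$; hence it satisfies Definition \ref{def:SD}. The selection rules (positive orientation, lying between the lines, maximality) are irrelevant to containment beyond guaranteeing that the packing sits inside the lattice. I expect the only real obstacle to be bookkeeping: arranging the placement so that $\sqrt D$ enters as $\sqrt{-D}\in\cO$, and dispatching the two cases $\Delta=-d$ and $\Delta=-4d$ together with the non-squarefree factor $s$. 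Once the four generators are shown to lie in $\cS_D$, the lattice argument is immediate.
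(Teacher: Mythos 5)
Your proposal is correct and follows essentially the same route as the paper: the same placement of $C_1,\dots,C_4$, the same four vectors, and the same verification that $i\sqrt{D}\,\bv_{C_k}$ lands in $\cO^2\times\sqrt{\Delta}\bZ^2$, with the lattice-closure step handling the rest of the packing. Your extra care in checking that the cocurvature/curvature entries are even multiples of $i\sqrt{d}$ (so that the $\Delta=-4d$ and non-squarefree $D$ cases both work) is a welcome refinement of a point the paper passes over quickly, but it is not a different argument.
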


\begin{proof}Let $C_1$ be the vertical line through $0$ and $C_2$ the vertical line through $1$. Orient them to have disjoint interiors, so \begin{equation}\label{eq:5}\bv_{C_1}=[-1\;-\!1\;\,0\;\,0],\hspace{0.5cm}\text{and}\hspace{0.5cm}\bv_{C_2}=[1\;\,1\;2\;\,0].\end{equation} We also have two positively oriented circles of curvature $2$, $C_3$ and $C_4$: \begin{equation}\label{eq:6}\bv_{C_3}=[1\;\,1\;\,0\;\,2],\hspace{0.5cm}\text{and}\hspace{0.5cm}\bv_{C_4}=[1+2i\sqrt{D}\;\;\,1-2i\sqrt{D}\;\;\,2D\;\,2].\end{equation} They are tangent to $C_1$ and $C_2$ with centers distanced $\sqrt{D}$ apart. For $k=1,2,3,4$ we have $i\sqrt{D}\bv_{C_k}\in \bZ[i\sqrt{D}]^2\times i\sqrt{D}\bZ^2$ implying $C_k\in\cS_D$ for $K=\bQ(i\sqrt{D})$.\end{proof}

Note that if $D$ is divisible by a a perfect square, say $f^2\neq 1,4$, then $D$ does not divide the discriminant of $\bQ(i\sqrt{D})$. In these cases Proposition \ref{prop:supint} does not apply. Rather, $\bv_C$ for $C$ in the Baragar-Lautzenheiser packing satisfies $i\sqrt{D}\bv_C\in \cO_f^2\times \sqrt{\Delta_f}\bZ^2$, where $\cO_f$ is the order of discrimant $\Delta_f=f^2\Delta$. We avoid discussing non-maximal orders here, except to mention that the proof of Proposition \ref{prop:supint} still works for $D\,|\,\Delta_f$. Alternatively, Lemma \ref{lem:supint} can be applied directly to a Baragar-Lautzenheiser packing by checking that products of lattice generators in (\ref{eq:5}) and (\ref{eq:6}) are integers or half-integers. Either way, we see that Baragar-Lautzenheiser packings are superintegral.

\subsection{The almost local-global principle}Certain integral circle packings have congruence restrictions on what integers can appear as curvatures. For example, curvatures in classical Apollonian packings always lie in a proper subset of the congruence classes $\text{mod}\,24$ \cite{fuchs}. Graham, Lagarias, Mallows, Wilkes, and Yan conjectured (for Apollonian packings) that every sufficiently large integer which might occur as a curvature does \cite{lagarias}. Although this is still unresolved, Bourgain and Kontorovich showed that curvatures in an Apollonian packing have asymptotic density $1$ among integers passing local obstructions \cite{bourgain}.

In each of the packings considered here, local obstructions to curvatures always form a finite set of forbidden congruence classes modulo some positive integer $L_0$, as in the Apollonian case where $L_0=24$. This is a consequence of strong approximation for Zariski dense subgroups of $\text{PSL}_2(\cO)$ \cite{rapinchuk}. As such, we define the almost-local global principal \textit{with respect to $L_0$}. A more general definition need not assume $L_0$ exists.

\begin{definition}\label{def:local}An arrangement $\cA$ satisfies the \emph{almost local-global principle with respect to} $L_0$ if for some $\eps > 0$ and all sufficiently large $x$, the number of positive integers less than $x$ that occur as curvatures in $\cA$ is at least $kx/L_0-x^{1-\eps}$, where $k$ is the number of congruence classes $\text{mod}\,L_0$ represented by curvatures in $\cA$.\end{definition}

The Bourgain-Kontorovich result has recently been generalized by Fuchs, Stange, and Zhang.

\begin{theorem}\label{thm:stange}\emph{(Fuchs-Stange-Zhang \cite{zhang})} Suppose $\Gamma\leq\emph{PSL}_2(\cO)$ is finitely generated and Zariski dense, has infinite covolume, and contains a congruence subgroup of $\emph{PSL}_2(\bZ)$. If $M,N\in\emph{PGL}_2(K)$ with $N\widehat{\bR}$ tangent to $\widehat{\bR}$, then $M\Gamma N\widehat{\bR}$ satisfies the almost local-global principle with respect to some $L_0$ that depends only on $\Gamma$, $N$, and $|\det M|/\|(M)\|$.\end{theorem}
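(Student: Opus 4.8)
The plan is to realize the curvatures in $M\Gamma N\widehat{\bR}$ as integer values of an explicit arithmetic function on the orbit and then run the Hardy--Littlewood circle method, with the harmonic analysis supplied by the congruence hypothesis on $\Gamma$. By Lemma \ref{lem:stange}, the curvature of $M\gamma N\widehat{\bR}$ is $i(\beta\overline{\delta}-\overline{\beta}\delta)$, where $(\beta,\delta)$ is the bottom row of the (normalized) matrix $M\gamma N$. After clearing the common denominator of the entries of $M$ and $N$ -- this is where the scaling $|\det M|/\|(M)\|=\sqrt{D}$ enters, exactly as in Theorem \ref{thm:redef} -- this expression becomes an integral, indefinite quadratic form $\kappa(\gamma)$ in the coordinates of $\gamma$. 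Thus the curvatures up to $x$ are the values $\kappa(\gamma)\le x$, and since $\kappa$ is quadratic in the entries of $\gamma$, counting them reduces to understanding how often a given integer $n\le x$ is represented as $\gamma$ ranges over elements of $\Gamma$ of archimedean size $O(\sqrt{x})$.

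First I would use a congruence subgroup $\Lambda\le\text{PSL}_2(\bZ)$ contained in $\Gamma$ to produce a large, well-understood subfamily of curvatures. Since $\Lambda$ fixes $\widehat{\bR}$, applying it to the tangent circle $N\widehat{\bR}$ sweeps out a family of circles along $\widehat{\bR}$ whose curvatures are the values of a fixed integral binary quadratic form; tangency of $N\widehat{\bR}$ to $\widehat{\bR}$ is precisely what guarantees this form represents a full arithmetic progression, playing the role of the ``fan'' of tangent circles in the Apollonian case. The admissible classes mod $L_0$, hence the integer $k$ and the level $L_0$ of Definition \ref{def:local}, are then read off from the image of $\Gamma$ in the finite congruence quotients: by strong approximation for Zariski dense subgroups of $\text{PSL}_2(\cO)$ \cite{rapinchuk} this image stabilizes at a fixed level supported on finitely many primes, determined by $\Gamma$, by the entries of $N$, and by the denominator $\sqrt{D}=|\det M|/\|(M)\|$.

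With the target progression and local data in place, I would study the representation function $\mathcal{R}_x(n)=\#\{\gamma\in\Gamma:\|\gamma\|\le\sqrt{x},\ \kappa(\gamma)=n\}$ through its exponential-sum expansion, decomposing the frequency circle into major and minor arcs. On the major arcs (near rationals $a/q$ with $q$ small) equidistribution of $\Gamma$ in the congruence quotients, combined with the archimedean density of the orbit, yields a main term proportional to $x$ times a singular series that is positive precisely on the admissible classes. On the minor arcs one needs genuine cancellation, and this is furnished by the expander/spectral-gap property of the congruence tower of $\Gamma$, which holds because $\Gamma$ is finitely generated, Zariski dense, and of infinite covolume (the thin-group setting of Bourgain--Gamburd--Sarnak). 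Finally a second-moment argument: bounding $\sum_{n\le x}\mathcal{R}_x(n)^2$ and comparing it with $\big(\sum_{n\le x}\mathcal{R}_x(n)\big)^2$ via Cauchy--Schwarz forces all but $O(x^{1-\eps})$ of the admissible $n\le x$ to satisfy $\mathcal{R}_x(n)>0$, which is the assertion of Definition \ref{def:local}.

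The main obstacle is the minor-arc estimate: one must propagate the spectral gap uniformly through the entire congruence tower and convert it into a power-saving bound on the relevant exponential sums, and this is the technical heart of the Bourgain--Kontorovich method that has to be carried over to the twisted orbit $M\Gamma N\widehat{\bR}$. A secondary difficulty is the bookkeeping in the major-arc analysis for a general form $\kappa$ rather than the specific Descartes configuration, and verifying that the resulting $L_0$ depends only on the stated data $\Gamma$, $N$, and $|\det M|/\|(M)\|$, with $M$ entering only through that denominator.
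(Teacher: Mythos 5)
This statement is not proved in the paper at all: it is imported verbatim from Fuchs--Stange--Zhang \cite{zhang}, and the only original content the paper adds is the remark immediately following it, namely that \cite{zhang} assumes $M,N\in\text{PSL}_2(K)$ and that the extension to $\text{PGL}_2(K)$ goes through because the role played in \cite{zhang} by the square of a common denominator of the entries of $M$ can be played more generally by the integral ideal $(\det M)/(M)^2$. So the relevant comparison is between your sketch and the proof in the cited reference, not anything in this paper.

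Measured against that, your proposal is a reasonable summary of the architecture of the Bourgain--Kontorovich method as generalized in \cite{zhang} (curvatures as values of a quadratic polynomial on the orbit, a fan of circles tangent to a fixed one supplying multiplicity, major/minor arc decomposition, spectral gap for the congruence tower of a thin group, and a second-moment argument). But it is a proof plan, not a proof: the two steps you flag as ``obstacles'' --- the uniform spectral-gap input and the power-saving minor-arc bounds for the twisted orbit --- are precisely the content of the cited work, and nothing in your outline supplies them. Two further points. First, your claim that tangency of $N\widehat{\bR}$ to $\widehat{\bR}$ makes a single binary quadratic form ``represent a full arithmetic progression'' is false as stated; a fixed binary form represents only $O(x/\sqrt{\log x})$ integers up to $x$, and the density-one conclusion comes from averaging the shifted forms over an ensemble of group elements, which is where the hard analysis lives. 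Second, you do not address the $\text{PSL}_2(K)$ versus $\text{PGL}_2(K)$ issue, which is the one place where this paper actually has to argue something: for $M\in\text{PGL}_2(K)$ there need not be a scalar multiple of $M$ with entries in $\cO$, so the integrality of the relevant quadratic forms must be obtained from the ideal $(\det M)/(M)^2$ rather than from a common denominator. If you intend to present this theorem, the appropriate move is to cite \cite{zhang} and justify only that extension.
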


Remark that \cite{zhang} restricts attention to $M,N\in\text{PSL}_2(K)$, not $\text{PGL}_2(K)$ as stated above. Communication with the second- and third-named authors confirmed that their proof still applies. When $M\in\text{PSL}_2(K)$, a common denominator for the entries of $M$ is used in \cite{zhang} as a scaling factor to achieve integrality of certain quadratic forms. When $M\in\text{PGL}_2(K)$, the role of a common denominator squared can be assumed more generally by the integral ideal $(\det M)/(M)^2$.

The hypotheses of Theorem \ref{thm:stange} can often be verified without ever computing the symmetry group of an arrangement. Let us show how the manner in which a subarrangement sits in $\cS_D$ can be used to conclude the almost local-global principle. 

\begin{lemma}\label{lem:orbits}If $H(D,\Delta)=1$, then $\cS_D$ is a finite union of $\emph{PSL}_2(\cO)$ orbits.\end{lemma}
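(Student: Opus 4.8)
The plan is to use the explicit characterization of $\cS_D$ from Theorem~\ref{thm:redef} together with the ideal-theoretic bookkeeping from Lemma~\ref{lem:coprime}. Since $H(D,\Delta)=1$, every $C\in\cS_D$ equals $M\widehat{\bR}$ for some $M\in\text{PGL}_2(K)$ with $|\det M|/\|(M)\|=\sqrt{D}$, and conversely every such $M$ yields a circle in $\cS_D$. The two $\text{PSL}_2(\cO)$-invariant pieces of data attached to such an $M$ are the fractional ideal $(M)$ (well-defined up to the action of $\text{PSL}_2(\cO)$ by Lemma~\ref{lem:coprime}, since $(NM)=(M)$) and the determinant ideal $(\det M)$. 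First I would show that the $\text{PSL}_2(\cO)$-orbit of $C=M\widehat{\bR}$ is controlled by the ideal class of $(M)$ together with finitely many residual choices, so that the number of orbits is bounded by a finite quantity.

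The key steps, in order, are as follows. First, observe that $\text{PSL}_2(\cO)$ acts on $\cS_D$ on the left, and that for $C=M\widehat{\bR}$, left multiplication by $N\in\text{PSL}_2(\cO)$ gives $NC=(NM)\widehat{\bR}$ with $(NM)=(M)$. Thus the ideal $(M)$, and hence its ideal class $[(M)]$ in the class group $\text{Cl}(K)$, is a $\text{PSL}_2(\cO)$-orbit invariant. Second, I would bound the possibilities for $(M)$ itself: the constraint $|\det M|/\|(M)\|=\sqrt{D}$ means $\|(M)\|^2 D = |\det M|^2 = \det M\,\overline{\det M}$, so $\|(M)\|^2$ divides $|\det M|^2$ in a controlled way, and after rescaling $M$ by an element of $K^*$ (which does not change the circle $M\widehat{\bR}$ nor the quantity $|\det M|/\|(M)\|$) one may normalize $(M)$ to be an integral ideal whose norm is bounded in terms of $D$. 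This forces $(M)$ to lie among finitely many integral ideals. Third, having fixed the ideal $(M)=\fa$ up to the rescaling freedom, I would argue that the remaining freedom in $M$ is absorbed by the left $\text{PSL}_2(\cO)$-action together with the stabilizer of $\widehat{\bR}$ on the right (which consists of real M\"obius transformations), leaving only finitely many orbits for each admissible $\fa$. Summing over the finitely many admissible $\fa$ gives the result.

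The main obstacle I anticipate is step three: pinning down exactly how much of the data of $M$ survives after quotienting by $\text{PSL}_2(\cO)$ on the left and by the $\widehat{\bR}$-stabilizer on the right. The cleanest route is probably to reduce to a canonical form for $M$. Using Lemma~\ref{lem:coprime} and the fact that $\text{PSL}_2(\cO)$ acts transitively on ideal-class representatives $(\alpha,\beta)$ within a fixed class, I would first normalize the bottom row of $M$ (equivalently, the point $M^{-1}(\infty)$ or the relevant cusp data) by a left $\text{PSL}_2(\cO)$-factor, reducing to matrices with a prescribed bottom row determined by a class-group representative. What then remains is the choice of top row modulo the right-stabilizer and modulo integral shears, and I would show this lives in a finite set by a determinant-norm counting argument: the entries are constrained to lie in a fixed fractional ideal with bounded norm and bounded determinant, of which there are finitely many up to the available symmetries. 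One must be careful that the rescaling of $M$ by $K^*$, complex conjugation symmetry, and the unit group $\cO^*$ are all accounted for so that no infinite family slips through; verifying that these symmetries collapse the data to finitely many orbits is where the real work lies, but each individual reduction is a finite-index or finite-class statement.
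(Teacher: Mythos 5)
Your opening moves are the same as the paper's: invoke Theorem \ref{thm:redef} to write every circle as $M\widehat{\bR}$, use Lemma \ref{lem:coprime} to see that $(M)$ is unchanged under the $\mathrm{PSL}_2(\cO)$-action, and rescale by $K^*$ so that $(M)$ runs over finitely many integral ideals. Where you diverge is in how the \emph{residual} finiteness is established, and this is also where your proposal stops short of a proof. The paper's device is a concrete finite invariant: it shows that if $(M)=(M')$, $\det M=\det M'$, and $M\equiv M'\;\mathrm{mod}\;(\det M)/(M)^2$, then $M$ and $M'$ differ by an element of $\mathrm{PSL}_2(\cO)$, so the two circles lie in a common orbit; finiteness then follows because there are finitely many ideals $(M)$ up to scaling, finitely many integral ideals $(\det M)/(M)^2$ of norm $D$, finitely many generators of a fixed ideal with prescribed absolute value (they differ by units), and finitely many congruence classes of matrices modulo an integral ideal. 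You instead propose a double-coset reduction of $M$ modulo $\mathrm{PSL}_2(\cO)$ on the left and the stabilizer of $\widehat{\bR}$ on the right, deferring the decisive count to ``a determinant-norm counting argument.''

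That deferred argument is the entire content of the lemma, and as written you have not supplied it. To close it along your route you would need at least the following. First, finiteness of the possible values of $\det M$ once $(M)=\fa$ is fixed: $\det M$ lies in the lattice $\fa^2$ and has fixed absolute value $\sqrt{D}\,\|\fa\|$, so only finitely many values occur. Second, after normalizing the bottom row and fixing $\det M$, two admissible top rows differ by adding $s$ times the bottom row, where $s$ is constrained to lie in a fractional ideal containing $\cO$ with finite index; only the integral shears $s\in\cO$ are absorbed by $\mathrm{PSL}_2(\cO)$, so you must observe that the remaining $s$ fall into finitely many cosets of $\cO$. Third, the normalization of the bottom row itself needs justification, since left multiplication by $\mathrm{PSL}_2(\cO)$ mixes the rows and only controls the ideal generated by the bottom row up to the divisors of $(\det M)/(M)$ between $(\det M)/(M)$ and $(M)$. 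Each of these is a finite-index or lattice-point statement, as you anticipate, but none is carried out, and collectively they are exactly what the paper's congruence-class invariant packages in one stroke. I would recommend either completing these three counts explicitly or adopting the paper's invariant $\bigl((M),\,\det M,\,M\ \mathrm{mod}\ (\det M)/(M)^2\bigr)$, which avoids having to analyze the right stabilizer of $\widehat{\bR}$ altogether.
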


\begin{proof}By Theorem \ref{thm:redef}, every element of $\cS_D$ is of the form $M\widehat{\bR}$ for some $M\in\text{PGL}_2(K)$ with $|\det M|/\|(M)\|=\sqrt{D}$. If $M,M'\in\text{PGL}_2(K)$ are such that $(M)=(M')$, $\det M = \det M'$ and $M\equiv M'\,\text{mod}\,(\det M)/(M)^2$, then $M^{-1}M'\in\text{PSL}_2(\cO)$. That is, $M\widehat{\bR}$ and $M'\widehat{\bR}$ are in the same $\text{PSL}_2(\cO)$ orbit. There are finitely many possible ideals $(M)$ up to scaling, as well as finitely many integral ideals $(\det M)/(M)^2$ of norm $D$, each with only finitely many congruence classes of matrices.\end{proof}

The proof above overcounts the number of orbits, but it will not matter.

\begin{theorem}\label{thm:local}Let $\cA\subset\cS_D$ be such that each circle in $\cA$ intersects at least one other tangentially, and $\cA\cap M\cA$ is either empty or $\cA$ for any $M$ in some fixed congruence subgroup of $\emph{PSL}_2(\cO)$. Then $\cA$ satisfies the almost local-global principle.\end{theorem}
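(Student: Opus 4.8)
The plan is to reduce Theorem~\ref{thm:local} to a direct application of the Fuchs--Stange--Zhang result (Theorem~\ref{thm:stange}) by exhibiting $\cA$, or at least the curvatures appearing in $\cA$, as a subset of an orbit $M\Gamma N\widehat{\bR}$ satisfying all the hypotheses of that theorem. First I would pick a tangential pair $C, C' \in \cA$; by Theorem~\ref{thm:redef} we may write $C = M\widehat{\bR}$ and $C' = M'\widehat{\bR}$ for matrices in $\text{PGL}_2(K)$ with $|\det M|/\|(M)\| = |\det M'|/\|(M')\| = \sqrt{D}$. Since $C$ and $C'$ are tangent, after conjugating by $M^{-1}$ we arrange that $\widehat{\bR} = M^{-1}C$ is tangent to $N\widehat{\bR} := M^{-1}C'$, giving the matrix $N \in \text{PGL}_2(K)$ with $N\widehat{\bR}$ tangent to $\widehat{\bR}$ required by Theorem~\ref{thm:stange}. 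The group $\Gamma$ will be the stabilizer of $\cA$ inside the given congruence subgroup: the hypothesis that $\cA \cap M\cA$ is empty or all of $\cA$ for every $M$ in some fixed congruence subgroup $\Gamma_0 \leq \text{PSL}_2(\cO)$ means that $\Gamma := \{M \in \Gamma_0 : M\cA = \cA\}$ is itself a finite-index subgroup of $\Gamma_0$, hence a congruence subgroup, and it fixes $\cA$ setwise.

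Next I would verify the structural hypotheses on $\Gamma$. That $\Gamma$ contains a congruence subgroup of $\text{PSL}_2(\bZ)$ follows because $\Gamma$ has finite index in $\Gamma_0$, which contains a principal congruence subgroup of $\text{PSL}_2(\cO)$; intersecting with $\text{PSL}_2(\bZ)$ gives the required congruence subgroup over $\bZ$. Finite generation is automatic for finite-index subgroups of the finitely generated group $\text{PSL}_2(\cO)$. The remaining two hypotheses---Zariski density and infinite covolume---are where the tangency and invariance assumptions do real work. Zariski density should follow from the fact that $\Gamma$ acts on $\cA$ with the tangential structure forcing enough independent parabolic and hyperbolic elements; concretely, the stabilizer of a tangency point together with elements moving one tangential chain to another generates a non-elementary, non-Fuchsian group, which is Zariski dense in $\text{PSL}_2(\bC)$. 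Infinite covolume follows because $\cA$ is a proper subset of the dense arrangement $\cS_D$ (a finite union of $\text{PSL}_2(\cO)$ orbits by Lemma~\ref{lem:orbits}), so $\Gamma$ cannot be a lattice in $\text{PSL}_2(\cO)$; a thin (infinite-index) subgroup of a lattice has infinite covolume.

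With $\Gamma$, $N$, and $M$ in hand, Theorem~\ref{thm:stange} yields that $M\Gamma N\widehat{\bR}$ satisfies the almost local-global principle with respect to some $L_0$ depending only on $\Gamma$, $N$, and $|\det M|/\|(M)\| = \sqrt{D}$. The final step is to match $M\Gamma N\widehat{\bR}$ with $\cA$: since $N\widehat{\bR} = M^{-1}C'$ and $M\Gamma M^{-1}$ fixes $\cA = M\widehat{\bR}$'s orbit, the set $M\Gamma N\widehat{\bR}$ is the $\Gamma$-orbit of the single circle $C' \in \cA$, hence is contained in $\cA$. Because every circle of $\cA$ is tangent to at least one other and $\Gamma$ acts with finitely many orbits on $\cA$, the curvatures realized in the single orbit of $C'$ already capture a positive proportion of those in $\cA$, so the density-$1$ lower bound transfers from the orbit to $\cA$ itself.

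The main obstacle I expect is verifying that $\Gamma$ is genuinely Zariski dense and of infinite covolume rather than accidentally elementary or a lattice. The invariance hypothesis $\cA \cap M\cA \in \{\emptyset, \cA\}$ guarantees $\Gamma$ is large enough to be non-elementary only if $\cA$ itself contains sufficiently rich tangency structure; one must rule out degenerate cases where $\cA$ is a single tangent chain stabilized by a merely parabolic or cyclic group. The tangency hypothesis (every circle tangent to another) is precisely what should prevent this, but making that rigorous---showing the stabilizer acts with the two-dimensional ``spread'' needed for Zariski density---is the delicate point, and it is also where I would lean on the explicit $\text{PGL}_2(K)$ description from Theorem~\ref{thm:redef} to locate the needed hyperbolic and parabolic generators.
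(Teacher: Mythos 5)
Your high-level strategy --- reduce to Theorem \ref{thm:stange} by realizing pieces of $\cA$ as orbits $M\Gamma N\widehat{\bR}$ --- matches the paper's, but the execution has a genuine gap at the central step. You claim the stabilizer $\Gamma=\{M\in\Gamma_0:M\cA=\cA\}$ is a finite-index subgroup of the congruence subgroup $\Gamma_0$, ``hence a congruence subgroup.'' The hypothesis $\cA\cap M\cA\in\{\emptyset,\cA\}$ gives no such thing: for the packings this theorem targets, the stabilizer is a \emph{thin} (infinite-index, infinite-covolume) subgroup, which is the entire reason one needs Fuchs--Stange--Zhang rather than classical lattice-point counting. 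Your proposal is also internally inconsistent on this point: two paragraphs later you argue $\Gamma$ has infinite covolume because it cannot be a lattice, which contradicts finite index in a lattice. The false finite-index claim then infects the verification that the relevant group contains a congruence subgroup of $\text{PSL}_2(\bZ)$ --- and note that the group which must contain one is the \emph{conjugate} $M^{-1}\Gamma_0M$ (the orbit fed to Theorem \ref{thm:stange} is $M(M^{-1}\Gamma_0M)N\widehat{\bR}$, with the congruence subgroup of $\text{PSL}_2(\bZ)$ acting next to $\widehat{\bR}$ on the inside), not $\Gamma_0$ itself, so intersecting $\Gamma_0$ with $\text{PSL}_2(\bZ)$ is the wrong move. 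Establishing this containment is the technical heart of the paper's proof: the adjugate computation $MA\,\mathrm{adj}\,M\equiv(\det M)I\;\mathrm{mod}\;DL$, combined with coprimality of $(M)^2$ and $(\det M)/(M)^2$, shows $MAM^{-1}\in\Gamma(L)$ for $A$ in the level-$DL$ principal congruence subgroup of $\text{PSL}_2(\bZ)$, and the intersection hypothesis (applied to the fixed circle $C$) then forces $MAM^{-1}$ into the stabilizer. This same step delivers Zariski density cleanly ($\cA$ lies in the limit set of the stabilizer), whereas your density argument remains a sketch resting on parabolic and hyperbolic elements you have not produced.

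The final step also does not close. Definition \ref{def:local} demands that \emph{every} congruence class mod $L_0$ represented by curvatures in $\cA$ be hit, with count $kx/L_0-x^{1-\eps}$; proving the principle for a single orbit $\Gamma C'$ and noting it captures ``a positive proportion'' of the curvatures of $\cA$ does not yield the statement for $\cA$, since that orbit may miss some represented congruence classes entirely. The paper handles this by first showing $\cA$ is a finite union of stabilizer orbits (via Lemma \ref{lem:orbits} together with the intersection hypothesis --- a fact you assert without proof), then splitting each orbit $\Gamma_0C_k$ into suborbits $(N_jM_k)\Gamma_k(M_k^{-1}C_k)$ for a finitely generated, infinite-covolume, Zariski dense $\Gamma_k\leq M_k^{-1}\Gamma_0M_k$, and choosing finitely many coset representatives $N_j$ so that every congruence class mod $L_k$ occurring in $\Gamma_0C_k$ already occurs in a chosen suborbit. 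Passing to $\Gamma_k$ is also how the paper legitimately secures the ``finitely generated'' and ``infinite covolume'' hypotheses of Theorem \ref{thm:stange}, rather than asserting them for the full stabilizer.
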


\begin{proof}Let $\Gamma_0$ be the maximal subgroup of $\text{PSL}_2(\cO)$ that preserves $\cA$. Let $\Gamma(L)\leq\text{PSL}_2(\cO)$ be the congruence subgroup (of level $L$) assumed in our hypothesis. 

Proposition \ref{prop:discrete} tells us tangential intersection in $\cS_D$ occurs only at points in $\widehat{K}$, which means $H(D,\Delta)=1$ by Lemma \ref{lem:split}. So each $C\in\cA$ is of the form $M\widehat{\bR}$ for some $M\in\text{PGL}_2(K)$ with $|\det M|/\|(M)\|=\sqrt{D}$. We claim for such a matrix $M$, that $MAM^{-1}\in\Gamma_0$ whenever $A\in\text{PSL}_2(\bZ)$ is in the principal congruence subgroup of level $DL$. To see this, first scale $M$ so that $\|(M)\|$ and $DL$ are coprime. Let $\text{adj}\,M=M^{-1}\det M$. We have \begin{equation}\label{eq:7}MA\,\text{adj}\,M\equiv M\,\text{adj}\,M= \begin{bmatrix}\det M & 0 \\ 0 & \det M\end{bmatrix}\,\text{mod}\,DL.\end{equation} In particular, the entries of $MA\,\text{adj}\,M$ are contained in $(\det M)/(M)^2$. But they are also contained in $(M)^2$, which is coprime to  $(\det M)/(M)^2$. Thus entries of $MA\,\text{adj}\,M$ are divisible by $\det M$. Now divide both sides of (\ref{eq:7}) by $\det M$, and recall that $\|(M)\|$ and $L$ are coprime to see that $MAM^{-1}\in\Gamma(L)$. Since $(MAM^{-1})C=(MAM^{-1})M\widehat{\bR}=M\widehat{\bR}=C$, we have $C\in \cA\cap (MAM^{-1})\cA$. So our hypothesis gives $MAM^{-1}\in\Gamma_0$ as claimed. Note that $C$ is the limit set of $MAM^{-1}$ for $A\in\text{PSL}_2(\bZ)$ from the principal congruence subgroup of level $DL$. In particular, $\cA$ is in the limit set of $\Gamma_0$, which is therefore Zariski dense.

Now, since $[\text{PSL}_2(\cO):\Gamma(L)]$ is finite, $\cS_D$ is a finite union of $\Gamma(L)$ orbits by Lemma \ref{lem:orbits}. But if it happens for some $C\in\cA$ and $M\in\Gamma(L)$ that $MC\in\cA$, then $MC\in\cA\cap M\cA$ implies $M\in\Gamma_0$ by our hypothesis. Therefore $\cA$ is a finite union of orbits of $\Gamma_0$. Fix one element from each orbit, call them $C_1,...,C_n$, as well as some $C_k'\in\cA$ tangent to $C_k$ for each $k=1,...,n$. Write $C_k'=M_k\widehat{\bR}$ for $M_k\in\text{PGL}_2(K)$ with $|\det M_k|/\|M_k\|=\sqrt{D}$. Since congruence subgroups of $\text{PSL}_2(\bZ)$ are finitely generated, for each $k=1,...,n$ we can find some $\Gamma_k\leq M_k^{-1}\Gamma_0M_k$ that is finitely generated and of infinite covolume, while still being Zariski dense and containing the principal congruence subgroup of level $DL$ in $\text{PSL}_2(\bZ)$. We may then write $\Gamma_0C_k$ as a union of ``orbits" of the form $(N_jM_k)\Gamma_k(M_k^{-1}C_k)$, where $N_j\in\text{PSL}_2(\cO)$ runs over a complete set of coset representatives for $\Gamma_0/(M_k\Gamma_kM_k^{-1})$.

Observe that $M_k^{-1}C_k$ is tangent to the real line by choice of $C_k'$, that $\Gamma_k$ meets the criteria of Theorem \ref{thm:stange}, and that $|\det(N_jM_k)|/\|(N_jM_k)\|=|\det M_k|/\|(M_k)\|=\sqrt{D}$ for all $j$ by Lemma \ref{lem:coprime}. Thus $(N_jM_k)\Gamma_k(M_k^{-1}C_k)$ satisfies the almost local-global principle with respect to some $L_k$ that does not depend on $j$. By taking any finite set $N_{j_1},...,N_{j_i}$ for which the curvatures of the corresponding orbits represent every congruence class $\text{mod}\,L_k$ represented by those in $\Gamma_0C_k$, we see that $\Gamma_0C_k$ also satisfies the almost local-global principle. Thus $\cA$ does as well, with respect to $L_0=\text{lcm}(L_1,...,L_n)$. \end{proof}

The argument in which we pass to the subgroup $\Gamma_k$ and observe that multiplying by coset representatives $N_j$ leaves $|\det M_k|/\|(M_k)\|$ unchanged can be used to show that the ``finitely generated" and ``infinite covolume" hypotheses are not needed in Theorem \ref{thm:stange}.

Theorem \ref{thm:local} has a nice geometric realization when we restrict attention to circle packings---an infinite arrangement in which circle interiors are dense and disjoint. In Figure \ref{fig:2} we constructed a packing from a single, initial oriented circle by taking its largest exterior neighbor at each point of tangency. Continuing in this way, just one circle uniquely determines the rest of the packing. In particular, the packing and its image under a matrix in $\text{PSL}_2(\cO)$ are either disjoint or equal as Theorem \ref{thm:local} requires. (The argument is treated formally in the next corollary). 

This construction does not produce circle packings in every $\cS_D$. The first image of Figure \ref{fig:4} shows $\cS_{8}$ for $\bQ(i\sqrt{31})$. Here every circle has two points of tangency where its largest exterior neighbors have overlapping interiors. To find packings in such cases, we search among proper subarrangements. The background circles in the second image are those from $\cS_8$ with curvature-center and cocurvature satisfying $i\sqrt{8}\zeta\in(2,(1-i\sqrt{31})/2)$ and $i\sqrt{8}\hat{r}\in (2)$. Then the strategy outlined in the previous paragraph creates the bolded circle packing.

\begin{figure}[t]
    \centering
    \includegraphics[width=0.48\textwidth,clip,trim=4cm 0cm 4cm 0cm]{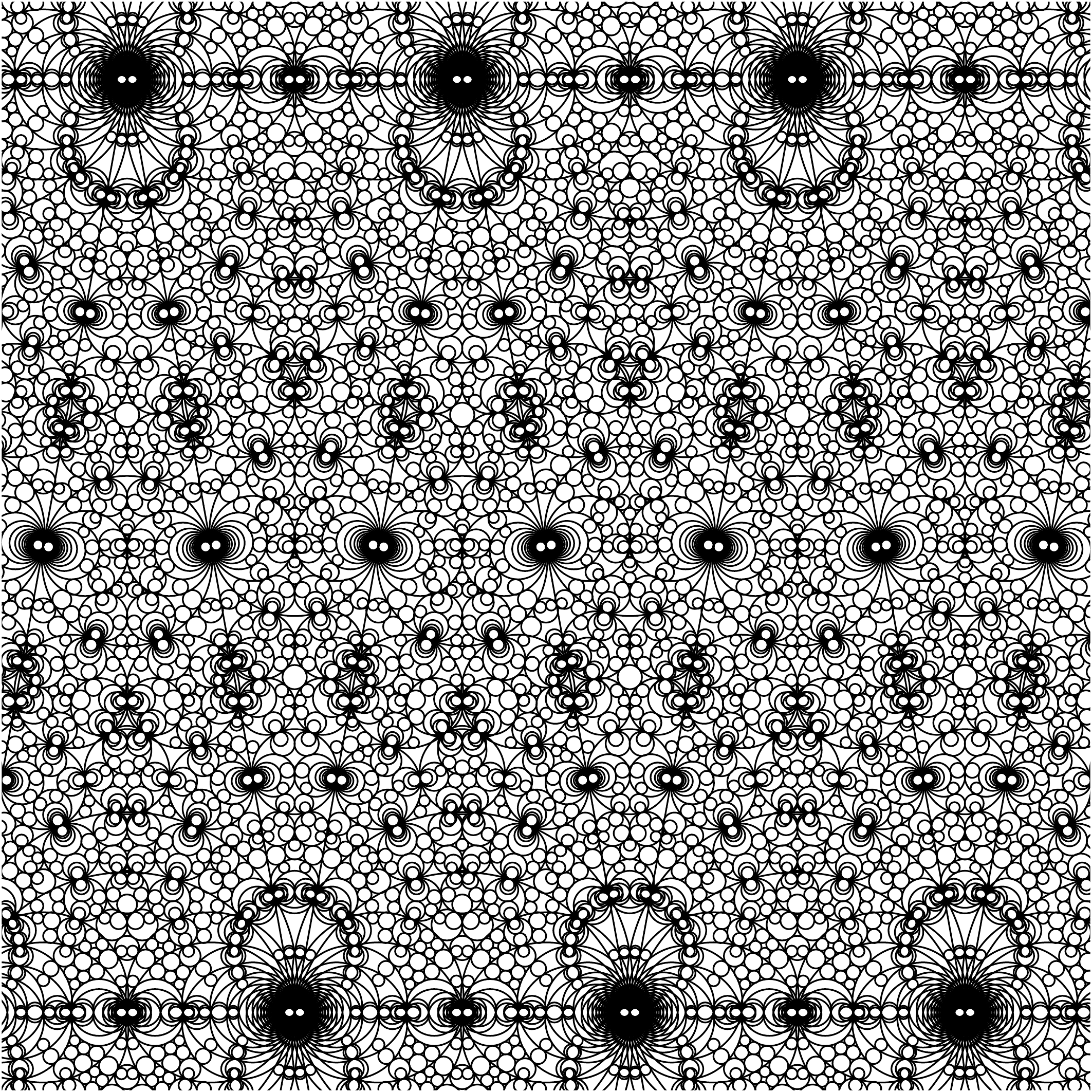}\hspace{0.04\textwidth}\includegraphics[width=0.48\textwidth,clip,trim=4cm 0cm 4cm 0cm]{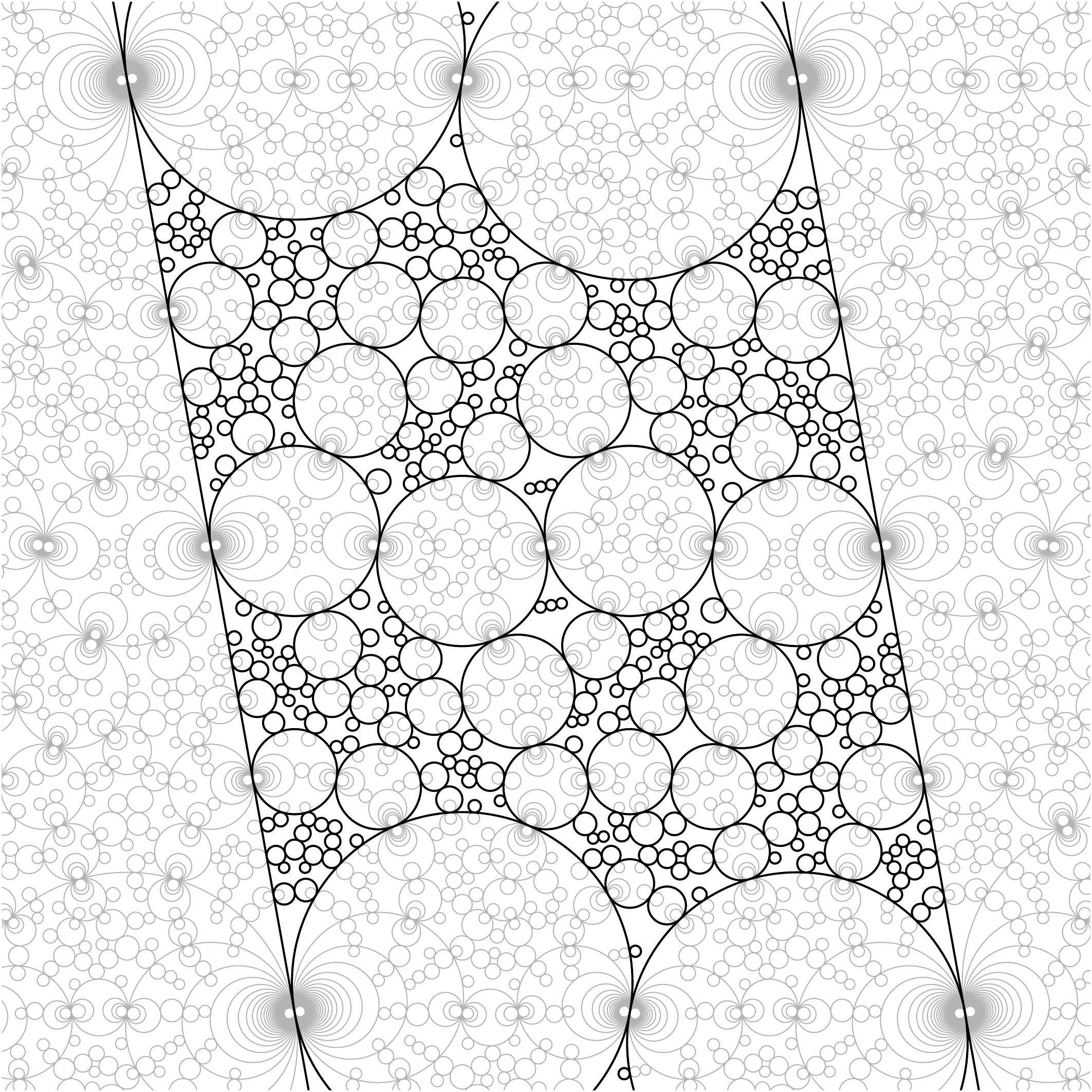}
    \captionsetup{width=1\textwidth}\caption{Piece of $\cS_8$ for $\bQ(i\sqrt{31})$ (left) and a possible choice of $\cA$ and immediate tangency packing $\cP$ (right) as per Definition \ref{def:itp}.}
    \label{fig:4}
\end{figure}

Recall that $H(D,\Delta)$ is the Hilbert symbol (Notation \ref{not:hilbert}).

\begin{definition}\label{def:itp}Suppose $H(D,\Delta)=1$. Let $\cA$ consist of those $C\in\cS_D$ for which $i\sqrt{D}\bv_C$ is in some fixed cosets of a full-rank sublattice of $\cO^2\times\sqrt{\Delta}\bZ^2$. An \emph{immediate tangency packing}, $\cP\subset\cA$, is a circle packing such that each $C\in\cA$ is contained in the closure of the interior of a single $C'\in\cP$.\end{definition}

While $\cO^2\times\sqrt{\Delta}\bZ^2$ has rank six over $\bR$, the first two entries of any $\bv_C$ are complex conjugates. So only a rank four sublattice of $\cO^2\times\sqrt{\Delta}\bZ^2$ is actually relevant to determining $\cA$ in Definition \ref{def:itp}.

Note that if $\cP\subset\cA$ is an immediate tangency packing and $C,C'\in\cP$ are tangent, then they must be ``immediately tangent" as defined by Stange \cite{stange2} and as described above regarding Figure \ref{fig:2}. That is, no circle of $\cA$ can be caught in between $C$ and $C'$, tangent and exterior to both. Such a circle could not possibly be in the closure of the interior of a single element of $\cP$. The immediate tangency property is not stated explicitly in Definition \ref{def:itp} due to ambiguity that arises when $\cA$ is not connected. Our definition uniquely defines a packing from an initial oriented circle whether $\cA$ is connected or not (as seen in the next proof).

Returning to Figure \ref{fig:4}, the sublattice defined by $i\sqrt{8}\zeta\in(2,(1-i\sqrt{31})/2)$ and $i\sqrt{8}\hat{r}\in (2)$ has four cosets because $(2,(1-i\sqrt{31})/2)$ has index two in $\cO$ and $(2)$ has index two in $\bZ$. Its trivial coset produces the background circles in the second image, which is Definition \ref{def:itp}'s $\cA$. Any of the three nontrivial cosets could also be used to create immediate tangency packings because $\cA$ in each case has only tangential intersections. It would be interesting to have a general method or criterion for selecting sublattices like this given some $\cS_D$. The one used for Figure \ref{fig:4} was found by experimentation.

\begin{corollary}\label{cor:local}All immediate tangency packings satisfy the almost local-global principle. \end{corollary}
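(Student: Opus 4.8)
The plan is to invoke Theorem \ref{thm:local} with the packing $\cP$ itself playing the role of the arrangement $\cA$ in that theorem. Since $\cP\subset\cA\subset\cS_D$ and $\cP$ is a circle packing, its circles have pairwise disjoint interiors, so no two of them cross transversally and every intersection is tangential; density of the interiors forces each circle to be tangent to at least one other. This verifies the first hypothesis of Theorem \ref{thm:local} for $\cP$. Everything therefore reduces to producing a fixed congruence subgroup $\Gamma(L)\leq\text{PSL}_2(\cO)$ for which $\cP\cap M\cP$ equals $\emptyset$ or $\cP$ whenever $M\in\Gamma(L)$.

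First I would exhibit a congruence subgroup that preserves the ambient arrangement $\cA$. The sublattice $\Lambda$ of $\cO^2\times\sqrt{\Delta}\bZ^2$ fixed in Definition \ref{def:itp} has finite index, and $\text{PSL}_2(\cO)$ acts on the finite quotient $(\cO^2\times\sqrt{\Delta}\bZ^2)/\Lambda$ through the spin homomorphism of Proposition \ref{prop:exiso}. The stabilizer of the finitely many cosets cutting out $\cA$ is a finite-index subgroup, hence contains a principal congruence subgroup $\Gamma(L)$. For every $M\in\Gamma(L)$ the spin image fixes each coset of $\Lambda$, so $i\sqrt{D}\bv_{MC}$ lies in the same coset as $i\sqrt{D}\bv_C$ for all $C$, giving $M\cA=\cA$. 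Because $M\in\text{PSL}_2(\cO)$ preserves orientations, tangencies, interiors, and $\cS_D$ (Proposition \ref{prop:symmetry}), and since each $C\in\cA=M\cA$ lies in the closure of the interior of a single circle of $M\cP$, the image $M\cP$ is again an immediate tangency packing inside $\cA$.

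The crux is uniqueness: any two immediate tangency packings in $\cA$ that share a single circle must coincide. This is the claim flagged just before Definition \ref{def:itp} as established in the present proof. I would argue that the neighbor of a given $C_0\in\cP$ across any one of its tangency points is singled out among the circles of $\cA$ by the immediate tangency condition alone---no circle of $\cA$ is caught tangent and exterior to both---so it does not depend on which packing $C_0$ is viewed in. Hence two immediate tangency packings containing $C_0$ have identical neighbor circles, and propagating this along the tangency graph shows the two packings agree. The step I expect to be the main obstacle is making this propagation reach all of $\cP$ when $\cA$ is disconnected, where tangency chains need not visit every packing circle; there I would fall back on the covering clause of Definition \ref{def:itp}, using disjointness and density of packing interiors to force each circle of $\cA$ into the closure of the interior of a uniquely determined circle of $\cP$. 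Granting uniqueness, if $M\in\Gamma(L)$ and $\cP\cap M\cP\neq\emptyset$ then the two immediate tangency packings $\cP$ and $M\cP$ share a circle and so are equal; otherwise the intersection is empty. This is exactly the second hypothesis of Theorem \ref{thm:local}, which then yields the almost local-global principle for $\cP$.
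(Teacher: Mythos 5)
Your overall strategy matches the paper's: apply Theorem \ref{thm:local} with $\cP$ itself as the arrangement, which requires (i) every circle of $\cP$ tangent to another and (ii) $\cP\cap M\cP\in\{\emptyset,\cP\}$ for $M$ in a congruence subgroup. But your verification of (i) has a genuine gap. ``Density of the interiors forces each circle to be tangent to at least one other'' is false: the paper's notion of circle packing only demands dense, disjoint interiors, and one can fill the plane densely with disjoint discs none of which touch (e.g.\ by repeatedly inscribing slightly-shrunk circles in the complementary regions), so tangency is not a formal consequence of being a packing. The paper has to \emph{prove} tangency for immediate tangency packings, and the argument is not soft: writing a packing circle as $M\widehat{\bR}$ with $|\det M|/\|(M)\|=\sqrt{D}$, it exhibits the explicit tangent circle $M\left[\begin{smallmatrix}1 & DL\sqrt{\Delta}\\ 0 & -1\end{smallmatrix}\right]\widehat{\bR}$, shows via the congruence computation of (\ref{eq:7}) that this circle lies in $\Gamma(L)M\widehat{\bR}\subseteq\cA$, and only then uses the covering clause of Definition \ref{def:itp} to force a circle of $\cP$ to be tangent to $M\widehat{\bR}$. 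This construction is the substantive content of the corollary and is absent from your proposal.

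Two further points on (ii). First, your route through ``any two immediate tangency packings sharing a circle coincide,'' propagated along the tangency graph, founders exactly where you say it does, and the fallback you offer is too vague to close the case of disconnected $\cA$. The paper sidesteps tangency chains entirely with a nesting argument: if $C_0\in M\cP\setminus\cP$, take $C\in\cP$ with $C_0\subset\overline{C_{\text{in}}}$ and $C'\in\cP$ with $M^{-1}C\subset\overline{C'_{\text{in}}}$; then $M\overline{C'_{\text{in}}}$ must contain $\overline{C_{\text{ex}}}$ (the alternative forces $MC'=C_0=C$, a contradiction), hence contains all of $\cP$, while the only circle of $M\cP$ it contains is $MC'\notin\cP$, so $\cP\cap M\cP=\emptyset$. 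Second, your justification that the stabilizer of the cosets ``is a finite-index subgroup, hence contains a principal congruence subgroup'' is not valid reasoning for Bianchi groups, where the congruence subgroup property fails; the paper instead observes directly that for $M\in\Gamma(L)$ with $L$ the index of the sublattice, the spin image $N$ is congruent to the identity mod $L$ and so fixes each coset. The conclusion you want is true, but it comes from this computation, not from finite index alone.
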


\begin{proof}Let $\cP\subset \cA\subseteq\cS_D$ as in Definition \ref{def:itp}. Since the sublattice defining $\cA$ is assumed to have full rank, it has finite index in $\cO^2\times\sqrt{\Delta}\bZ^2$. Call the index $L$, and let $\Gamma(L)\leq\text{PSL}_2(\cO)$ be the congruence subgroup of level $L$. If $M\in\Gamma(L)$ then $N$ from (\ref{eq:2}) is congruent to the identity $\text{mod}\,L$, and thus fixes each sublattice coset. In particular, $\Gamma(L)$ fixes $\cA$. 

Use $C_{\text{in}}$ and $C_{\text{ex}}$ to denote the interior and exterior of some $C\in\cA$, and let $\overline{C_{\text{in}}}=C_{\text{in}}\cup C$ and $\overline{C_{\text{ex}}}=C_{\text{ex}}\cup C$ be their closures. Let $M\in\Gamma(L)$. We claim that $\cP\cap M\cP$ is either empty or $\cP$ as Theorem \ref{thm:local} requires. Let us suppose $C_0\in M\cP$ but $C_0\not\in\cP$ and aim to show that $\cP\cap M\cP=\emptyset$. By assumption, there is $C\in\cP$ with $C_0\subset\overline{C_{\text{in}}}$, as well as $C'\in \cP$ with $M^{-1}C\subset\overline{C'_{\text{in}}}$. Then $M\overline{C'_{\text{in}}}$ contains $C$ and thus contains either $C_{\text{in}}$ or $C_{\text{ex}}$. It must be the latter: Since $MC'$ and $C_0$ are both in the circle packing $M\cP$, $M\overline{C'_{\text{in}}}\supseteq \overline{C_{\text{in}}}\supset C_0$ forces $MC'$ and $C_0$ to be the same oriented circle. But $M'C=C=C_0$ contradicts $C\in\cP$ and $C_0\not\in\cP$. So as claimed, $M\overline{C'_{\text{in}}}$ contains $\overline{C_{\text{ex}}}$ and therefore all of $\cP$. The only element of $M\cP$ in $M\overline{C'_{\text{in}}}$ is $MC'$ itself, which is not in $\cP$ by the previous sentence. Thus $\cP\cap M\cP=\emptyset$.

To apply Theorem \ref{thm:local}, it remains to check that every circle in $\cP$ is tangent to at least one other. Fix an element of $\cP$ and write it as $M\widehat{\bR}$ for some $M\in\text{PGL}_2(K)$ with $|\det M|/\|(M)\|=\sqrt{D}$. Then $$M\widehat{\bR}\hspace{0.5cm}\text{and}\hspace{0.5cm}M\begin{bmatrix}1 & DL\sqrt{\Delta}\\ 0 & -1\end{bmatrix}\widehat{\bR}$$ are tangent with disjoint interiors since the same is true of $\widehat{\bR}$ oriented positively and $\widehat{\bR}-DL\sqrt{\Delta}$ oriented negatively. Call the second circle above $C$. Using the same argument from (\ref{eq:7}), we have $$C=\left(M\begin{bmatrix}1 & DL\sqrt{\Delta}\\ 0 & 1\end{bmatrix}M^{-1}\right)M\widehat{\bR}\in\Gamma(L) M\widehat{\bR}\subseteq\Gamma(L)\cA = \cA.$$ Thus $C\subset\overline{C'_{\text{in}}}$ for some $C'\in\cP$ by assumption. But then $C'$ and $M\widehat{\bR}$ must be tangent at $C\cap M\widehat{\bR}$ for their interiors to be disjoint.\end{proof}

\begin{corollary}All Baragar-Lautzenheiser circle packings satisfy the almost local-global principle.\end{corollary}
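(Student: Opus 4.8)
The plan is to identify each Baragar--Lautzenheiser packing as an immediate tangency packing in the sense of Definition \ref{def:itp} and then apply Corollary \ref{cor:local}. Fix $D\in\bN$, set $K=\bQ(i\sqrt{D})$, and let $C_1,C_2,C_3,C_4$ and the lattice they generate be exactly as in Proposition \ref{prop:baragar}; write $\Lambda$ for the $\bZ$-span of $\bv_{C_1},\bv_{C_2},\bv_{C_3},\bv_{C_4}$. Proposition \ref{prop:baragar} already puts the four generators---and hence every unit vector of $\Lambda$---into $\cS_D$, so the whole packing $\cP$ together with all of its ambient lattice circles lies in $\cS_D$. To match Definition \ref{def:itp} I must check three things: that $H(D,\Delta)=1$, that $i\sqrt{D}\Lambda$ is a full-rank sublattice of $\cO^2\times\sqrt{\Delta}\bZ^2$, and that $\cP$ is a circle packing whose closed disks contain every lattice circle.

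The first two points are quick. Since the line $C_1$ passes through $0\in\widehat{K}$, Lemma \ref{lem:split} gives $H(D,\Delta)=1$ at once. For the second, I would read the generators in the real coordinates $(\Re\zeta,\Im\zeta,\hat{r},r)$ that parametrize circle vectors, where they become $(-1,0,0,0)$, $(1,0,2,0)$, $(1,0,0,2)$, and $(1,2\sqrt{D},2D,2)$; a single cofactor expansion along the second coordinate gives determinant $-8\sqrt{D}\neq 0$, so the four generators are $\bR$-linearly independent and $i\sqrt{D}\Lambda$ (a nonzero rescaling) has full rank inside the rank-four lattice relevant to Definition \ref{def:itp}. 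Taking the trivial coset, $\cA=\{C\in\cS_D:i\sqrt{D}\bv_C\in i\sqrt{D}\Lambda\}$ is then precisely the set of unit vectors of $\Lambda$, i.e.\ the lattice circles of the Baragar--Lautzenheiser construction, and $\cP\subset\cA$.

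The covering condition is the crux, and I expect it to be the main obstacle. Baragar and Lautzenheiser \cite{baragar} prove that the lattice circles are dense in $\widehat{\bC}$, meet one another only tangentially, and that the two bounding lines together with the maximal positively oriented lattice disks between them form a circle packing $\cP$. Because $C_1,C_2\in\Lambda$ and any lattice circle can meet either line only tangentially, every lattice circle is tangent to or disjoint from each line and hence lies in exactly one of the three closed regions cut out by $C_1$ and $C_2$: the two outer half-planes and the strip between them. With the orientations fixed in Proposition \ref{prop:baragar} the interiors of $C_1$ and $C_2$ are the two outer half-planes, so a lattice circle outside the strip is contained in $\overline{C_{1,\text{in}}}$ or $\overline{C_{2,\text{in}}}$, while one inside the strip is, as a point set, contained in the closed disk of the unique maximal packing circle lying over it. Thus every $C\in\cA$ sits in the closure of the interior of a single $C'\in\cP$, which is exactly the requirement of Definition \ref{def:itp}. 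With the three points established, Corollary \ref{cor:local} applies directly and delivers the almost local-global principle for $\cP$.
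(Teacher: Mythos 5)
Your proposal is correct and follows the same route as the paper: realize the Baragar--Lautzenheiser lattice as the full-rank sublattice of Definition \ref{def:itp} (trivial coset), check the packing is an immediate tangency packing in the resulting $\cA$, and invoke Corollary \ref{cor:local}. The paper's own proof is a two-sentence assertion of exactly these facts; you simply supply the details it leaves implicit (the Hilbert symbol via Lemma \ref{lem:split}, the rank computation, and the containment trichotomy), all of which check out.
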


\begin{proof}The rank four sublattice of $\cO^2\times\sqrt{\Delta}\bZ^2$ from Definition \ref{def:itp} is generated by $\bv_{C_1}$, $\bv_{C_2}$, $\bv_{C_3}$, and $\bv_{C_4}$ from (\ref{eq:5}) and (\ref{eq:6}). Their circle packing is constructed to be an immediate tangency packing in the resulting arrangement $\cA$.\end{proof}

With respect to circle packings, the author suspects that Theorem \ref{thm:stange}'s hypothesis is not significantly weaker than Corollary \ref{cor:local}'s. It seems at least slightly weaker, since Fuchs, Stange, and Zhang do not require $M\Gamma N\widehat{\bR}$ to have tangential intersections, while immediate tangency packings always have tangential intersections. This difference aside, the author suspects that Definition \ref{def:itp} is essentially a geometric formulation of the algebraically-phrased hypothesis of Theorem \ref{thm:stange} in the special case of circle packings.

\section{Relation to the class group}\label{sec:5}

\subsection{Geometry at a point}\label{ss:point}We first study the relationship among oriented circles in $\cS_D$, if any, that contain a fixed point in $\widehat{K}$.

\begin{definition}A \emph{family} is a maximal subset of $\cS_D$ in which any two circles satisfy $\langle\bv_C,\bv_{C'}\rangle_Q=1$. An \textit{extended family} is a maximal subset in which any two circles intersect at a fixed point with angle $\theta$ satisfying $e^{i\theta}\in\cO$.\end{definition}

All circles in a family intersect tangentially at a fixed point in $\widehat{K}$ by Proposition \ref{prop:discrete}. Also remark that if $\Delta\neq-3,-4$ the only units in $\cO$ are $1$ and $-1$. So an extended family contains two families consisting of the same set of circles with opposite orientation. 

\begin{lemma}\label{lem:matform}If $C\in\cS_D$ and $\alpha\in C\cap K$, then $C=M\widehat{\bR}$ for some $M\in\emph{PGL}_2(K)$ with left column entries $\alpha$ (top) and $1$, $(M)=(\alpha,1)$, and $|\det M|/\|(M)\|=\sqrt{D}$.\end{lemma}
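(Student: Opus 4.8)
The plan is to build the matrix $M$ explicitly from the data of $C$ and $\alpha$, using the characterization of $\cS_D$ provided by Theorem \ref{thm:redef} as the guiding principle. Since $H(D,\Delta)=1$ (this follows from Lemma \ref{lem:split}, as $C$ contains a point of $\widehat{K}$), Theorem \ref{thm:redef} already gives \emph{some} $M_0\in\text{PGL}_2(K)$ with $M_0\widehat{\bR}=C$ and $|\det M_0|/\|(M_0)\|=\sqrt{D}$. The work is to adjust $M_0$ by a suitable right-multiplication so that its left column becomes $\alpha$ (top) and $1$, while keeping it a $\widehat{\bR}$-preimage of $C$ and keeping the ratio $|\det M|/\|(M)\|$ equal to $\sqrt{D}$.

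First I would record that $\widehat{\bR}$ passes through $0$ and $\infty$, so the left column of any $M\in\text{PGL}_2(K)$ is $M(\infty)$ up to scaling: if $M=\left[\begin{smallmatrix}a & c\\ b & d\end{smallmatrix}\right]$ then $M(\infty)=a/b$. Hence to force the left column to be $(\alpha,1)^{T}$ I want $M(\infty)=\alpha$, i.e.\ I want to precompose with an element of $\text{PGL}_2(K)$ fixing $\widehat{\bR}$ that sends $\infty$ to the point $M_0^{-1}(\alpha)\in\widehat{\bR}$ (note $\alpha\in C=M_0\widehat{\bR}$ means $M_0^{-1}(\alpha)\in\widehat{\bR}$, so it is a real number, possibly $\infty$). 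Explicitly, if $s=M_0^{-1}(\alpha)\in\widehat{\bR}$, I choose $T=\left[\begin{smallmatrix}s & * \\ 1 & *\end{smallmatrix}\right]$ with rational entries such that $T(\infty)=s$ and $T\widehat{\bR}=\widehat{\bR}$ (any real Möbius transformation works), and set $M=M_0 T$ followed by a scalar normalization so that the bottom-left entry is exactly $1$. Since $T$ has entries in $\bQ$ and fixes $\widehat{\bR}$, we still have $M\widehat{\bR}=C$, and $|\det M|/\|(M)\|=\sqrt{D}$ is unchanged up to the scaling we now must track.

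The delicate point, and the one I expect to be the main obstacle, is simultaneously achieving \emph{both} normalizations $(M)=(\alpha,1)$ and $|\det M|/\|(M)\|=\sqrt{D}$ after the scalar adjustment that sets the bottom-left entry to $1$. The ratio $|\det M|/\|(M)\|$ is invariant under scaling $M$ by any $\lambda\in K^{*}$ (since $\det$ scales by $\lambda^2$, giving $|\det|$ a factor $|\lambda|^2=\|(\lambda)\|$, while $(M)$ scales by $(\lambda)$, giving $\|(M)\|$ the same factor $\|(\lambda)\|$), so that quantity remains $\sqrt{D}$ automatically and is not the problem. The real content is the ideal equality $(M)=(\alpha,1)$: once the left column is literally $(\alpha,1)^{T}$, the ideal $(M)$ certainly \emph{contains} $(\alpha,1)$, and I must argue the reverse containment, namely that the two right-column entries $\gamma,\delta$ lie in $(\alpha,1)$. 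Here I would use the determinant condition together with the fact that $|\det M|/\|(M)\|=\sqrt{D}$ pins down $\|(M)\|$, forcing $(M)$ to have the same norm as $(\alpha,1)$; since $(M)\supseteq(\alpha,1)$ and both have equal norm, they coincide. Thus the plan is: construct $M$ with the prescribed left column via the real-Möbius adjustment above, normalize the bottom-left entry to $1$ by a scalar in $K^{*}$, then invoke the norm computation to upgrade the evident containment $(\alpha,1)\subseteq(M)$ to equality, which is exactly where the hypothesis $|\det M|/\|(M)\|=\sqrt{D}$ does its work.
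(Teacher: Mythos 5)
There is a genuine gap at exactly the point you flag as delicate. Your adjustment of $M_0$ consists of a real M\"obius transformation $T$ followed by a \emph{scalar} normalization $\lambda\in K^*$, and scalars preserve the ideal class of the content ideal: $(\lambda M)=(\lambda)(M)$. Writing $\alpha',\beta'$ for the left column of $M_0T$ (so $\alpha=\alpha'/\beta'$), your final matrix has $(M)=(M_0T)/(\beta')$, while $(\alpha,1)=(\alpha',\beta')/(\beta')$. So the equality $(M)=(\alpha,1)$ you need is equivalent to $(\alpha',\beta')=(M_0T)$, i.e.\ to the left column alone generating the content ideal of the matrix. By Lemma \ref{lem:coprime} one only gets $(\det M_0)/(M_0)\subseteq(\alpha',\beta')\subseteq(M_0)$, so $(\alpha',\beta')=(M_0)\fd$ for some integral $\fd$ dividing $(\det M_0)/(M_0)^2$, and $\fd$ is genuinely nontrivial for many points of $C\cap K$ (this is the content of Lemma \ref{lem:classes}: a single circle covers several ideal classes $[(M_0)\fd]$). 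When $\fd\neq\cO$ --- in particular whenever $[(\alpha,1)]\neq[(M_0)]$ --- no scalar multiple can repair this, since $[(M)]=[(M_0)]$ for every scalar rescaling but $[(\alpha,1)]=[(M_0)\fd]$.

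The norm argument you offer to close the containment is circular: $|\det M|/\|(M)\|=\sqrt{D}$ is, as you yourself observe, invariant under scalars, so it carries no information about the normalization and merely says $\|(M)\|=|\det M|/\sqrt{D}$; nothing forces this to equal $\|(\alpha,1)\|$. Indeed $\|(M)\|=\|(\alpha,1)\|$ is equivalent to $|\det M|=\sqrt{D}\,\|(\alpha,1)\|$, which is precisely the condition that must be \emph{arranged}, not deduced. The paper does this with a non-scalar diagonal matrix: it right-multiplies by $\operatorname{diag}\bigl(1/\beta',\,\|(\alpha,1)/(M_0)\|\,\overline{\beta'}\bigr)$, which fixes $\widehat{\bR}$ (the ratio of its entries is a positive rational), leaves the left column equal to $(\alpha,1)^{T}$, but rescales the right column by the rational number $\|(\alpha,1)/(M_0)\|\,|\beta'|^2$ so as to push the right-column entries into the ideal $(\alpha,1)$; a short ideal computation then gives $(M)=(\alpha,1)$, and the determinant picks up exactly the factor needed to keep $|\det M|/\|(M)\|=\sqrt{D}$. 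That asymmetric column rescaling is the missing idea. (A smaller issue: your $T$ should be taken in $\text{PSL}_2(\bZ)$, not an arbitrary rational M\"obius map, so that Lemma \ref{lem:coprime} guarantees $(M_0T)=(M_0)$; this is available because $M_0^{-1}(\alpha)\in\widehat{\bR}\cap\widehat{K}=\widehat{\bQ}$.)
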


\begin{proof}That $C\cap K$ is nonempty implies $H(D,\Delta)=1$ by Lemma \ref{lem:split}, giving $M'\in\text{PGL}_2(K)$ with $M'\widehat{\bR}=C$ and $|\det M'|/\|(M')\|=\sqrt{D}$. We have $M'^{-1}(\alpha)\in \widehat{\bR}\cap\widehat{K}=\widehat{\bQ}$. So there is some $N\in\text{PSL}_2(\bZ)$ such that $N^{-1}M'^{-1}(\alpha)$ is the point at infinity, $1/0$. In particular, if $\alpha'$, $\beta'$, $\gamma'$ and $\delta'$ are the entries of $M'N$ (arranged as in (\ref{eq:2})), then $M'N(1/0)=\alpha$ means $\alpha'/\beta'=\alpha$. Consider the matrix $$M=M'N\begin{bmatrix}1/\beta' & 0\\ 0 & \|(\alpha,1)/(M')\|\overline{\beta'}\end{bmatrix} = \begin{bmatrix}\alpha & \|(\alpha,1)/(M')\|\overline{\beta'}\gamma' \\ 1 & \|(\alpha,1)/(M')\|\overline{\beta'}\delta'\end{bmatrix}\in\text{PGL}_2(K).$$ Recall that $(M'N)=(M')$ by Lemma \ref{lem:coprime}. The ideal generated by the top-right entry of $M$ is therefore $$\frac{\|(\alpha,1)\|(\overline{\beta'}\gamma')}{\|(M')\|}=(\alpha,1)\cdot\frac{(\gamma')}{(M')}\cdot\frac{(\overline{\alpha\beta'},\overline{\beta'})}{(\overline{M'})}=(\alpha,1)\cdot\frac{(\gamma')}{(M'N)}\cdot\frac{(\overline{\alpha'},\overline{\beta'})}{(\overline{M'N})},$$ which is contained in $(\alpha,1)$. The same is true of the bottom-right entry of $M$. This shows that $(M)=(\alpha,1)$, and $|\det M|/\|(M)\| = \|(\alpha,1)/(M')\||\det M'|/\|(\alpha,1)\| = |\det M'|/\|(M')\| = \sqrt{D}$.\end{proof}

We use $[\fa]$ to denote the ideal class of an ideal $\fa\subset K$.

\begin{theorem}\label{thm:angles}At $\alpha\in K$, there is one extended family per integral ideal of norm $D$ in $[(\alpha,1)]^2$. An intersection angle between elements in extended families corresponding to the ideals $\fD$ and $\fD'$ is the argument of a generator for $\fD/\fD'$. When scaled by $\sqrt{D/|\Delta|}$, curvatures from a single family form a congruence class $\emph{mod}\,1/\|(\alpha,1)\|$.\end{theorem}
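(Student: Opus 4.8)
The plan is to parametrize the circles through $\alpha$ by their determinants and to extract all three assertions from the principal ideal $(\det M)$. Writing $\fa=(\alpha,1)$, Lemma \ref{lem:matform} lets me present every $C\in\cS_D$ with $\alpha\in C$ as $C=M\widehat{\bR}$ where $M=\left[\begin{smallmatrix}\alpha&\gamma\\1&\delta\end{smallmatrix}\right]$, $(M)=\fa$, and $|\det M|=\sqrt{D}\,\|\fa\|$. First I would record the identity
\[
\det M=\|\fa\|\big((i\sqrt{D}\,r)\alpha-i\sqrt{D}\,\zeta\big),
\]
obtained by substituting the curvature-center $\zeta$ and curvature $r$ of Lemma \ref{lem:stange} (applied to $M/\sqrt{\det M}$) into $i\sqrt{D}(r\alpha-\zeta)$. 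Because $|\det M|=\sqrt{D}\,\|\fa\|$ is pinned down, any two admissible $M$ representing the same $C$ have determinants differing by a real sign, so the principal ideal $(\det M)$ depends only on $C$.

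Next I would establish the angle formula. Expanding $M$ near $\infty$ in the coordinate $w=1/z$ shows the tangent line to $M\widehat{\bR}$ at $\alpha=M(\infty)$ points in the direction $\det M$; hence the intersection angle at $\alpha$ of $M\widehat{\bR}$ and $M'\widehat{\bR}$ is $\theta=\arg(\det M/\det M')$ up to sign. Since $|\det M|=|\det M'|=\sqrt{D}\,\|\fa\|$, we get $e^{i\theta}=\det M/\det M'$ \emph{exactly}. In particular $\langle\bv_C,\bv_{C'}\rangle_Q=\cos\theta=1$ forces $\det M=\det M'$, so $\det M$ is constant on each family; and the extended-family condition $e^{i\theta}\in\cO$ holds iff $\det M/\det M'$ is a unit, iff $(\det M)=(\det M')$. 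Thus the extended families at $\alpha$ are exactly the fibers of $C\mapsto(\det M)$.

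With this I would define $\fD:=(\det M)\,\overline{\fa}^{\,-2}$. Its norm is $|\det M|^2/\|\fa\|^2=D$ and its class is $[\overline{\fa}]^{-2}=[\fa]^2$, both immediate, while $\fD=\fD'$ iff $(\det M)=(\det M')$, matching the extended-family fibers. The angle between the extended families attached to $\fD$ and $\fD'$ is then $\arg(\det M/\det M')$, and $\fD/\fD'=(\det M/\det M')$, so this is the argument of a generator of $\fD/\fD'$, giving the second assertion. For the first assertion it remains to show $\fD$ is integral and that the map is onto the integral ideals of norm $D$ in $[\fa]^2$; integrality is the containment $\det M\in\overline{\fa}^2$, and surjectivity (given such a $\fD$) is the solvability of $\alpha\delta-\gamma=d$ with $\gamma,\delta\in\fa$ for a generator $d$ of $\fD\overline{\fa}^2$. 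I expect this to be the main obstacle: a term-by-term ideal chase stalls because the obvious containments produce the factor $\fa\overline{\fa}$ rather than $\overline{\fa}^2$. The intended route is to feed all three membership conditions $i\sqrt{D}\,\zeta\in\cO$ and $i\sqrt{D}\,r,\ i\sqrt{D}\,\hat r\in\sqrt{\Delta}\bZ$, together with the scaled unit relation $|i\sqrt{D}\,\zeta|^2+(i\sqrt{D}\,r)(i\sqrt{D}\,\hat r)=D$ and the equation expressing that $\alpha$ lies on $C$, into the displayed identity, so that the imaginary ($\sqrt{\Delta}$) parts and the cocurvature combine to absorb the excess $\fa$-factor.

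Finally, for the curvatures I would fix one family, so $\det M=d$ is constant and a member is determined by $\delta$ with $\gamma=\alpha\delta-d$; the admissible $\delta$ form a coset of $\Lambda:=\{\eta:\eta\in\fa,\ \alpha\eta\in\fa\}=\fa\cap\alpha^{-1}\fa$. The key point is $\Lambda=\cO$, which falls out of inverse ideals: $\Lambda^{-1}=\fa^{-1}+\alpha\fa^{-1}=(1,\alpha)\fa^{-1}=\fa\fa^{-1}=\cO$. The scaled curvature of $M$ is
\[
\tfrac{i\sqrt{D}\,r}{\sqrt{\Delta}}=\frac{\delta-\overline{\delta}}{\sqrt{\Delta}\,\|\fa\|},
\]
an integer by membership in $\cS_D$, and it depends on $\delta$ only through $\Im\delta$. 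As $\delta$ ranges over its coset of $\Lambda=\cO$, the quantity $\Im\delta$ ranges over a coset of $\Im(\cO)=(\sqrt{|\Delta|}/2)\bZ$, so the scaled curvatures sweep out a full coset of $\tfrac{1}{\|\fa\|}\bZ$, i.e.\ exactly one congruence class modulo $1/\|(\alpha,1)\|$, as claimed.
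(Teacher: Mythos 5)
Your framework (the invariant $(\det M)$, extended families as its fibers, and the tangent-direction computation for the angle) is sound and genuinely different from the paper, which instead exhibits each extended family explicitly as $\bigl\{M_0\left[\begin{smallmatrix}1&\lambda\\0&\mu\end{smallmatrix}\right]\widehat{\bR}:\lambda\in\cO,\ (\mu)=\fD/\fD_0\bigr\}$. Your curvature argument via $\Lambda=\fa\cap\alpha^{-1}\fa=\cO$ is also correct and clean. But the step you flag as ``the main obstacle'' is a genuine gap, and the route you propose for closing it cannot work because the claim is false as posed: with $\fD:=(\det M)\,\overline{\fa}^{\,-2}$ one does \emph{not} have $\det M\in\overline{\fa}^{\,2}$ in general. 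Lemma \ref{lem:coprime} gives $(\det M)\subseteq (M)^2=\fa^2$, and writing $(\det M)=\fa^2\fD_0$ with $\fD_0$ integral of norm $D$, integrality of your $\fD$ would force $\fa^2\fD_0\subseteq\overline{\fa}^{\,2}$; taking $\fa=\fp^{-1}$ for a split prime $\fp$ this says $\fp^2\mid\fD_0$, which fails whenever $\|\fa\|^2\nmid D$. No amount of feeding the membership conditions into your determinant identity will absorb the excess factor, because the conjugate copy of $\fa$ simply is not there. The integral object is $(\det M)/(M)^2=(\det M)\fa^{-2}$, which lies in $[(\alpha,1)]^{-2}$ rather than $[(\alpha,1)]^2$; one then matches the statement either by composing with ideal conjugation (a norm-preserving bijection between integral ideals in inverse classes, which also leaves the quotient $\fD/\fD'$ unchanged up to conjugation, hence leaves the unsigned angle unchanged) or by noting, as the paper implicitly does, that the two indexings give the same count.

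The second half of the first assertion --- surjectivity onto all integral ideals of norm $D$ in the class --- is also left unproved. It is not hard, but it is where the existence content lives: given such a $\fD_0$, the principal ideal $\fa^2\fD_0$ equals $\alpha\cdot(\fa\fD_0)+1\cdot(\fa\fD_0)$, so a generator $d$ can be written as $\alpha\delta-\gamma$ with $\gamma,\delta\in\fa\fD_0\subseteq\fa$, producing an admissible $M$ with $(\det M)=\fa^2\fD_0$; this is exactly the paper's construction of $M_0$. You should also record why distinct ideals give distinct (nonempty) extended families rather than only a well-defined map. With the corrected normalization and these two additions your argument goes through; as written, the first assertion of the theorem is not established.
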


\begin{proof}If $\fD_0\in [(\alpha,1)]^{2}$ is integral with norm $D$, then we can find $\gamma,\delta\in (\alpha,1)\fD_0$ with $(\alpha\delta-1\gamma)=(\alpha,1)^2\fD_0$. So let $M_0$ have entries as in (\ref{eq:2}) but with $\beta=1$. Then $(M_0)=(\alpha,1)$ and $|\det M_0|/\|(M_0)\|=\sqrt{\|\fD_0\|}=\sqrt{D}$. 

For an integral ideal $\fD\in[(\alpha,1)]^2$ of norm $D$, we will show that \begin{equation}\label{eq:8}\left\{M_0\begin{bmatrix}1 & \lambda \\ 0 & \mu\end{bmatrix}\widehat{\bR}\;\bigg|\;\lambda\in\cO,\,(\mu)=\fD/\fD_0\right\}\end{equation} is an extended family, and that a family is defined by fixing the generator $\mu$. 

But first, the angle between two circles from (\ref{eq:8}), one with matrix entries $\lambda$ and $\mu$ as above and the other with entries $\lambda'$ and $\mu'$, is the angle between $\widehat{\bR}$ and \begin{equation}\left(M_0\begin{bmatrix}1 & \lambda\\ 0 & \mu\end{bmatrix}\right)^{\!-1}\!M_0\begin{bmatrix}1 & \lambda'\\ 0 & \mu'\end{bmatrix}\widehat{\bR}=\begin{bmatrix}1 & (\lambda'\mu-\lambda\mu')/\mu\\ 0 & \mu'/\mu\end{bmatrix}\widehat{\bR}.\label{eq:9}\end{equation} This angle is evidently the argument of $\mu'/\mu$, which generates $\fD'/\fD$ assuming $(\mu)=\fD/\fD_0$ and $(\mu')=\fD'/\fD_0$. In particular, if $\fD=\fD'$ then $\mu'/\mu\in\cO$, showing that each set of the form (\ref{eq:8}) is contained in a single extended family. Moreover, if $\mu=\mu'$ then the oriented circle in (\ref{eq:9}), call it $C$, has curvature-center $i$ by Lemma \ref{lem:stange}. Since $\bv_{\widehat{\bR}}=[i\;-i\;\,0\;\,0]$, we get $\langle\bv_{\widehat{\bR}},\bv_C\rangle_Q=1$. So for fixed $\mu$, (\ref{eq:8}) is contained in a single family.  

Next, using the formula from Lemma \ref{lem:stange}, scaling a curvature from (\ref{eq:8}) by $\sqrt{D/|\Delta|} = |\mu\det M_0|/\sqrt{|\Delta|}\|(\alpha,1)\|$ gives $$\frac{2\Im(\mu\delta)}{\sqrt{\Delta|}\|(\alpha,1)\|} + \frac{2\Im(\lambda)}{\sqrt{\Delta|}\|(\alpha,1)\|}.$$ By fixing $\mu$ and varying $\lambda$ we obtain curvatures from the same family that produce a full congruence class $\text{mod}\,1/\|(\alpha,1)\|$.

It remains only to check that every circle in $\cS_D$ containing $\alpha$ can be expressed as in (\ref{eq:8}) for some $\lambda$ and $\mu$. By Lemma \ref{lem:matform}, every such circle is of the form $M\widehat{\bR}$ for some $M\in\text{PGL}_2(K)$ with left column entries $\alpha$ and $1$, $(M)=(\alpha,1)$, and $(\det M)/(M)^2=\fD$ for some $\fD\subseteq\cO$ of norm $D$. Let $\gamma'$ and $\delta'$ denote the right column entries of $M$. We have $$M_0^{-1}M=\begin{bmatrix}1 & (\gamma'\delta-\gamma\delta')/\det M_0 \\ 0 & \det M/\det M_0\end{bmatrix}.$$ First observe that $(\det M/\det M_0)=\fD(M)^2/\fD_0(M_0)^2=\fD(\alpha,1)^2/\fD_0(\alpha,1)^2=\fD/\fD_0$. Next observe that $(\gamma'\delta-\gamma\delta')\in (\gamma',\delta')(\gamma,\delta)\subseteq (M)(\gamma,\delta)=(\alpha,1)(\gamma,\delta)=(\alpha,1)^2\fD_0=(\det M_0)$. Therefore the upper-right entry above is some integer $\lambda$.\end{proof}

As an example, recall from Figure \ref{fig:3} the claim that certain ideal classes of $\bQ(i\sqrt{39})$ can be distinguished in $\cS_4$. This field has a cyclic class group of order four, generated by $[\fp_2]$ for either prime $\fp_2$ over $2$. The only integral ideal of norm $4$ in the principal ideal class is $(2)$. So according to the theorem, if $[(\alpha,1)]^2=[\cO]$ (meaning $(\alpha,1)\in[\cO]$ or $(\alpha,1)\in[\fp_2]^2$) then $\cS_4$ has only one extended family at $\alpha$. These are points in Figure \ref{fig:3} with only tangential intersection, as $\bZ[(1+i\sqrt{39})/2]$ has trivial unit group. On the other hand, there are two integral ideals of norm $4$ in $[\fp_2]^2$: $\fp_2^2$ and $\overline{\fp_2}^2$. Therefore when $(\alpha,1)\in[\fp_2]$ or $(\alpha,1)\in[\fp_2]^3$ there are two extended families at $\alpha$. The angle between them is the argument of a generator for $\fp_2^2/\overline{\fp_2}^2$, which is $(5\pm i\sqrt{39})/8$ depending on the choice of $\fp_2$. 

\begin{definition}\label{def:cover}An arrangement $\cA$ \textit{covers} $\alpha$ if $\alpha\in C$ for some $C\in \cA$. In this case, if $\alpha\in K$ we say $\cA$ \textit{covers} the ideal class corresponding to $\alpha$, which is $[(\alpha,1)]$.\end{definition}

\begin{corollary}\label{cor:classes}A point $\alpha\in K$ is covered by $\cS_D$ if and only if $[(\alpha,1)]^2$ contains an integral ideal of norm $D$. In particular, whether or not $\alpha$ is covered depends only the coset of the $2$-torsion subgroup of the class group to which $[(\alpha,1)]$ belongs.\end{corollary}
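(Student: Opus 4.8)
The plan is to obtain both assertions as formal consequences of Theorem~\ref{thm:angles} together with the elementary structure of the squaring map on the class group; no new geometry is needed once that theorem is in hand.

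For the biconditional I would first reduce covering to the existence of an extended family. By Definition~\ref{def:cover}, $\alpha$ is covered by $\cS_D$ exactly when some $C\in\cS_D$ passes through $\alpha$. The circles of $\cS_D$ through $\alpha$ are partitioned by the extended families at $\alpha$ (each circle lies in a unique maximal such subset), so $\alpha$ is covered if and only if there is at least one extended family at $\alpha$. Theorem~\ref{thm:angles} places the extended families at $\alpha$ in bijection with the integral ideals of norm $D$ lying in the class $[(\alpha,1)]^2$. Chaining these equivalences, $\alpha$ is covered if and only if $[(\alpha,1)]^2$ contains an integral ideal of norm $D$, which is the first claim. (Consistently, when $H(D,\Delta)=-1$ no point of $K$ is covered by Lemma~\ref{lem:split}; the construction in Theorem~\ref{thm:angles} would otherwise produce circles of $\cS_D$ through $\alpha\in K$, so in that case no such ideal exists either.)

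For the ``in particular'' statement I would invoke only group theory. Writing $\mathrm{Cl}(K)$ for the class group, the squaring map $c\mapsto c^2$ is an endomorphism of this finite abelian group whose kernel is precisely the $2$-torsion subgroup $\mathrm{Cl}(K)[2]$, since $c^2=1$ is equivalent to $c\in\mathrm{Cl}(K)[2]$. Hence $c^2$ is constant on each coset of $\mathrm{Cl}(K)[2]$, and the induced map on $\mathrm{Cl}(K)/\mathrm{Cl}(K)[2]$ is injective, so $c^2$ both depends only on and determines the coset $c\,\mathrm{Cl}(K)[2]$. Taking $c=[(\alpha,1)]$, the ideal class $[(\alpha,1)]^2$---and therefore, by the first part, whether $\alpha$ is covered---depends only on the coset of $\mathrm{Cl}(K)[2]$ to which $[(\alpha,1)]$ belongs.

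Because Theorem~\ref{thm:angles} already supplies the exact bijection, the corollary is essentially bookkeeping and I expect no genuine obstacle. The only point requiring care is the first equivalence: one must be certain that ``$\alpha$ is covered'' really coincides with ``at least one extended family at $\alpha$ exists,'' i.e.\ that \emph{every} circle of $\cS_D$ through $\alpha$ is accounted for among the families enumerated by Theorem~\ref{thm:angles}. This exhaustiveness is guaranteed by Lemma~\ref{lem:matform}, which puts every circle of $\cS_D$ through $\alpha\in K$ into the normal form used in Theorem~\ref{thm:angles}, with Lemma~\ref{lem:split} covering the degenerate $H(D,\Delta)=-1$ case, so the counting is complete and the biconditional is two-sided.
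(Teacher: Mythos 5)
Your proposal is correct and follows essentially the same route as the paper: the paper's proof likewise reduces covering to the existence of at least one extended family at $\alpha$ and then cites Theorem \ref{thm:angles} for the bijection with integral ideals of norm $D$ in $[(\alpha,1)]^2$. Your added remarks on exhaustiveness via Lemma \ref{lem:matform} and the kernel of the squaring map are just explicit versions of steps the paper leaves implicit.
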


\begin{proof}A point $\alpha\in \widehat{K}$ is covered if and only if $\cS_D$ has at least one extended family at $\alpha$. By Theorem \ref{thm:angles}, this is equivalent to $[(\alpha,1)]^2$ containing at least one integral ideal of norm $D$.\end{proof}

So in $\cS_D$, covering an ideal class implies every point corresponding to that ideal class is covered, and indeed every point corresponding to an ideal class in the same $2$-torsion coset.

We can also say something about the geometry of $\cS_D$ around $\alpha\in K$ that are not covered. These points exhibit the typical repulsion property of rational numbers in Diophantine approximation. Figure \ref{fig:5} shows $\cS_{39}$ for $\bQ(i\sqrt{39})$, which only covers the two $2$-torsion ideal classes in the class group. A red forbidden zone has been drawn around uncovered points in $K$ like $(1+i\sqrt{39})/4$, the focal point of the image. The radius of a red dot is computed from the next proposition with $r=1200$, because Figure \ref{fig:5} shows curvatures up to $1200$.

\begin{proposition}\label{prop:holes}If $\alpha\in K$ is not on some $C\in\cS_D$ of curvature $r$, the distance between $C$ and $\alpha$ is at least $(\sqrt{2}-1)\min(d,\sqrt{d/|r|})$, where $d=\sqrt{|\Delta|}\|(\alpha,1)\|/\sqrt{D}$.\end{proposition}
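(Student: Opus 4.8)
The plan is to convert the statement into a lower bound on the ``power'' of $\alpha$ relative to $C$ and then translate that bound into a distance. Represent the point $\alpha$ by the null vector $\bv_\alpha=[\alpha\;\,\overline{\alpha}\;\,|\alpha|^2\;\,1]$, which satisfies $\langle\bv_\alpha,\bv_\alpha\rangle_Q=0$, and set $P:=-2\langle\bv_C,\bv_\alpha\rangle_Q=r|\alpha|^2-2\Re(\overline{\alpha}\zeta)+\hat r$, where $\bv_C=[\zeta\;\,\overline\zeta\;\,\hat r\;\,r]$. This real number vanishes exactly when $\alpha\in C$. A short Euclidean computation using $|\zeta|^2-r\hat r=1$ shows that when $r\neq 0$ one has $|P|=|r|\cdot\mathrm{dist}(\alpha,C)\cdot(\rho+R)$ with $\rho=|\alpha-\zeta/r|$ and $R=1/|r|$, while when $r=0$ the circle is a line with unit normal $\zeta$ and $\mathrm{dist}(\alpha,C)=|P|/2$. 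So everything reduces to a sharp lower bound on $|P|$.

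First I would establish the arithmetic bound $|P|\ge d$. Scaling by $i\sqrt D$ and using $i\sqrt D\,\bv_C\in\cO^2\times\sqrt\Delta\bZ^2$ gives
\[
i\sqrt D\,P \;=\; s|\alpha|^2+h-(z\overline\alpha-\overline z\alpha),
\]
where $z=i\sqrt D\,\zeta\in\cO$ and $s=i\sqrt D\,r,\ h=i\sqrt D\,\hat r\in\sqrt\Delta\bZ$. Each summand lies in $\sqrt\Delta\bQ$, so $t:=i\sqrt D\,P/\sqrt\Delta\in\bQ$ and $|P|=\sqrt{|\Delta|/D}\,|t|$. Writing $s=s_0\sqrt\Delta$ and $h=h_0\sqrt\Delta$ and $w(z):=(z\overline\alpha-\overline z\alpha)/\sqrt\Delta$, our $t=s_0|\alpha|^2+h_0-w(z)$ lies in the subgroup $G_0\le\bQ$ generated by $1$, $|\alpha|^2$, and $\{w(z):z\in\cO\}$. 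Let $\fb=(\alpha,1)^{-1}$, an integral ideal (since $(\alpha,1)\supseteq\cO$) of norm $\|\fb\|=1/\|(\alpha,1)\|$. The crux is that $\|\fb\|\in\fb$, and because $\fb=\{\lambda\in\cO:\lambda\alpha\in\cO\}$ this forces $\beta:=\|\fb\|\alpha\in\cO$; hence $\|\fb\|w(z)=(z\overline\beta-\overline z\beta)/\sqrt\Delta\in\bZ$, using $z\overline\beta\in\cO$ and the fact that $(\theta-\overline\theta)/\sqrt\Delta\in\bZ$ for every $\theta\in\cO$. Likewise $\|\fb\|\cdot 1\in\bZ$, and $\|\fb\|\,|\alpha|^2\in\bZ$ since $(\alpha)\fb\subseteq\cO$ is integral of norm $\|\fb\|\,|\alpha|^2$. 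Thus $\|\fb\|\,G_0\subseteq\bZ$, i.e.\ $G_0\subseteq\|(\alpha,1)\|\bZ$; as $\alpha\notin C$ we have $t\neq 0$, so $|t|\ge\|(\alpha,1)\|$ and $|P|\ge\sqrt{|\Delta|/D}\,\|(\alpha,1)\|=d$.

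Then I would finish by turning $|P|\ge d$ into the distance estimate, writing $\delta=\mathrm{dist}(\alpha,C)$. In the line case $\delta=|P|/2\ge d/2\ge(\sqrt2-1)d$, and $\min(d,\sqrt{d/|r|})=d$ there. For $r\neq 0$, the relation above reads $|P|=2\delta+|r|\delta^2$ when $\alpha$ is exterior to $C$ and $|P|=2\delta-|r|\delta^2$ (with $\delta\le 1/|r|$) when $\alpha$ is interior. Solving the quadratics with $|P|\ge d$ gives $\delta\ge(\sqrt{1+|r|d}-1)/|r|$ (exterior) and $\delta\ge(1-\sqrt{1-|r|d})/|r|$ (interior, which forces $|r|d\le 1$). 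The bound then rests on the elementary inequalities $\sqrt{1+x}-1\ge(\sqrt2-1)x$ and $1-\sqrt{1-x}\ge(\sqrt2-1)x$ on $[0,1]$, together with $\sqrt{1+x}-1\ge(\sqrt2-1)\sqrt x$ on $[1,\infty)$; these produce the constant $\sqrt2-1$ and the dichotomy $\min(d,\sqrt{d/|r|})$, according to whether $|r|d\le 1$ (so $\min=d$) or $|r|d\ge 1$ (so $\min=\sqrt{d/|r|}$, equivalently $d\le\sqrt{d/|r|}$ or not).

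I expect the main obstacle to be the arithmetic step: recognizing that the admissible values of $i\sqrt D\,P$ fill the lattice $\sqrt\Delta\,\|(\alpha,1)\|\bZ$, and pinning their denominator via the denominator ideal $\fb=(\alpha,1)^{-1}$ and the identity $\|\fb\|\alpha\in\cO$. Once $|P|\ge d$ is in hand, the geometric conversion is routine calculus, with the degenerate $r=0$ case handled separately.
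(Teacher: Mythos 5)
Your proof is correct and follows essentially the same route as the paper's: both arguments come down to showing that $i\sqrt{D}\bigl(r|\alpha|^2-2\Re(\overline{\alpha}\zeta)+\hat{r}\bigr)$ is a nonzero element of $\sqrt{\Delta}\,\|(\alpha,1)\|\,\bZ$ and then converting that lower bound into a distance estimate via the same quadratic in $\bigl||\zeta/r-\alpha|-1/|r|\bigr|$, with the identical case split on $|r|d\lessgtr 1$ producing the constant $\sqrt{2}-1$. The only real difference is presentational (you phrase the key quantity as a power of the point via the null vector $\bv_\alpha$), and you supply a careful justification, via the denominator ideal $(\alpha,1)^{-1}$, of the integrality claim that the paper simply asserts.
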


\begin{proof}Suppose $\alpha\not\in C\in\cS_D$. Let $\bv_C=[\zeta\;\,\overline{\zeta}\;\,\hat{r}\;\,r]$. Recall that when $r=0$, $\zeta$ is a unit vector orthogonal to $C$. So the distance between $\alpha$ and $C$ is the scalar projection of $\alpha$ onto $\zeta$, which is $|\Re(\zeta\overline{\alpha})|=|2\Im(i\sqrt{D}\zeta\overline{\alpha})|/2\sqrt{D}$. As $2\Im(i\sqrt{D}\zeta\overline{\alpha})$ is an integer multiple of $\sqrt{|\Delta|}\|(\alpha,1)\|$, it follows that $|\Re(\zeta\overline{\alpha})|\geq d/2 > (\sqrt{2}-1)d$.

Now suppose $r\neq 0$. From $|\zeta|^2=1+\hat{r}r$ we have the second equality below: $$D|\zeta-r\alpha|^2=D(|\zeta|^2-2r\Re(\zeta\overline{\alpha})+r^2|\alpha|^2)=$$ $$D-i\sqrt{D}r\big(i\sqrt{D}\hat{r}-2i\Im(i\sqrt{D}\zeta\overline{\alpha})+i\sqrt{D}r|\alpha|^2\big).$$ In parentheses on the bottom line is a nonzero (since $\alpha\not\in C$) integer multiple of $\sqrt{\Delta}\|(\alpha,1)\|$. In particular, $D|\zeta-r\alpha|^2$ is either at least $D+\sqrt{D|\Delta|}\|(\alpha,1)\||r|=D(1+d|r|)$ or at most $D-\sqrt{D|\Delta|}\|(\alpha,1)\||r|=D(1-d|r|)$. Thus if $|r|<1/d$, the distance between $C$ and $\alpha$ is at least $$\left|\left|\frac{\zeta}{r}-\alpha\right|-\frac{1}{|r|}\right|\geq \frac{1}{|r|}\min\left(\sqrt{1+d|r|}-1,-\sqrt{1-d|r|}+1\right)>\frac{\sqrt{2}-1}{|r|} >(\sqrt{2}-1)d.$$ 

On the other hand, if $|r|\geq 1/d$ then it is not possible for $D|\zeta-r\alpha|^2$, which is positive, to be at most $D(1-d|r|)$. So there is no need for the min function above. The distance between $C$ and $\alpha$ is at least $$\frac{1}{|r|}\left(\sqrt{1+d|r|}-1\right)=\sqrt{\frac{d}{|r|}}\left(\sqrt{\frac{1}{d|r|}+1}-\sqrt{\frac{1}{d|r|}}\right)\geq(\sqrt{2}-1)\sqrt{\frac{d}{|r|}},$$ where the last inequality uses $|r|\geq 1/d$ again.\end{proof}

\begin{figure}[t]
    \centering
    \includegraphics[height=0.64\textwidth,clip,trim=0cm 0cm 0cm 6.8cm]{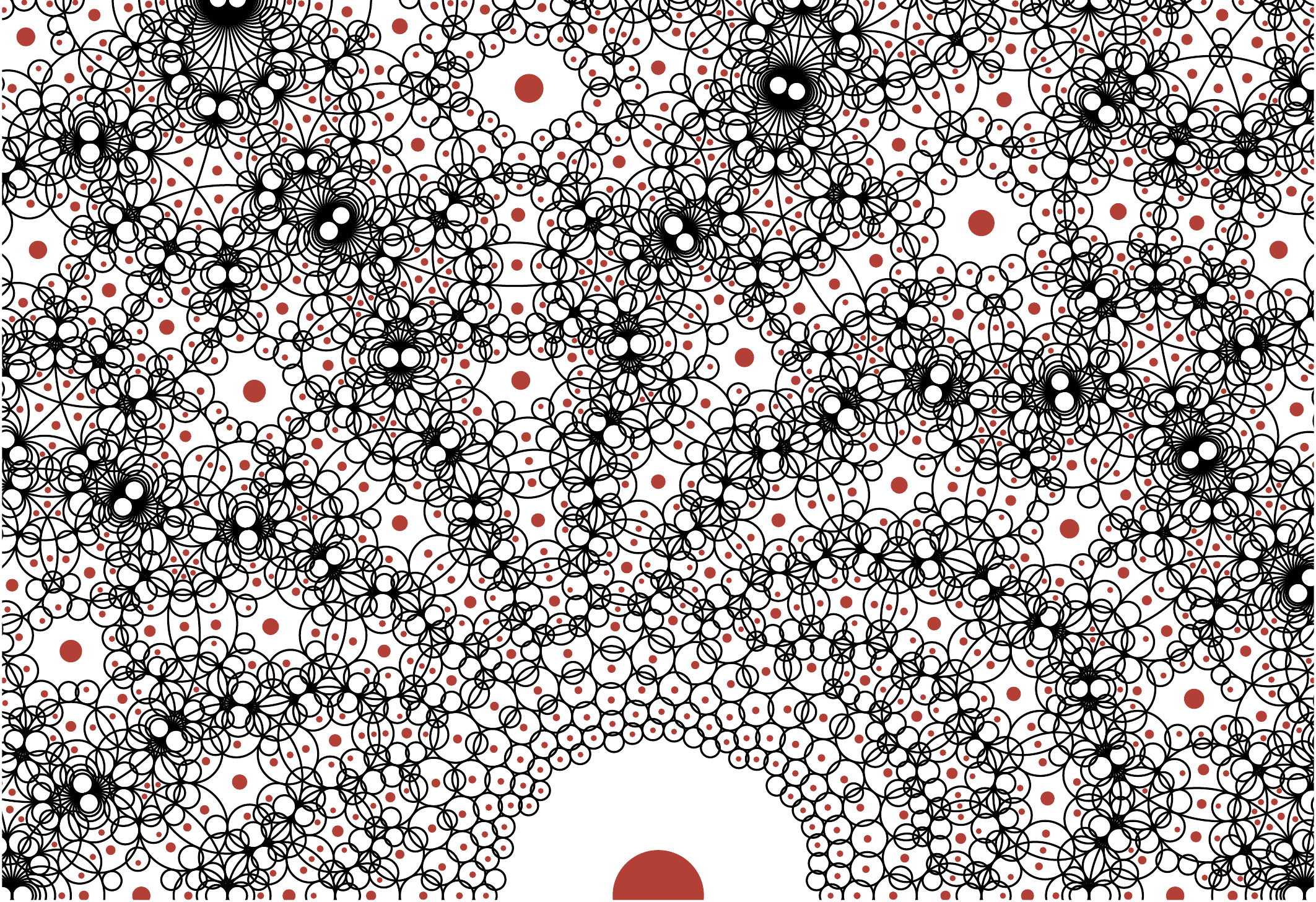}
    \captionsetup{width=0.936\textwidth}
    \caption{$\cS_{39}$ for $\bQ(i\sqrt{39})$ keeping away from uncovered rational points.}\label{fig:5}
\end{figure}

\subsection{Connectivity and the class group} Stange proved that $\cO$ is Euclidean if and only if the Schmidt arrangement (or $\cS_1$) is topologically connected \cite{stange}. The relationship with Euclideaneity is extended to arbitrary $\cS_D$ in the author's dissertation, where it is shown how each step in a pseudo-Euclidean algorithm corresponds to one ``step" along a chain of circles \cite{martin}. This observation inspired a continued fraction algorithm in which the resulting approximations are chain intersection points from the ``walk" through $\cS_D$. The forthcoming example surrounding Figure \ref{fig:6} hints at the algorithm, but it is not stated here (see \cite{martin2}). The focus of this subsection is consequences of such pseudo-Euclideaneity for the class group. This is to say that Theorem \ref{thm:fincon} and Corollary \ref{cor:finrat} should be thought of as generalizations of the classical statement ``Euclidean implies principal ideal domain."

\begin{lemma}\label{lem:classes}Let $M\in\emph{PGL}_2(K)$. If $\alpha\in M\widehat{\bR}\cap K$ then $[(\alpha,1)]=[(M)\fd]$ for some integral ideal $\fd$ dividing $(\det M)/(M)^2$.\end{lemma}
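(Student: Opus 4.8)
The plan is to reduce the statement to Lemma \ref{lem:coprime} by first observing that the $M$-preimage of $\alpha$ is not merely in $\widehat{K}$ but in fact in $\widehat{\bQ}$. Since $\alpha\in M\widehat{\bR}$ we have $M^{-1}(\alpha)\in\widehat{\bR}$, and since $\alpha\in K$ while $M\in\text{PGL}_2(K)$ we also have $M^{-1}(\alpha)\in\widehat{K}$. Because $K$ is imaginary quadratic, $\widehat{\bR}\cap\widehat{K}=\widehat{\bQ}$, exactly as exploited in the proof of Lemma \ref{lem:matform}. So I may write $M^{-1}(\alpha)=\lambda/\mu$ with $\lambda,\mu\in\bZ$ coprime (taking $\lambda=1$, $\mu=0$ if the preimage is the point at infinity). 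Coprimality in $\bZ$ gives $(\lambda,\mu)=\cO$ as an ideal of $\cO$, which will be the crucial simplification.

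Next I would compute $\alpha=M(\lambda/\mu)$ explicitly as the ratio of the two entries of the column vector obtained by applying $M$ to $[\lambda\;\;\mu]^{T}$. Writing the entries of $M$ as $p,q,r,s$ (to avoid clashing with the point $\alpha$), those two entries are $p\lambda+r\mu$ and $q\lambda+s\mu$, so that $\alpha=(p\lambda+r\mu)/(q\lambda+s\mu)$ with $q\lambda+s\mu\neq 0$ since $\alpha$ is finite. Hence the fractional ideal $(\alpha,1)$ equals $(p\lambda+r\mu,\,q\lambda+s\mu)$ up to the principal factor $1/(q\lambda+s\mu)$, giving $[(\alpha,1)]=[(p\lambda+r\mu,\,q\lambda+s\mu)]$.

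Now I apply Lemma \ref{lem:coprime} with these $\lambda,\mu$. Because $(\lambda,\mu)=\cO$, the two-sided containment there collapses to
$$(\det M)/(M)\;\subseteq\;(p\lambda+r\mu,\,q\lambda+s\mu)\;\subseteq\;(M).$$
Writing $\fb$ for the middle ideal, the right containment says $(M)\mid\fb$, so $\fb=(M)\fd$ for an integral ideal $\fd$; the left containment says $\fb\mid(\det M)/(M)$, so $(\det M)/(M)=\fb\fe$ for an integral ideal $\fe$. Combining these, $(\det M)/(M)^2=\fd\fe$, whence $\fd\mid(\det M)/(M)^2$. Since $[(\alpha,1)]=[\fb]=[(M)\fd]$, this is precisely the claim.

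The main obstacle — really the only step that is not mechanical ideal bookkeeping — is the opening observation that $M^{-1}(\alpha)$ lands in $\widehat{\bQ}$ rather than merely in $\widehat{K}$. This is what permits the choice of genuinely coprime \emph{integers} $\lambda,\mu$, so that $(\lambda,\mu)=\cO$ and the sandwich of Lemma \ref{lem:coprime} becomes the clean chain above. Without rationality one would only obtain $(\lambda,\mu)(\det M)/(M)\subseteq\fb\subseteq(\lambda,\mu)(M)$, and the stray factor $(\lambda,\mu)$ would obstruct the divisibility conclusion $\fd\mid(\det M)/(M)^2$. Everything after that step is routine manipulation in the ideal class group, using only that $(\det M)/(M)^2$ is integral (as $(\det M)\subseteq(M)^2$).
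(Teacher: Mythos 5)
Your proof is correct and follows essentially the same route as the paper: both parametrize the point as $M(\lambda/\mu)$ with $\lambda,\mu$ coprime integers (using $\widehat{\bR}\cap\widehat{K}=\widehat{\bQ}$), apply Lemma \ref{lem:coprime} with $(\lambda,\mu)=\cO$, and set $\fd$ equal to the entry ideal divided by $(M)$. The only difference is that you spell out the divisibility bookkeeping that the paper leaves as the containment $(\det M)/(M)^2\subseteq\fd\subseteq\cO$.
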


\begin{proof}Let $M$ have entries as in (\ref{eq:2}). Any point in $C\cap K$ is of the form $M(p/q) = (\alpha p+\gamma q)/(\beta p+\delta q)$ for some $p/q\in\widehat{\bQ}$. Assuming $p,q\in\bZ$ are coprime, we have $(\det M)/(M)\subseteq (\alpha p+\gamma q,\beta p+\delta q)\subseteq (M)$ by Lemma \ref{lem:coprime}.  Then setting $\fd = (\alpha p+\gamma q,\beta p+\delta q)/(M)$ gives $(\det M)/(M)^2\subseteq\fd\subseteq\cO$ as well as $[(M(p/q),1)]=[(\alpha p+\gamma q,\beta p+\delta q)] = [(M)\fd]$.\end{proof}

Note that the quotient of ideal classes corresponding to two points on the same circle in $\cS_D$ is $[\fd/\fd']$ for some $\fd,\fd'\subseteq\cO$ with norms dividing $D$. This prompts the next definition.  

\begin{definition}\label{def:ratcon}An arrangement $\cA$ is \emph{rationally connected} if for any $C,C'\in\cA$ there is a chain $C=C_0,C_1,...,C_n=C'\in \cA$ such that $C_{k-1}\cap C_k\cap\widehat{K}\neq\emptyset$ for all $k$.\end{definition}

\begin{proposition}\label{prop:ratcon}The ideal classes covered by a rationally connected subset of $\cS_D$ are contained in a single coset of the subgroup generated by ideal classes of primes containing $D$.\end{proposition}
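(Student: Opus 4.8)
The plan is to show that moving from one circle to an adjacent circle in a rational chain changes the covered ideal class only by a factor lying in the subgroup $G$ generated by ideal classes of primes containing $D$. Since $\cS_D$ covers a point $\alpha$ exactly when $[(\alpha,1)]^2$ contains an integral ideal of norm $D$ (Corollary \ref{cor:classes}), the natural invariant to track along the chain is the ideal class $[(\alpha,1)]$ of each rational intersection point, and I want to argue that consecutive intersection points have ideal classes differing by an element of $G$.

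\medskip

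First I would fix a circle $C\in\cS_D$ and two rational points $\alpha,\alpha'\in C\cap\widehat{K}$ on it. Writing $C=M\widehat{\bR}$ with $M\in\text{PGL}_2(K)$ satisfying $|\det M|/\|(M)\|=\sqrt{D}$ (available since a rational point forces $H(D,\Delta)=1$ by Lemma \ref{lem:split}), Lemma \ref{lem:classes} gives $[(\alpha,1)]=[(M)\fd]$ and $[(\alpha',1)]=[(M)\fd']$ for integral ideals $\fd,\fd'$ dividing $(\det M)/(M)^2$. The key arithmetic fact is that $(\det M)/(M)^2$ is an integral ideal of norm $D$, so every prime factor of $\fd$ and of $\fd'$ contains $D$. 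Consequently $[\fd]$ and $[\fd']$ both lie in $G$, and therefore
\begin{equation*}
[(\alpha',1)][(\alpha,1)]^{-1}=[\fd'][\fd]^{-1}\in G.
\end{equation*}
This shows that the ideal class attached to an intersection point is well-defined modulo $G$ as we move across a single circle; that is, all rational points on a common circle of $\cS_D$ give the same coset of $G$.

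\medskip

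Next I would chain this together. Given the rational connectivity hypothesis, any two circles $C,C'$ in the subset are joined by a sequence $C=C_0,C_1,\dots,C_n=C'$ with a rational point $\alpha_k\in C_{k-1}\cap C_k\cap\widehat{K}$ at each joint. The point $\alpha_k$ lies on both $C_{k-1}$ and $C_k$, so the coset of $[(\alpha_k,1)]$ in the quotient group (class group modulo $G$) computed via $C_{k-1}$ equals the coset computed via $C_k$. Applying the single-circle invariance from the previous step to each $C_k$ in turn, the cosets of $[(\alpha_1,1)],[(\alpha_2,1)],\dots,[(\alpha_n,1)]$ in the class group modulo $G$ are all equal. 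Since any covered ideal class of the subset arises as $[(\alpha,1)]$ for some rational point $\alpha$ lying on one of the circles (and hence on a circle reachable in the chain), every covered ideal class lands in this one common coset of $G$, which is exactly the claim.

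\medskip

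The step I expect to be the main obstacle is verifying cleanly that $(\det M)/(M)^2$ is genuinely integral of norm $D$, so that its divisors $\fd,\fd'$ have all prime factors containing $D$ and hence represent classes in $G$. The norm identity $\|(\det M)/(M)^2\|=|\det M|^2/\|(M)\|^2=D$ follows from the defining condition $|\det M|/\|(M)\|=\sqrt{D}$, but I would want to confirm integrality of the ideal quotient (not merely its norm) using Lemma \ref{lem:coprime}, which sandwiches the relevant generated ideals between $(\lambda,\mu)(\det M)/(M)$ and $(\lambda,\mu)(M)$; this containment is precisely what guarantees $\fd=(\alpha p+\gamma q,\beta p+\delta q)/(M)\subseteq\cO$ with $(\det M)/(M)^2\subseteq\fd$, pinning down the prime factors of $\fd$. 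Once integrality is secured, the rest is bookkeeping in the quotient group.
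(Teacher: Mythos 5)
Your proposal is correct and follows essentially the same route as the paper: dispose of the $H(D,\Delta)=-1$ case via Lemma \ref{lem:split}, write each circle as $M\widehat{\bR}$ with $(\det M)/(M)^2$ integral of norm $D$ via Theorem \ref{thm:redef}, apply Lemma \ref{lem:classes} to see that all rational points on one circle give the same coset of the subgroup in question, and then induct along the chain. Your extra care about the integrality of $(\det M)/(M)^2$ is a sound expansion of a step the paper leaves implicit, not a departure from its argument.
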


\begin{proof}If $H(D,\Delta)=-1$ then $\cS_D$ covers no points of $\widehat{K}$ by Lemma \ref{lem:split}. The claim holds vacuously in this case. 

If $H(D,\Delta)=1$, then Theorem \ref{thm:redef} says every circle in $\cS_D$ is of the form $M\widehat{\bR}$, where $M\in\text{PGL}_2(K)$ and $(\det M)/(M)^2$ has norm $D$. By Lemma \ref{lem:classes}, the ideal classes covered by such a circle are contained in a single coset of the subgroup generated by ideal classes of primes containing $D$. So our claim follows by induction on $n$ from Definition \ref{def:ratcon}.\end{proof}

\begin{proposition}\label{prop:DDelta}The ideal classes covered by $\cS_D$ are contained in a single coset of the subgroup generated by ideal classes of primes containing $D\Delta$.\end{proposition}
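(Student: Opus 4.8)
The plan is to bypass any connectivity of $\cS_D$ and argue entirely through the covering criterion of Corollary \ref{cor:classes}, turning the statement into a computation in $\mathrm{Cl}(K)$. First I would dispose of the trivial case: if $H(D,\Delta)=-1$, then by Lemma \ref{lem:split} no circle of $\cS_D$ meets $\widehat{K}$, so no class is covered and the claim holds vacuously. Assume from now on that $H(D,\Delta)=1$. Write $H_D$ for the subgroup of $\mathrm{Cl}(K)$ generated by the classes of primes dividing $D$, and recall the genus-theoretic fact already used in the introduction that the primes dividing $\Delta$ generate the full $2$-torsion subgroup $\mathrm{Cl}(K)[2]$. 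Thus the subgroup $H$ appearing in the statement is $H=H_D\cdot \mathrm{Cl}(K)[2]$, and it suffices to show that any two covered classes differ by an element of $H$.

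Take covered points $\alpha,\alpha'\in K$ and set $x=[(\alpha,1)]$ and $x'=[(\alpha',1)]$. By Corollary \ref{cor:classes} there are integral ideals $\fD,\fD'$ of norm $D$ with $x^2=[\fD]$ and $x'^2=[\fD']$. Using that complex conjugation inverts ideal classes, so that $[\fD']^{-1}=[\overline{\fD'}]$, I would compute
\[
  (x/x')^2=[\fD]\,[\overline{\fD'}]=[\fD\,\overline{\fD'}],
\]
which is supported on primes dividing $D$ and hence a priori only an element of $H_D$. The one place the argument has genuine content is the claim that $[\fD\,\overline{\fD'}]$ is in fact a \emph{square} in $H_D$.

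To establish this I would factor prime-by-prime, using that $\fD$ and $\fD'$ both have norm $D$. At an inert or ramified prime $p\mid D$, the norm constraint forces the $p$-part of $\fD\,\overline{\fD'}$ to be a power of the principal ideal $(p)$, contributing trivially. At a split prime $p=\fp\overline{\fp}$, write the $p$-parts of $\fD$ and $\fD'$ as $\fp^{\,b}\overline{\fp}^{\,c}$ and $\fp^{\,b'}\overline{\fp}^{\,c'}$ with $b+c=b'+c'=\ord_p D$; then the $p$-part of $\fD\,\overline{\fD'}$ is $\fp^{\,b+c'}\overline{\fp}^{\,c+b'}$, whose class is $[\fp]^{(b+c')-(c+b')}=[\fp]^{2(b-b')}$ after applying $[\overline{\fp}]=[\fp]^{-1}$ and the two norm relations. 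Every resulting exponent is even, so $[\fD\,\overline{\fD'}]=[\fa]^2$ with $\fa=\prod_{p\mid D\ \mathrm{split}}\fp^{\,b-b'}$ and $[\fa]\in H_D$.

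Finally, from $(x/x')^2=[\fa]^2$ I obtain $\bigl(x/x'\,[\fa]^{-1}\bigr)^2=1$, so $x/x'\,[\fa]^{-1}\in\mathrm{Cl}(K)[2]$; since $[\fa]\in H_D$ and $\mathrm{Cl}(K)[2]$ is generated by primes dividing $\Delta$, both factors lie in $H$ and hence $x/x'\in H$. As $\alpha,\alpha'$ were arbitrary covered points, all covered classes lie in one coset of $H$. I expect the only real obstacle to be the square-in-$H_D$ observation: the naive containment $[\fD\,\overline{\fD'}]\in H_D$ is too weak—on its own it would place $x/x'$ only in the preimage of the $2$-torsion of $\mathrm{Cl}(K)/H_D$, which can be strictly larger than $H/H_D$—and it is precisely the matching-norm evenness at the split primes that upgrades this to $H_D^2$ and so accounts for the extra factor of $\Delta$ in the statement.
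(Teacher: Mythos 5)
Your proof is correct, and its arithmetic core is the same as the paper's: both arguments come down to the observation that two integral ideals of equal norm $D$ differ, in the class group, by a \emph{square} of a class supported on primes dividing $D$, after which genus theory (ramified primes generate the $2$-torsion) absorbs the remaining ambiguity into the primes dividing $\Delta$. The difference is the entry point. The paper works at the level of circles: it writes each $C\in\cS_D$ as $M\widehat{\bR}$ via Theorem \ref{thm:redef}, uses Lemma \ref{lem:classes} to place the classes covered by one circle in $[(M)]\Gamma_D$, and then compares $[(M)]$ to $[(M_0)]$ through the ideals $\fD=(\det M)/(M)^2$ and $\fD_0=(\det M_0)/(M_0)^2$; the key step is producing an integral $\fa\mid\fD_0$ with $\overline{\fa}\fD_0=\fa\fD$, so that $[\fD_0/\fD]=[\fa]^2$. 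You instead compare covered points directly through Corollary \ref{cor:classes}, obtaining $x^2=[\fD]$ and $x'^2=[\fD']$ and showing $[\fD\,\overline{\fD'}]$ is a square in $H_D$ by the prime-by-prime evenness computation $(b+c')-(c+b')=2(b-b')$ at split primes. That computation is exactly the explicit version of the paper's existence claim for $\fa$, so the two proofs are interchangeable; yours is slightly more self-contained once Corollary \ref{cor:classes} is in hand (no need to reinvoke Theorem \ref{thm:redef} or Lemma \ref{lem:classes}), while the paper's version additionally records that the classes $[(M)]$ attached to the circles themselves all lie in a single coset, which is the form of the statement it reuses in its connectivity discussion. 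Your closing remark correctly identifies the one point of genuine content: the naive containment $[\fD\,\overline{\fD'}]\in H_D$ alone would not suffice, and it is the squareness that limits the extra slack to the $2$-torsion, i.e.\ to the primes dividing $\Delta$.
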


\begin{proof}

As in the previous proof, we are done unless $H(D,\Delta)=1$. So fix $C_0\in\cS_D$ and use Theorem \ref{thm:redef} to write $C_0=M_0\widehat{\bR}$ as usual. Let $\Gamma_{\Delta}$ and $\Gamma_{D\Delta}$ denote the subgroups generated by ideal classes of primes containing $\Delta$ and $D\Delta$. We claim every ideal class covered by $\cS_D$ is in $[(M_0)]\Gamma_{D\Delta}$. 

Let $C\in\cS_D$ be arbitrary and write $C=M\widehat{\bR}$. All ideal classes covered by $C$ are in $[(M)]\Gamma_D$ by Lemma \ref{lem:classes}, so we will be done if we can show $[(M)]\in [(M_0)]\Gamma_{D\Delta}$. Let $\fD_0=(\det M_0)/(M_0)^2$ and $\fD=(\det M)/(M)^2$. There are at most two ideals in $\cO$ above each prime in $\bZ$---a prime ideal and its conjugate---so $\|\fD_0\|=\|\fD\|$ implies $\fD_0$ and $\fD$ have the same set of prime ideal divisors up to conjugation. In particular, there is some integral ideal $\fa\,|\,\fD_0$ satisfying $\overline{\fa}\fD_0=\fa\fD$. Now divide both sides of $[(\det M)]=[(\det M_0)]$ by $[\fD]$ to get $[(M)]^2=[(M_0)^2\fD_0/\fD]=[(M_0)^2\fa/\overline{\fa}]=[(M_0)\fa]^2$. Thus $[(M)]$ and $[(M_0)\fa]$ are in the same coset of the 2-torsion subgroup, which is exactly $\Gamma_{\Delta}$. Finally, $\|\fa\|\,|\,D$ gives $[(M)]\in[(M_0\fa)]\Gamma_{\Delta}\subseteq[(M_0)]\Gamma_{D\Delta}$.\end{proof}

We want to say something substantive about the size of the cosets in the two propositions above, but rational connectivity is not enough by itself. Let us show why with an example.

Recall from Figure \ref{fig:5} that $\cS_{39}$ for $\bQ(i\sqrt{39})$ only covers one coset (out of two in this case) of the 2-torsion subgroup of the class group---the bare minimum for any $\cS_D$ when $H(D,\Delta)=1$ by Corollary \ref{cor:classes}. Nevertheless, it appears that every circle is tangent to a larger one, and such intersections are in $\widehat{K}$ by Proposition \ref{prop:discrete}. 

To prove that $\cS_{39}$ is indeed rationally connected, we will show that each $C_0$ is tangent to some $C_1$ such that $\max\|(\alpha,1)\|$ among $\alpha\in C_1\cap K$ is strictly larger than among $\alpha\in C_0\cap K$ (provided the latter is less than $1$). This creates a chain $C_0,C_1,...,C_n$, where $C_n$ contains some $\alpha$ with $\|(\alpha,1)\|=1$ (meaning $\alpha\in\cO$). By Theorem \ref{thm:angles}, a family at such an $\alpha$ contains a circle of curvature $0$. All circles of curvature $0$ meet at the point at infinity, thereby proving rational connectivity. 

Given some $C_0\in\cS_{39}$ with nonzero curvature, let $\alpha\in C_0\cap K$ be such that $\|(\alpha,1)\|$ is maximal among such points. Using Lemma \ref{lem:matform} we may write $C_0=M\widehat{\bR}$ for some $M\in\text{PGL}_2(\bQ(i\sqrt{39}))$ with left-column entries $\alpha$ and $1$, $(M)=(\alpha,1)$, and $|\det M|/\|(M)\|=\sqrt{39}$. By right multiplying $M$ by the appropriate upper-triangular matrix in $\text{PSL}_2(\bZ)$, which does not affect $(M)=(\alpha,1)$ by Lemma \ref{lem:coprime}, we may further assume that its right-column entries, say $\gamma$ and $\delta$, are in $(M)(i\sqrt{39})$.

For each $\lambda\in\cO$ there is a circle tangent to $C_0$: $$C_{\lambda}=M\begin{bmatrix}1 & \lambda\\ 0 & 1\end{bmatrix}\widehat{\bR}.$$ Note that $M(\lambda/1)=(\alpha\lambda+\gamma 1)/(1\lambda+\delta 1)\in C_{\lambda}$. Also, by Lemma \ref{lem:coprime}, $(\lambda,1)=\cO$ implies $(\alpha\lambda+\gamma,\lambda+\delta)\supseteq(\det M)/(M) = (\alpha,1)(i\sqrt{39})$. But $\gamma,\delta\in(\alpha,1)(i\sqrt{39})$, which justifies the second equality below: \begin{equation}\label{eq:10}\left\|\left(\frac{\alpha\lambda+\gamma}{\lambda+\delta},1\right)\right\|=\frac{\|(\alpha\lambda+\gamma,\lambda+\delta)\|}{|\lambda+\delta|^2}=\frac{\|(\alpha,1)(\lambda,i\sqrt{39})\|}{|\lambda+\delta|^2}.\end{equation} The norm above exceeds $\|(\alpha,1)\|$ if and only if $|\lambda+\delta|^2<\|(\lambda,i\sqrt{39})\|$. That is, we win with $C_1=C_{\lambda}$ if $-\delta$ is contained in the open disc centered on $\lambda\in\cO$ with radius squared $\|(\lambda,i\sqrt{39})\|$. Such discs cover the plane!..almost.

Our open discs leave holes (akin to ``singular points" on the Ford domain for $\text{PSL}_2(\cO)$ \cite{swan}), three of which can be seen along the vertical centerline of Figure \ref{fig:6}. This image shows a fundamental region for $(i\sqrt{39})$. The holes occur at the two cosets of $(i\sqrt{39})$ represented by $(39\pm i\sqrt{39})/4$. We claim $-\delta$ cannot be such a point, which would complete the argument that $\cS_{39}$ is rationally connected. Note that $((39\pm i\sqrt{39})/2,2)$ is a prime over $2$, which is not in a 2-torsion ideal class. On the other hand, $\delta$ lies on the image of $\widehat{\bR}$ under the transpose of $M$. This is a circle in $\cS_D$, implying $(\delta,1)=(-\delta,1)$ is in a $2$-torsion ideal class.

\begin{figure}[t]
    \centering
    \includegraphics[width=0.79\textwidth,clip,trim=0cm 5.4cm 0cm 5.5cm]{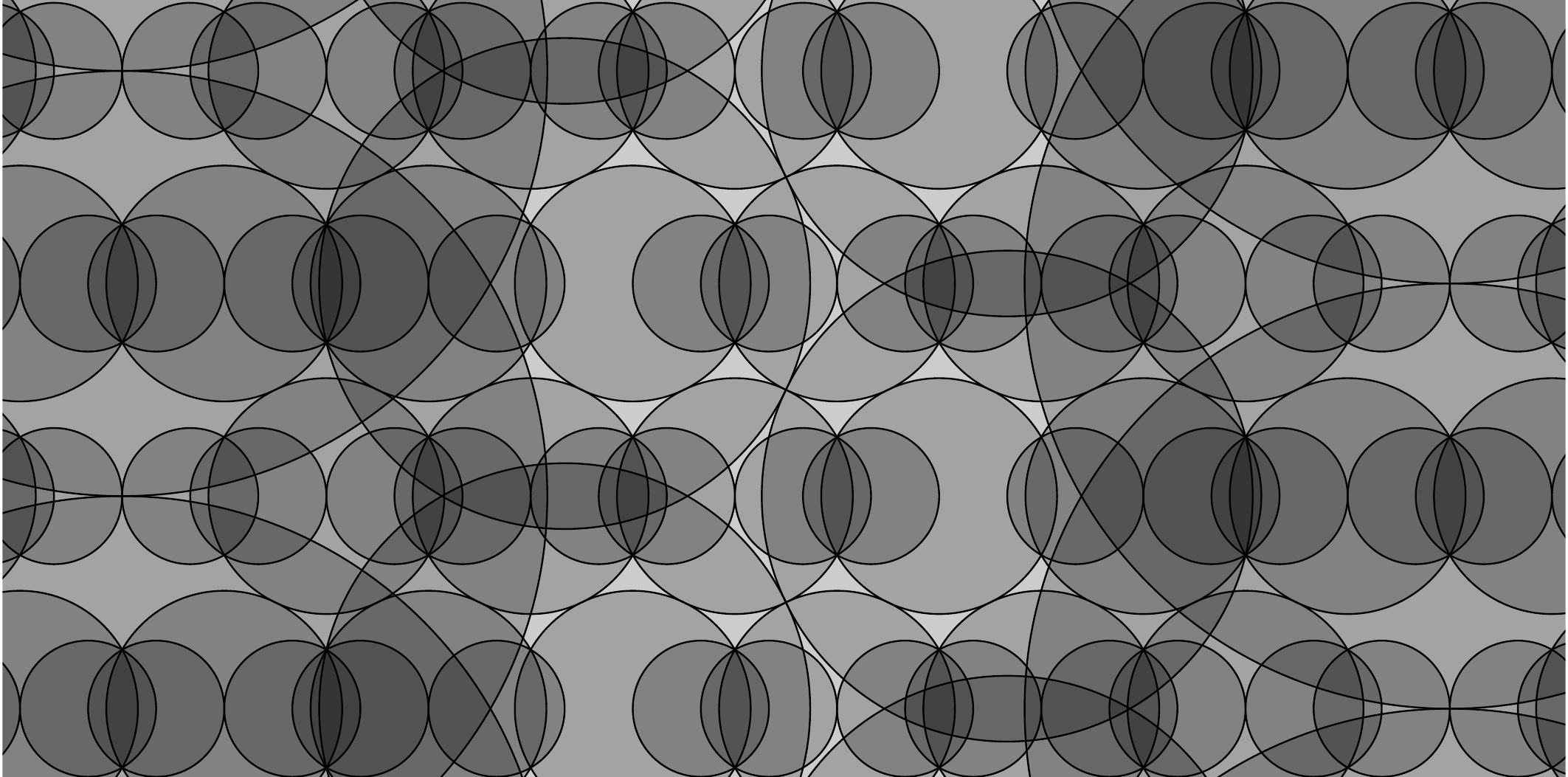}
    \captionsetup{width=0.79\textwidth}
    \caption{Discs centered on $\lambda\in\cO$ of radius squared $\|(\lambda,i\sqrt{39})\|$.}\label{fig:6}
\end{figure}

Although $-\delta$ cannot actually land on a hole, it can come arbitrarily close. Thus the ratio of $\|(\alpha,1)\|$ to the norm in (\ref{eq:10}) can be arbitrarily close to $1$, making for a small gain with our step from $C_0$ to $C_1=C_{\lambda}$. The problem is compounded by the fact that $C_1$ will experience the same issue. The value of ``$-\delta$" will slowly inch away from the nearby hole as we create the chain $C_0,C_1,...$. So while $\cS_{39}$ may be rationally connected, there is no upper bound on the chain length required to make significant progress toward reaching a circle of curvature $0$. Such is the insufficiency of rational connectivity, remedied below.

\begin{definition}\label{def:fincon}An arrangement $\cA$ is \emph{finitely connected} if there is $n\in\bN$ such that any $C\in\cA$ with nonzero curvature $r$ has a chain $C=C_0,C_1,...,C_n\in\cA$ for which $C_{k-1}\cap C_k\neq\emptyset$ for all $k$ and $C_n$ has curvature magnitude at most $|r|/2$.\end{definition}

The $C_k$'s need not be distinct, meaning the chain can have length less than $n$. Also remark that we do not require $C_{k-1}\cap C_k\subset\widehat{K}$.

The proof that $\cS_{4}$ for $\bQ(i\sqrt{39})$ in Figure \ref{fig:3} is finitely (and rationally) connected proceeds by finding disc covers of $\bC$ as in Figure \ref{fig:6}. In this case, however, three disc covers must be computed since there are three integral ideals of norm $4$. (For the previous example, there is only one integral ideal of norm $39$.) A nearly identical scenario is worked out in detail ($\cS_4$ for $\bQ(i\sqrt{31})$) in the author's dissertation \cite{martin}.

\begin{theorem}\label{thm:fincon}If $\cS_D$ is finitely connected, then it covers all of $\widehat{K}$. In particular, the class group is generated by ideal classes of primes with norm dividing $D\Delta$, and every ideal class in the principal genus contains an integral ideal of norm $D$.\end{theorem}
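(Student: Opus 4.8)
The plan is to separate the statement into its geometric core---that $\cS_D$ covers every point of $\widehat{K}$---and its two number-theoretic corollaries, which will follow formally from coverage together with Corollary \ref{cor:classes} and Proposition \ref{prop:DDelta}. Throughout I use that the curvature of any $C\in\cS_D$ lies in the discrete set $\sqrt{|\Delta|/D}\,\bZ$, since $i\sqrt{D}r\in\sqrt{\Delta}\bZ$ forces $r=\sqrt{|\Delta|/D}\,m$ for some $m\in\bZ$; in particular the smallest attainable nonzero curvature magnitude is $\sqrt{|\Delta|/D}$, realized by the circle exhibited in the proof of Proposition \ref{prop:nonempty}.

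First I would produce a circle of curvature $0$. Starting from the minimal-curvature circle and applying Definition \ref{def:fincon} repeatedly, I obtain after each block of at most $n$ steps a circle whose curvature magnitude has dropped by a factor of $2$. Because the curvatures live in $\sqrt{|\Delta|/D}\,\bZ$, once the magnitude falls below $\sqrt{|\Delta|/D}$ it must be exactly $0$; hence $\cS_D$ contains a line. Every line passes through $1/0\in\widehat{K}$, so $\cS_D$ covers $\infty$, and Lemma \ref{lem:split} gives $H(D,\Delta)=1$; equivalently the principal class $[\cO]$ is covered. The same descent applies to an arbitrary $C\in\cS_D$ of nonzero curvature, so every circle is joined by a chain to a line, and since all lines meet at $\infty$ the whole arrangement is chain-connected.

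The main step, and the principal obstacle, is upgrading chain-connectedness to coverage of every ideal class. By the $\text{PSL}_2(\cO)$-symmetry of $\cS_D$ (Proposition \ref{prop:symmetry}) it suffices to cover one representative $\alpha$ of each class, and by Corollary \ref{cor:classes} this is exactly the assertion that every class in the principal genus contains an integral ideal of norm $D$. Here finite connectivity must be used in an essential way: the arrangement $\cS_{39}$ for $\bQ(i\sqrt{39})$ is rationally connected and contains lines yet covers only the $2$-torsion subgroup, precisely because the chains needed there grow without bound. My plan is to run the curvature-halving descent starting from circles of large curvature that pass within $\eps$ of a putative uncovered $\alpha$ (such circles exist because $\cS_D$ is dense in $\widehat{\bC}$), and to track the distance to $\alpha$ along the chain using Proposition \ref{prop:holes}: circles of curvature below $1/d$, where $d=\sqrt{|\Delta|}\,\|(\alpha,1)\|/\sqrt{D}$, are forced to stay distance at least $(\sqrt{2}-1)d$ from $\alpha$, while consecutive intersecting circles can only move their centers by the sum of their radii. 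The uniform bound $n$ is what rules out the ``slow escape'' pathology of $\cS_{39}$, in which a chain lingers near an uncovered point arbitrarily long; making this quantitative control close against the repulsion estimate is the crux of the argument, since for large $n$ the admissible drift is a priori comparable to $(\sqrt{2}-1)d$. I also expect the consecutive intersection points of the chain, which may be irrational, to enter only through the symmetry $\theta\mapsto-\theta$ exchanging the two intersection points, so that only rationally-defined symmetric data is constrained; this is the likely source of the $2$-torsion ($\Delta$) slack and explains why coverage is obtained only up to the subgroup generated by primes dividing $\Delta$.

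Granting coverage of all of $\widehat{K}$, the two corollaries are immediate. The statement that every principal-genus class contains an integral ideal of norm $D$ is just the translation of coverage through Corollary \ref{cor:classes}. For the generation claim, Proposition \ref{prop:DDelta} places all covered classes in a single coset of the subgroup $\Gamma_{D\Delta}$ generated by classes of primes of norm dividing $D\Delta$; since $[\cO]$ is covered this coset is $\Gamma_{D\Delta}$ itself, and since coverage is now total we conclude $\Gamma_{D\Delta}=\text{Cl}(K)$, that is, the class group is generated by ideal classes of primes of norm dividing $D\Delta$.
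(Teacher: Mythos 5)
Your overall strategy is the paper's: descend via finite connectivity from a high-curvature circle near a target $\alpha\in K$, bound the drift of the chain by the diameters of the intermediate circles, and play this against the repulsion estimate of Proposition \ref{prop:holes} to force $\alpha$ onto the terminal circle. The reduction of the two number-theoretic claims to coverage via Corollary \ref{cor:classes} and Proposition \ref{prop:DDelta} is also exactly right, and your preliminary observation that discreteness of curvatures in $\sqrt{|\Delta|/D}\,\bZ$ plus one halving step produces a line is correct (though not needed). But the crux, which you explicitly leave open (``making this quantitative control close against the repulsion estimate is the crux''), is a genuine gap, and your stated worry---that for large $n$ the admissible drift is a priori comparable to $(\sqrt{2}-1)d$---shows the missing idea. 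In particular, density of $\cS_D$ alone does not supply the needed quantitative coupling between the curvature of the starting circle and its distance to $\alpha$.

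The paper closes the gap by conjugating a fixed circle $C$ by $M_0\in\text{PSL}_2(\cO)$ whose left column $(\alpha_0,\beta_0)$ satisfies $\beta_0\alpha-\alpha_0=1$ and whose entry $\delta_0$ is reduced mod $\beta_0$. This yields a starting circle $C_0=M_0C$ of curvature at most $c|\beta_0|^2$ lying within $c'/|\beta_0|$ of $\alpha$. One then takes a \emph{minimal-length} chain reducing the curvature by a factor $2^m$: by minimality every intermediate circle still has curvature at least $c|\beta_0|^2/2^m$, hence diameter $O(2^m/|\beta_0|^2)$, so the total drift over the at most $mn$ steps is $O(2^mmn/|\beta_0|^2)$---a full order of magnitude in $|\beta_0|$ smaller than the repulsion radius $(\sqrt{2}-1)\sqrt{2^md/(c|\beta_0|^2)}$ of the terminal circle, which decays only like $1/|\beta_0|$. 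Choosing $m$ first so that $\sqrt{2^m}$ beats the constant $c'$ in the initial offset, and then $|\beta_0|$ large, makes (initial offset) plus (drift) strictly smaller than the repulsion radius, whence $\alpha$ lies on the terminal circle by Proposition \ref{prop:holes}. Without this two-parameter tuning and the minimality trick controlling intermediate diameters, the drift really can be comparable to the repulsion radius, as your $\cS_{39}$ discussion anticipates. Your closing speculation that the $\theta\mapsto-\theta$ symmetry of intersection points is the source of the $\Delta$-slack is also off target: that slack comes entirely from Proposition \ref{prop:DDelta} (the $2$-torsion ambiguity among the ideals $(M)$ for the various presentations $C=M\widehat{\bR}$), which you correctly invoke anyway.
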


\begin{proof}Suppose $\cS_D$ is finitely connected with chain lengths $n$ as in Definition \ref{def:fincon}. We will show that an arbitrary $\alpha\in K$ is covered by $\cS_D$. Fix any $C\in\cS_D$ and $\tau\in C$ (in $K$ or not) which is not the point at infinity. Let $\bv_{C}=[\zeta\;\,\overline{\zeta}\;\,r\;\,\hat{r}]$.

Since $1\in (\alpha,1)$, there are infinitely many choices of coprime $\alpha_0,\beta_0\in\cO$ satisfying $\beta_0\alpha-\alpha_01=1$. Given such a pair, let $M_0\in\text{PSL}_2(\cO)$ be a matrix with left column entries $\alpha_0$ and $\beta_0$. Call its right column entries $\gamma_0$ and $\delta_0$, and assume $\delta_0$ has minimal magnitude in its coset $\text{mod}\,\beta_0$ (achieved via right multiplication by the appropriate upper triangular matrix). By Proposition \ref{prop:exiso}, the curvature of $M_0C$ is $2\Re(\zeta\beta_0\overline{\delta_0})+|\beta_0|^2\hat{r}+|\delta_0|^2r$. Our choice to make $|\delta_0|$ minimal bounds this curvature in magnitude from above by $c|\beta_0|^2$ for some constant $c$ independent of $\beta_0$. Also, the distance between $M_0C$ and $\alpha$ is at most $|M_0(\tau)-\alpha|=$ $$\left|\frac{\alpha_0\tau + \gamma_0}{\beta_0\tau + \delta_0}-\alpha\right|\leq\left|\frac{\alpha_0\tau + \gamma_0}{\beta_0\tau + \delta_0}-\frac{\alpha_0}{\beta_0}\right|+\left|\frac{\alpha_0}{\beta_0}-\alpha\right|=\frac{1}{|\beta_0(\beta_0\tau+\delta_0)|}+\frac{1}{|\beta_0|},$$ which is bounded above by $c'/|\beta_0|$ for some constant $c'$ independent of $\beta_0$.

Set $d=\sqrt{|\Delta|}\|(\alpha,1)\|/\sqrt{D}$, just as in Proposition \ref{prop:holes}. First fix $m\in\bN$ large enough so that \begin{equation}\label{eq:11}c'< \frac{(\sqrt{2}-1)\sqrt{2^md}}{2\sqrt{c}}.\end{equation} Then fix $M_0$ as above with $\beta_0$ large enough so that \begin{equation}\label{eq:12}\frac{2^{m+1}mn}{c|\beta_0|}< \frac{(\sqrt{2}-1)\sqrt{2^md}}{2\sqrt{c}}\hspace{0.5cm}\text{and}\hspace{0.5cm}\frac{2^m}{c|\beta_0|^2}< d.\end{equation} Set $C_0=M_0C$ and let $r_0$ be its curvature. Take a minimal-length chain of distinct circles $C_0,C_1,...,C_j\in\cS_D$ such that $|r_j|$, the curvature magnitude of $C_j$, is at most $c|\beta_0|^2/2^m$. Since $c$ is such that $|r_0|\leq c|\beta_0|^2$, we have $j\leq mn$ by finite connectivity. The diameter of $C_k$ if $0\leq k<j$ (if there is such an index) is less than $2(2^m/c|\beta_0|^2)$ by minimality of $j$. So the distance between $C_j$ and $\alpha$ is at most $$|M_0(\tau)-\alpha|+\frac{2^{m+1}j}{c|\beta_0|^2}\leq\frac{c'}{|\beta_0|}+\frac{2^{m+1}mn}{c|\beta_0|^2}<$$ $$(\sqrt{2}-1)\sqrt{\frac{2^md}{c|\beta_0|^2}}\leq (\sqrt{2}-1)\min\!\left(d,\sqrt{\frac{d}{|r_j|}}\right).$$ The middle inequality above uses (\ref{eq:11}) to bound its first summand and the first half of (\ref{eq:12}) to bound its second summand. The last inequality above uses $|r_j|\leq c|\beta_0|^2/2^m$ and the second half of (\ref{eq:12}). Thus $\alpha\in C_j$ by Proposition \ref{prop:holes}.

The final two claims of the theorem are Proposition \ref{prop:DDelta} and Corollary \ref{cor:classes}.\end{proof}

Besides Theorem \ref{thm:fincon}, our number theory results had to assume that $\cS_D$ has intersections in $\widehat{K}$. It would be interesting to find a criterion that guarantees finite connectivity and takes advantage of Definition \ref{def:fincon} not requiring $C_{i-1}\cap C_i\subset\widehat{K}$. The only way to prove finite connectivity of which the author is currently aware is to verify the covering property that $\cS_{39}$ for $\bQ(i\sqrt{39})$ narrowly failed in Figure \ref{fig:6}. But this property may be stronger than necessary as it also guarantees rational connectivity, which strengthens the conclusion of Theorem \ref{thm:fincon} as follows. 

\begin{corollary}\label{cor:finrat}If $\cS_D$ is finitely and rationally connected, then the class group is generated by ideal classes of primes with norm dividing $D$.\end{corollary}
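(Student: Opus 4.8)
The plan is to chain together the two covering results already established, using finite connectivity to guarantee that \emph{everything} is covered and rational connectivity to pin down \emph{how} the covered classes sit inside the class group.

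First I would invoke finite connectivity. By Theorem \ref{thm:fincon}, a finitely connected $\cS_D$ covers all of $\widehat{K}$. Since every ideal class of $K$ has a representative of the form $(\alpha,1)$ with $\alpha\in K$---take any two generators of a fractional ideal and scale one of them to $1$---covering all points of $\widehat{K}$ means that $\cS_D$ covers every ideal class of $K$.

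Next I would feed rational connectivity into Proposition \ref{prop:ratcon}. Taking $\cA=\cS_D$, which is a rationally connected subset of itself by hypothesis, the proposition says the ideal classes covered by $\cS_D$ all lie in a single coset of the subgroup $\Gamma_D$ generated by ideal classes of primes containing $D$. Combining this with the previous paragraph, the \emph{entire} class group lies in one coset of $\Gamma_D$. Because the trivial class $[\cO]$ is among those covered, the relevant coset is $\Gamma_D$ itself, which forces $\Gamma_D$ to be the whole class group.

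The last step is to match $\Gamma_D$ against the statement's phrasing. I would note that an inert prime $\fp$ over a rational prime $p$ satisfies $(p)=\fp$, so its class is trivial and it contributes nothing to $\Gamma_D$; hence $\Gamma_D$ is equally generated by the classes of split and ramified primes lying over rational primes dividing $D$, which are exactly the primes of norm dividing $D$. I do not anticipate a genuine obstacle here, since the real work lives in Theorem \ref{thm:fincon} and Proposition \ref{prop:ratcon}. The only point needing care is this bookkeeping between ``containing $D$'' and ``norm dividing $D$,'' which reduces to the observation that inert primes are principal.
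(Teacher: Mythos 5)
Your proposal is correct and follows exactly the paper's (one-line) proof: combine Theorem \ref{thm:fincon} (finite connectivity covers all of $\widehat{K}$, hence every ideal class) with Proposition \ref{prop:ratcon} (rational connectivity confines covered classes to a single coset of the subgroup generated by classes of primes containing $D$), so that coset is the whole group and contains $[\cO]$, forcing the subgroup to be everything. Your closing remark reconciling ``primes containing $D$'' with ``primes of norm dividing $D$'' via the triviality of inert prime classes is a reasonable bookkeeping point that the paper leaves implicit.
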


\begin{proof}We are combining Proposition \ref{prop:ratcon} with Theorem \ref{thm:fincon}.\end{proof}

\bibliographystyle{plain}
\bibliography{refs}

\end{document}